\DeclareMathOperator*{\tr}{tr}
\DeclareMathOperator*{\argmax}{arg\,max}
\begin{document}

\title{Monotone Mixed Finite Difference Scheme for Monge-Amp\`ere Equation
}


\author{
Yangang Chen
\and
Justin W. L. Wan
\and
Jessey Lin
}


\institute{Yangang Chen \at
              Department of Applied Mathematics, University of Waterloo, 200 University Avenue West, Waterloo, ON, N2L 3G1, Canada. \\
              \email{y493chen@uwaterloo.ca}           
           \and
           Justin W. L. Wan \at
              David R. Cheriton School of Computer Science, University of Waterloo, 200 University Avenue West, Waterloo, ON, N2L 3G1, Canada.
           \and
           Jessey Lin \at
              Centre for Computational Mathematics in Industry and Commerce, University of Waterloo, 200 University Avenue West, Waterloo, ON, N2L 3G1, Canada.
}

\date{Received: date / Accepted: date}

\maketitle

\begin{abstract}
In this paper, we propose a monotone mixed finite difference scheme for solving the two-dimensional Monge-Amp\`ere equation.
In order to accomplish this, we convert the Monge-Amp\`ere equation to an equivalent Hamilton-Jacobi-Bellman (HJB) equation.
Based on the HJB formulation, we apply the standard 7-point stencil discretization, which is second order accurate, to the grid points wherever monotonicity holds, and apply semi-Lagrangian wide stencil discretization elsewhere to ensure monotonicity on the entire computational domain.
By dividing the admissible control set into six regions and optimizing the sub-problem in each region, the computational cost of the optimization problem at each grid point is reduced from $O(M^2)$ to $O(1)$ when the standard 7-point stencil discretization is applied and to $O(M)$ otherwise, where the discretized control set is $M\times M$.
We prove that our numerical scheme satisfies consistency, stability, monotonicity and strong comparison principle, and hence is convergent to the viscosity solution of the Monge-Amp\`ere equation.
In the numerical results, second order convergence rate is achieved when the standard 7-point stencil discretization is applied monotonically on the entire computation domain, and up to order one convergence is achieved otherwise. The proposed mixed scheme yields a smaller discretization error and a faster convergence rate compared to the pure semi-Lagrangian wide stencil scheme.
\keywords{nonlinear elliptic partial differential equations \and Monge-Amp\`ere equations \and Hamilton-Jacobi-Bellman equations \and viscosity solutions \and finite difference methods \and monotone schemes \and mixed schemes}
\end{abstract}




\section{Introduction}
\label{sec:intro}



The goal of this paper is to compute the numerical solution of the two-dimensional Monge-Amp\`ere equation with Dirichlet boundary condition:
\begin{equation}
\label{eq:MAE}
\renewcommand*{\arraystretch}{1.2}
\begin{array}{rll}
u_{xx} u_{yy} - u_{xy}^2
&
= f,
&
\text{ in } \Omega,
\\
u
&
= g, 
&
\text{ on } \partial\Omega,
\\
u
&
\text{is convex},
&
\end{array}
\end{equation}
where $\Omega$ is a bounded convex open set in $\mathbb{R}^2$, $\partial\Omega$ is its boundary, $\overline{\Omega}=\Omega\cup\partial\Omega$, $u: \overline{\Omega} \to \mathbb{R}$ is the unknown function, and $f: \Omega \to \mathbb{R}$ and $g: \partial\Omega \to \mathbb{R}$ are given functions.


The Monge-Amp\`ere equation is of great interest due to a wide range of applications, including differential geometry, optimal mass transport (or Monge-Kantorovich) problem, image registration, mesh generation, etc. We direct the interested readers to \cite{caffarelli1999monge} for an extensive review of applications.


The Monge-Amp\`ere equation is a fully nonlinear partial differential equation (PDE), since the left hand side consists of products of the second derivatives. As a result, it may have multiple weak solutions. Among all these weak solutions, we are interested in computing the viscosity solution \cite{crandall1983viscosity,crandall1992user}, since it is often considered the correct one in many practical applications \cite{froese2011convergent}. The viscosity solution of the Monge-Amp\`ere equation is globally convex, while the other solutions may not be convex \cite{froese2011convergent}. We note that a convexity constraint is imposed in the Dirichlet problem (\ref{eq:MAE}) in order to select the viscosity solution and circumvent the issue of multiple weak solutions.



Due to the nonlinearity of the Monge-Amp\`ere equation (\ref{eq:MAE}) with the additional convexity constraint, it is challenging to design a numerical scheme that converges to the viscosity solution. Some numerical schemes have been proposed in recent years. One approach is using finite difference methods. Some finite difference schemes, such as \cite{benamou2010two}, use the standard central differencing to discretize $u_{xy}$, and are thus not monotone. The significance of monotonicity is that together with consistency, stability and strong comparison principle, they provide sufficient conditions for a numerical scheme to converge to the viscosity solution \cite{barles1991convergence}.

Very few finite difference schemes that are monotone and thus convergent in the viscosity sense have been proposed. One of the schemes, proposed in \cite{oliker1989numerical}, is to exploit the geometrical interpretation of the Monge-Amp\`ere equation. The grid structure, constrained by the geometry of the equation, is usually not rectangular or triangular. Another scheme, proposed in \cite{oberman2008wide,froese2011convergent}, uses wide stencils to achieve monotonicity. However, in order for the scheme to converge, the number of the stencil points must increase towards infinity when the mesh size $h$ decreases towards 0, thus resulting in high computational costs for solving problems on fine grids. Some improvements on this wide stencil scheme have been proposed. For instance, in \cite{froese2011fast,froese2013filter}, the same authors use hybrid and filtered schemes, both integrating the wide stencil scheme with the more accurate non-monotone central difference scheme in order to improve the accuracy. That being said, 
the issue of infinite stencil points still exists.
Recently, Reference \cite{benamou2016monotone} improves on the previous wide stencil approach so that it is the least nonlocal among all wide stencils of the same family. The number of stencil points does not need to grow to infinity as $h \to 0$, but it still grows and can reach as high as 48.


Galerkin-type methods have also been developed for solving the Monge-Amp\`ere equation. An immediate challenge is that it is not obvious how to write down the variational formulation of (\ref{eq:MAE}) using the common integration-by-parts approach. The $L^2$ projection methods, proposed in \cite{bohmer2008finite,brenner2011C}, build up the Galerkin-type schemes based on the linearized Monge-Amp\`ere equation. Similar idea can be found in the nonvariational finite element method in \cite{lakkis2013FEM}. In \cite{dean2006numerical}, the authors reformulate the Monge-Amp\`ere equation into an augmented Lagrangian problem or a least-squares problem, which allows the use of mixed finite element methods. The authors in \cite{feng2009vanishing} add an artificial fourth order elliptic differential operator $\epsilon\Delta^2 u$. They show that with this additional term, a variational formulation, and thus a finite element scheme, becomes possible. However, a common issue for these Galerkin-type methods is that convergence to the viscosity solutions for non-regular solutions remains unclear.



Our approach, which is distinct from many of the existing methods, is to first convert (\ref{eq:MAE}) into an equivalent Hamilton-Jacobi-Bellman (HJB) equation \cite{krylov1972control,lions1983hamilton}, and then numerically solve the equivalent HJB equation. The application of the HJB formulation in the numerical computation of the Monge-Amp\`ere equation is first investigated by the coauthors of this paper; see the essay \cite{lin2014wide}. Another recent investigation on this approach, \cite{feng2016convergent}, is made public at the completion of our paper. There are some important benefits using the HJB formulation. One is that the differential operator of the HJB equation under fixed control parameters is linear. Another benefit is that the convexity constraint in (\ref{eq:MAE}) is already implicitly incorporated into the HJB differential operator. In other words, there is no need to impose the convexity constraint in the HJB formulation. In addition, many convergent numerical schemes for HJB equations or HJB differential operators have been developed, such as \cite{forsyth2007numerical,howard1960dynamic,debrabant2013semi,ma2014unconditionally,bokanowski2009some,azimzadeh2016weakly,wang2008maximal}. As a result, it is more tractable to design a numerical scheme that converges in the viscosity sense for the equivalent HJB equation than for the Monge-Amp\`ere equation (\ref{eq:MAE}) with the convexity constraint.

Our primary goal is to design a monotone finite difference scheme for the equivalent HJB equation. We note that the cross derivative $u_{xy}$ is still present in the HJB equation, and the standard central differencing or the standard 7-point stencil discretization for $u_{xy}$ may be non-monotone. In order to achieve monotonicity, Reference \cite{feng2016convergent} follows the idea in \cite{debrabant2013semi,ma2014unconditionally} and applies ``semi-Lagrangian scheme" on the entire computational domain, where a local coordinate rotation is performed to remove the cross derivative from the HJB equation, and then central differencing is applied with a stencil length greater than the mesh size $h$, resulting in {\it at most 17 stencil points for any $h$}. In some literature, such semi-Lagrangian scheme is also called wide stencil scheme, which should not to be confused with the wide stencil scheme in \cite{oberman2008wide,froese2011convergent,froese2011fast,froese2013filter} that requires infinity stencil size as $h\to 0$. However, monotonicity is achieved at the expense of large truncation error and slow convergence. In particular, the convergence rate is no better than $O(h)$.

In order to improve the accuracy and meanwhile {\it strictly} maintain monotonicity, our approach is to apply a mixed standard 7-point stencil and semi-Lagrangian wide stencil discretization on the equivalent HJB equation. More specifically, the standard 7-point stencil discretization, which is second order accurate, is applied to discretize $u_{xy}$ at a grid point if monotonicity is fulfilled. Otherwise, the semi-Lagrangian wide stencil scheme, which is less accurate but guaranteed to be monotone, is implemented. We emphasize that our discretization scheme is designed such that consistency, stability, monotonicity and strong comparison principle are fulfilled on the entire computational domain. As a result, our numerical scheme is guaranteed to converge to the viscosity solution of the Monge-Amp\`ere equation \cite{barles1991convergence}. Meanwhile, by maximal use of the standard 7-point stencil discretization, the discretization error of the numerical solution is significantly reduced, compared to the pure semi-Lagrangian wide stencil scheme in \cite{feng2016convergent}. Moreover, our numerical scheme yields a convergence rate of $O(h^2)$ whenever the standard 7-point stencil discretization can be applied monotonically on the entire computation domain, and up to $O(h)$ otherwise. The second order convergence rate in the optimal cases is another significant improvement over the numerical scheme in \cite{feng2016convergent}.

To solve the resulting nonlinear discretized system, one of the most expensive steps is to optimize two control parameters at every grid point. Reference \cite{feng2016convergent} does not discuss the computational cost of the optimization problem. Typically a bilinear search is implemented on an $M\times M$ discretized control set, resulting in $O(M^2)$ computational complexity. We propose an approach that reduces the computational cost for the optimization problem to $O(1)$ whenever the standard 7-point stencil discretization is applied, and at most $O(M)$ otherwise.

Finally, we want to emphasize that our method is the only method that fulfills all the following properties: monotone and thus convergent to the viscosity solution, second order accurate in the optimal cases, and having at most 17 stencil points independent of the mesh size $h$.
None of the references in our paper have the same properties.


To illustrate our numerical scheme, we will briefly review the notion of viscosity solution in Section \ref{sec:viscosity}. In Section \ref{sec:MAEtoHJB}, we will establish the equivalent HJB formulation for the Monge-Amp\`ere equation (\ref{eq:MAE}). In Section \ref{sec:discrete}, we will describe our mixed standard 7-point stencil and semi-Lagrangian wide stencil finite difference discretization for the HJB formulation. Section \ref{sec:policy} solves the nonlinear discretized system using policy iteration, with a detailed discussion on speeding up computation for the optimization of control parameters. Section \ref{sec:converge} proves that our numerical scheme satisfies consistency, stability, monotonicity and strong comparison principle, and thus converges to the viscosity solution of (\ref{eq:MAE}). Section \ref{sec:numerical} shows numerical results. We also demonstrate the discretization error and the rate of convergence for each case. Section \ref{sec:conclusion} is the conclusion.


\section{Viscosity Solution of the Monge-Amp\`ere Equation}
\label{sec:viscosity}

The objective of this paper is to compute the viscosity solution of the Monge-Amp\`ere equation (\ref{eq:MAE}). An overview on the topic of viscosity solution can be found in \cite{crandall1983viscosity,crandall1992user}.

Before defining the viscosity solution of (\ref{eq:MAE}), we rewrite (\ref{eq:MAE}) as
\begin{equation}
\label{eq:MAE2}
\renewcommand*{\arraystretch}{1.2}
\begin{array}{l}
\mathcal{F}
\left(
\mathbf{x}, u(\mathbf{x}), D^2 u(\mathbf{x})
\right)
\equiv 
\left\{
\begin{array}{ll}
-\det
\left[
D^2 u(\mathbf{x})
\right]
+ f (\mathbf{x}),
&
\mathbf{x} \in \Omega,
\\
u(\mathbf{x}) - g(\mathbf{x}),
&
\mathbf{x} \in \partial\Omega,
\end{array}
\right.
= 0,
\\
u \text{ is convex }
\quad \Rightarrow \quad 
D^2 u(\mathbf{x}) \text{ is positive semi-definite},
\end{array}
\end{equation}
where $\mathbf{x}=(x,y)\in\overline{\Omega}$, and $D^2 u$ is the Hessian matrix of $u$.

To introduce the notion of viscosity solution, we define the upper (respectively lower) semi-continuous envelope of a function $z: C \to \mathbb{R}$ on a closed set $C$ as
\begin{equation}
\label{eq:envelope}
z^*(x)
\equiv \displaystyle\limsup_{\substack{y\to x, \, y\in C}} z(y)
\quad
\left(
\;
\text{respectively }
z_*(x)
\equiv \displaystyle\liminf_{\substack{y\to x, \, y\in C}} z(y)
\;
\right).
\end{equation}

\begin{definition}[Viscosity solution]
\label{def:viscosity}
A convex upper (respectively lower) semi-continuous function $u:\overline{\Omega}\to \mathbb{R}$ is a viscosity subsolution (respectively supersolution) of the Monge-Amp\`ere equation
$\mathcal{F}
\left(
\mathbf{x}, u(\mathbf{x}), D^2 u(\mathbf{x})
\right) = 0$,
if for all the test functions $\varphi(\mathbf{x})\in C^2(\overline{\Omega})$ and all $\mathbf{x}\in\overline{\Omega}$, such that $u^*-\varphi$ (respectively $u_*-\varphi$) has a local maximum (respectively minimum) at $\mathbf{x}$, we have
\begin{equation}
\mathcal{F}_* (\mathbf{x}, u^*(\mathbf{x}), D^2\varphi(\mathbf{x})) \leq 0
\quad
\left(
\;
\text{respectively }
\mathcal{F}^* (\mathbf{x}, u_*(\mathbf{x}), D^2\varphi(\mathbf{x})) \geq 0
\;
\right).
\end{equation}
Furthermore, the function $u$ is a viscosity solution if it is both a viscosity sub-solution and super-solution.
\end{definition}

We note that the convexity of $u$ (or equivalently, $D^2 u$ being positive semi-definite, $\det(D^2 u) = f\geq 0$) already implies that the differential operator of (\ref{eq:MAE2}) is degenerate elliptic. Furthermore, degenerate ellipticity, plus $\overline{\Omega}$ being bounded and convex, ensures the existence and uniqueness of the viscosity solution of (\ref{eq:MAE2}). See \cite{crandall1992user,gutierrez2012monge} for details.


\section{HJB Formulation of the Monge-Amp\`ere Equation}
\label{sec:MAEtoHJB}


Since the Monge-Amp\`ere equation (\ref{eq:MAE2}) is nonlinear, it is challenging to design a finite difference scheme that converges to the viscosity solution. Our approach is to convert the Monge-Amp\`ere equation into an equivalent HJB equation. The equivalence of the two PDEs is first established in \cite{krylov1972control} and \cite{lions1983hamilton} for classical solutions. Recently, Reference \cite{feng2016convergent} extends the equivalence to the setting of viscosity solutions. Here we state the equivalence of the two PDEs as the following theorem:

\begin{theorem}
\label{thm:MAEtoHJB}
Let $\Omega$ be a convex open set in $\mathbb{R}^2$. Let $f\in C(\Omega)$ be a non-negative function. Let a convex function $u$ be the viscosity solution of the following HJB equation,
\begin{equation}
\label{eq:convertHJB}
\displaystyle\max_{A(\mathbf{x}) \in S_1^+} \left\{
- \tr \left[
A (\mathbf{x}) D^2 u (\mathbf{x})
\right]
+ 2 \sqrt{
\det(A(\mathbf{x})) \,
f (\mathbf{x})
}
\right\} = 0,
\end{equation}
where
$S_1^+
\equiv
\{ A\in \mathbb{R}^{2\times 2}: \, A \text{ is positive semi-definite, } A^T = A, \, \tr(A)=1 \}$
and $A(\mathbf{x})\in S_1^+$ is the control at point $\mathbf{x}$. Then $u$ is the viscosity solution of the Monge-Amp\`ere equation (\ref{eq:MAE2}).
\end{theorem}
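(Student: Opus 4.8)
The plan is to reduce the equivalence of the two PDEs to a single pointwise algebraic identity — relating the HJB operator to the convexity-constrained Monge-Amp\`ere operator evaluated on a fixed symmetric $2\times 2$ matrix — and then to propagate this identity through the definition of viscosity solution via $C^2$ test functions.

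\emph{Step 1: the pointwise identity.} Fix $\mathbf{x}$, write $f=f(\mathbf{x})\ge 0$, and let $M$ be an arbitrary symmetric $2\times 2$ matrix. Parametrize $A\in S_1^+$ by $A=\begin{pmatrix} a & b \\ b & 1-a\end{pmatrix}$ with $a\in[0,1]$, $b^2\le a(1-a)$. The map $A\mapsto -\tr(AM)+2\sqrt{\det(A)\,f}$ is concave on $S_1^+$, since $A\mapsto\sqrt{\det A}$ is concave on the cone of positive semi-definite $2\times 2$ matrices by Minkowski's determinant inequality; hence the maximum is attained and may be computed by a one-variable optimization in $b$ followed by one in $a$. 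I expect this (or, equivalently, the matrix arithmetic--geometric inequality $\tr(AM)\ge 2\sqrt{\det(A)\det(M)}$ for positive semi-definite $A,M$, with equality exactly when $A$ is proportional to the adjugate of $M$, combined with a rank-one control $A=vv^T$ to expose any negative eigenvalue of $M$) to give the closed form
\[
\max_{A\in S_1^+}\Bigl\{-\tr(AM)+2\sqrt{\det(A)\,f}\Bigr\}=\tfrac12\Bigl(-\tr M+\sqrt{(\tr M)^2-4\det M+4f}\Bigr),
\]
the radicand being $(\lambda_1-\lambda_2)^2+4f\ge 0$ for the eigenvalues $\lambda_1,\lambda_2$ of $M$. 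From this I read off $\max_{A\in S_1^+}\{\cdots\}\le 0\iff\bigl(M\ge 0\text{ and }{-\det M}+f\le 0\bigr)$ and $\max_{A\in S_1^+}\{\cdots\}\ge 0\iff\bigl(M\not\ge 0\text{ or }{-\det M}+f\ge 0\bigr)$; in particular $\max_{A\in S_1^+}\{\cdots\}=0\iff\bigl(M\ge 0\text{ and }\det M=f\bigr)$.

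\emph{Step 2: transfer to viscosity solutions.} Take $\varphi\in C^2(\overline\Omega)$ and a point $\mathbf{x}_0\in\Omega$; the case $\mathbf{x}_0\in\partial\Omega$ only involves the Dirichlet part $u-g$, which is common to both formulations. For the subsolution property, suppose $u$ is convex and $u^*-\varphi$ has a local maximum at $\mathbf{x}_0$; then $D^2\varphi(\mathbf{x}_0)\ge 0$, because a $C^2$ function that dominates a convex function with equality at a point has positive semi-definite Hessian there. By Step 1 the HJB subsolution inequality at $\mathbf{x}_0$ then holds iff $-\det D^2\varphi(\mathbf{x}_0)+f(\mathbf{x}_0)\le 0$, i.e. iff $\mathcal{F}_*(\mathbf{x}_0,u^*(\mathbf{x}_0),D^2\varphi(\mathbf{x}_0))\le 0$. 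For the supersolution property, suppose $u_*-\varphi$ has a local minimum at $\mathbf{x}_0$; the second equivalence of Step 1 shows the HJB supersolution inequality holds iff $D^2\varphi(\mathbf{x}_0)\not\ge 0$ or $-\det D^2\varphi(\mathbf{x}_0)+f(\mathbf{x}_0)\ge 0$, which is exactly the convexity-constrained Monge-Amp\`ere supersolution inequality $\mathcal{F}^*(\mathbf{x}_0,u_*(\mathbf{x}_0),D^2\varphi(\mathbf{x}_0))\ge 0$ (this inequality being vacuous on non--positive-semi-definite test Hessians). Finally, any HJB viscosity subsolution is convex in the viscosity sense — otherwise some $\varphi$ touching $u$ from above would have a negative eigenvalue of $D^2\varphi(\mathbf{x}_0)$, and the rank-one control along the corresponding eigenvector would make the HJB operator strictly positive, contradicting the subsolution inequality — so the convexity requirement in (\ref{eq:MAE2}) is automatically satisfied. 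Putting these together, $u$ is the viscosity solution of (\ref{eq:convertHJB}) if and only if it is the convex viscosity solution of (\ref{eq:MAE2}); existence and uniqueness of both are as recalled in Section \ref{sec:viscosity}.

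\emph{Main obstacle.} Step 1 is a finite computation. The delicate point is Step 2: reconciling the convexity-constrained Monge-Amp\`ere operator and its semicontinuous envelopes $\mathcal{F}^*,\mathcal{F}_*$ with the plain HJB operator — especially on the boundary of the positive semi-definite cone and in the degenerate regime $f=0$, where ellipticity is not strict — and making the ``automatic convexity'' of HJB viscosity solutions rigorous at the level of semicontinuous envelopes rather than pointwise Hessians. These are precisely the issues settled in \cite{feng2016convergent}, whose treatment I would follow for those points.
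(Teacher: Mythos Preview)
Your proposal is correct and in fact goes well beyond what the paper does: the paper's own ``proof'' of this theorem consists solely of citations to \cite{smearshamilton} for the classical case and \cite{feng2016convergent} for the viscosity extension. Your two-step strategy --- establish the pointwise closed-form identity for the HJB operator on an arbitrary symmetric matrix $M$, then transfer it to viscosity sub/supersolutions via $C^2$ test functions --- is exactly the route taken in those references, and the closed form and equivalences you state in Step~1 are correct (the check that $\max\le 0$ forces $M\ge 0$ follows because $\tr M\ge 0$ together with $\det M\ge f\ge 0$ gives both eigenvalues nonnegative). The points you flag as the main obstacle --- handling the semicontinuous envelopes $\mathcal{F}^*,\mathcal{F}_*$ at the boundary of the positive-semidefinite cone and the automatic convexity of HJB subsolutions --- are precisely what \cite{feng2016convergent} supplies, so your deferral to that reference there is appropriate and matches the paper.
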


\begin{proof}
We refer interested readers to the proof in \cite{smearshamilton} when $u$ is a classical solution, and the proof in \cite{feng2016convergent} for the extension to the viscosity solution.
\hfill
\end{proof}

We notice that due to the positive semi-definite property of the matrix $A(\mathbf{x})$, it can be diagonalized by an order-two orthogonal matrix. More specifically, $A(\mathbf{x})\in S_1^+$ can be parametrized as follows:
\begin{equation}
\label{eq:parametrize}
\renewcommand*{\arraystretch}{1}
\begin{array}{r}
A(\mathbf{x}) = 
\left(
\renewcommand*{\arraystretch}{1}
\begin{matrix}
\cos\theta(\mathbf{x}) & \sin\theta(\mathbf{x}) \\
-\sin\theta(\mathbf{x}) & \cos\theta(\mathbf{x}) \\
\end{matrix}
\right)
\left(
\renewcommand*{\arraystretch}{1}
\begin{matrix}
a(\mathbf{x}) & 0 \\
0 & 1-a(\mathbf{x}) \\
\end{matrix}
\right)
\left(
\renewcommand*{\arraystretch}{1}
\begin{matrix}
\cos\theta(\mathbf{x}) & -\sin\theta(\mathbf{x}) \\
\sin\theta(\mathbf{x}) & \cos\theta(\mathbf{x}) \\
\end{matrix}
\right),
\\
a(\mathbf{x}) \in [0,1], \;
\theta(\mathbf{x}) \in [-\pi,\pi).
\end{array}
\end{equation}
This parametrization gives rise to the following HJB equation, which we aim at solving.
\begin{corollary}
Under the parametrization (\ref{eq:parametrize}), the HJB equation (\ref{eq:convertHJB}) becomes
\begin{equation}
\label{eq:HJB1}
\renewcommand*{\arraystretch}{1}
\begin{array}{rl}
\displaystyle\max_{(a(\mathbf{x}),\theta(\mathbf{x}))\in \Gamma}
&
\left\{
- \alpha_{11} (a(\mathbf{x}),\theta(\mathbf{x}))
u_{xx}(\mathbf{x})
- 2 \alpha_{12} (a(\mathbf{x}),\theta(\mathbf{x}))
u_{xy}(\mathbf{x})
\right.
\\
&
\left.
- \alpha_{22} (a(\mathbf{x}),\theta(\mathbf{x}))
u_{yy}(\mathbf{x})
+ 2\sqrt{a(\mathbf{x})(1-a(\mathbf{x}))f(\mathbf{x})}
\right\}
= 0,
\end{array}
\end{equation}
where $(a(\mathbf{x}),\theta(\mathbf{x}))$ is the pair of controls at point $\mathbf{x}$, $\Gamma = [0,1]\times\left[-\frac{\pi}{4},\frac{\pi}{4}\right)$ is the set of admissible controls\footnote{Although (\ref{eq:parametrize}) defines the admissible control set to be in the range of $[0,1]\times[-\pi,\pi)$, the optimal control pair $(a^*,\theta^*)$ that maximizes (\ref{eq:HJB1}) may not be unique in $[0,1]\times[-\pi,\pi)$. We notice that since $\mathcal{L}_{a,\theta} \, u = \mathcal{L}_{a,\theta+\pi} \, u$, and $\mathcal{L}_{a,\theta} \, u = \mathcal{L}_{1-a,\theta+\frac{\pi}{2}} \, u$, the admissible control set $\Gamma$ can be reduced to $[0,1]\times[-\frac{\pi}{4},\frac{\pi}{4})$. Such removal of the redundancy of $\Gamma$ ensures that the optimal control pair $(a^*,\theta^*)$ is unique in $\Gamma$, except when $a^*=\frac{1}{2}$ or when $f=0$.}
, and the coefficients are
\begin{equation}
\label{eq:HJBcoeff}
\renewcommand*{\arraystretch}{1.3}
\begin{array}{rl}
\alpha_{11}(a(\mathbf{x}),\theta(\mathbf{x})) &
= \frac{1}{2} [ 1-(1-2a(\mathbf{x}))\cos 2\theta(\mathbf{x}) ],
\\
\alpha_{22}(a(\mathbf{x}),\theta(\mathbf{x})) &
= \frac{1}{2} [ 1+(1-2a(\mathbf{x}))\cos 2\theta(\mathbf{x}) ],
\\
\alpha_{12}(a(\mathbf{x}),\theta(\mathbf{x})) &
= \frac{1}{2} (1-2a(\mathbf{x}))\sin 2\theta(\mathbf{x}).
\end{array}
\end{equation}
\end{corollary}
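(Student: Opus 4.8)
The statement is obtained by a direct computation, followed by a short symmetry argument for the reduction of the admissible set. First I would write the matrix in (\ref{eq:parametrize}) as $A(\mathbf{x}) = R_\theta\,\mathrm{diag}(a,1-a)\,R_\theta^{T}$, where $R_\theta = \left(\begin{smallmatrix}\cos\theta & \sin\theta \\ -\sin\theta & \cos\theta\end{smallmatrix}\right)$ and $\theta=\theta(\mathbf{x})$, $a=a(\mathbf{x})$. Multiplying out the three factors gives
$A_{11}=a\cos^2\theta+(1-a)\sin^2\theta$, $A_{22}=a\sin^2\theta+(1-a)\cos^2\theta$, and $A_{12}=A_{21}=(1-2a)\sin\theta\cos\theta$. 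Applying the double-angle identities $\cos^2\theta=\tfrac12(1+\cos2\theta)$, $\sin^2\theta=\tfrac12(1-\cos2\theta)$, $2\sin\theta\cos\theta=\sin2\theta$ turns these entries into exactly $\alpha_{11},\alpha_{22},\alpha_{12}$ of (\ref{eq:HJBcoeff}).

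Next, since $D^2u$ is symmetric, $\tr[A\,D^2u]=A_{11}u_{xx}+(A_{12}+A_{21})u_{xy}+A_{22}u_{yy}=\alpha_{11}u_{xx}+2\alpha_{12}u_{xy}+\alpha_{22}u_{yy}$, and since $R_\theta$ is orthogonal, $\det A=\det\mathrm{diag}(a,1-a)=a(1-a)$, so $2\sqrt{\det(A)\,f}=2\sqrt{a(1-a)f}$. Substituting both into the bracketed quantity of (\ref{eq:convertHJB}) reproduces the objective of (\ref{eq:HJB1}) verbatim. Because (\ref{eq:parametrize}) realizes every $A\in S_1^+$ for some $(a,\theta)\in[0,1]\times[-\pi,\pi)$, the maximization $\max_{A\in S_1^+}$ becomes $\max_{(a,\theta)\in[0,1]\times[-\pi,\pi)}$ of this objective.

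It then remains to shrink the control set to $\Gamma=[0,1]\times[-\tfrac{\pi}{4},\tfrac{\pi}{4})$. Writing $\mathcal{L}_{a,\theta}u$ for the whole bracketed objective in (\ref{eq:HJB1}), I would verify from (\ref{eq:HJBcoeff}) the two identities $\mathcal{L}_{a,\theta}u=\mathcal{L}_{a,\theta+\pi}u$ and $\mathcal{L}_{a,\theta}u=\mathcal{L}_{1-a,\theta+\pi/2}u$: the first because $\cos2(\theta+\pi)=\cos2\theta$ and $\sin2(\theta+\pi)=\sin2\theta$; the second because replacing $(a,\theta)$ by $(1-a,\theta+\tfrac{\pi}{2})$ flips the sign of $1-2a$ and, simultaneously, of $\cos2\theta$ and $\sin2\theta$ (as $\cos(2\theta+\pi)=-\cos2\theta$, $\sin(2\theta+\pi)=-\sin2\theta$), leaving each $\alpha_{ij}$ and the product $a(1-a)$ — hence the entire objective — unchanged. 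The first identity reduces the $\theta$-range to any half-open interval of length $\pi$, say $[-\tfrac{\pi}{2},\tfrac{\pi}{2})$; combining it with the second (which, together with $\mathcal{L}_{a,\theta}u=\mathcal{L}_{a,\theta+\pi}u$, also gives $\mathcal{L}_{a,\theta}u=\mathcal{L}_{1-a,\theta-\pi/2}u$) identifies $[\tfrac{\pi}{4},\tfrac{\pi}{2})$ with $[-\tfrac{\pi}{4},0)$ and $[-\tfrac{\pi}{2},-\tfrac{\pi}{4})$ with $[0,\tfrac{\pi}{4})$ at equal objective value. Thus every admissible control has a representative in $\Gamma$ with the same value, so $\max_{A\in S_1^+}=\max_{(a,\theta)\in\Gamma}$, which is (\ref{eq:HJB1}).

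The algebra here is routine; the only point demanding care is this last step, where one must check the two symmetries at the level of the \emph{entire} objective — including the zeroth-order term $2\sqrt{a(1-a)f}$ — so that restricting from $S_1^+$ to $\Gamma$ genuinely preserves the value of the maximum and no maximizer is lost. (The sharper claim recorded in the footnote, that the maximizer is unique in $\Gamma$ unless $a^{*}=\tfrac12$ or $f=0$, would follow by tracking exactly when the above identifications can map two distinct controls to the same objective value.)
\hfill
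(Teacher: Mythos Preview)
Your proof is correct and follows exactly the approach implicit in the paper: the paper states this corollary without an explicit proof, treating it as a direct computation from the parametrization (\ref{eq:parametrize}), and the footnote records precisely the two symmetries $\mathcal{L}_{a,\theta}u=\mathcal{L}_{a,\theta+\pi}u$ and $\mathcal{L}_{a,\theta}u=\mathcal{L}_{1-a,\theta+\pi/2}u$ you use to reduce the admissible set to $\Gamma$. Your write-up simply makes the routine algebra and the symmetry reduction explicit, which is the only reasonable route here.
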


For convenience, we rewrite the HJB equation (\ref{eq:HJB1}) as
\begin{equation}
\label{eq:HJB2}
\mathcal{F}
\left(
\mathbf{x}, u(\mathbf{x}), D^2 u(\mathbf{x})
\right)
\equiv 
\displaystyle\max_{(a(\mathbf{x}),\theta(\mathbf{x}))\in \Gamma}
\mathcal{L}_{a(\mathbf{x}),\theta(\mathbf{x})} \, u(\mathbf{x})
= 0,
\end{equation}
where the differential operator of the HJB equation is given by
\begin{equation}
\label{eq:HJBdiff}
\mathcal{L}_{a,\theta} \, u
\equiv
- \alpha_{11}(a,\theta) u_{xx}
- 2 \alpha_{12}(a,\theta) u_{xy}
- \alpha_{22}(a,\theta) u_{yy}
+ 2\sqrt{a(1-a)f}.
\end{equation}
We note that since the HJB equation (\ref{eq:HJB2})-(\ref{eq:HJBdiff}) and the Monge-Amp\`ere equation (\ref{eq:MAE2}) are mathematically equivalent, we still use the notation
$\mathcal{F}
\left(
\mathbf{x}, u(\mathbf{x}), D^2 u(\mathbf{x})
\right)$
to denote the HJB equation.


The HJB formulation introduces some favorable properties over the Monge-Amp\`ere equation (\ref{eq:MAE2}). We first notice that in the equivalent HJB equation (\ref{eq:convertHJB}) or (\ref{eq:HJB1}), the convexity constraint of the Monge-Amp\`ere equation disappears. Indeed, the convexity constraint is implicitly enforced in the HJB formulation.
The reason is that the proof of Theorem \ref{thm:MAEtoHJB}, where the Monge-Amp\`ere equation is converted to the HJB equation, has already taken into account that $u$ is a convex function.
We remark that the convexity constraint poses a major difficulty in designing a convergent numerical scheme for Monge-Amp\`ere equation; see \cite{froese2011convergent} for a discussion. However, in the HJB formulation, there is no need to explicitly impose the convexity constraint any more, which makes the numerical computation more manageable.

Another useful property of the HJB equation (\ref{eq:HJB2})-(\ref{eq:HJBdiff}) is that for a fixed given control pair $(a,\theta)$, the differential operator $\mathcal{L}_{a,\theta} \, u$ is linear. We note, however, that the HJB equation itself is still nonlinear, since the maximization depends on $u$. Unlike (\ref{eq:MAE2}), the linear differential operator $\mathcal{L}_{a,\theta} \, u$ does not contain products of the second derivatives. The linearity of $\mathcal{L}_{a,\theta} \, u$ allows us to develop finite difference schemes based on numerical methods for linear PDEs.

Considering these advantages of the HJB formulation, our approach is to solve the HJB equation (\ref{eq:HJB1}) instead of the Monge-Amp\`ere equation (\ref{eq:MAE2}).


\section{Mixed Finite Difference Discretization}
\label{sec:discrete}

In this section, we will construct a monotone finite difference discretization for the HJB equation (\ref{eq:HJB1}). Monotonicity is a desirable property, since \cite{barles1991convergence} has proved that monotonicity is one of the sufficient conditions for a numerical scheme to converge to the viscosity solution.

To set up notation, let us consider an $N\times N$ square grid
$
\left\{ \mathbf{x}_{i,j}=(x_i,y_j) \right\}
$,
where $\mathbf{x}_{i,j}\in\Omega$ when $i,j=1,\cdots,N$, and $\mathbf{x}_{i,j}\in\partial\Omega$ when $i,j=0$ or $N+1$. Also, let $h$ be the mesh size and let $u_{i,j}$, $a_{i,j}$, $\theta_{i,j}$ and $f_{i,j}$ be the grid functions of $u(\mathbf{x}_{i,j})$, $a(\mathbf{x}_{i,j})$, $\theta(\mathbf{x}_{i,j})$ and $f(\mathbf{x}_{i,j})$, respectively. Our goal is to solve the set of the unknowns $\{u_{i,j} \,|\, 1 \leq i \leq N, 1 \leq j \leq N \}$.

\subsection{Standard 7-point stencil discretization}
\label{subsec:narrow}

Consider discretizing the HJB equation (\ref{eq:HJB1}) at a grid point $\mathbf{x}_{i,j}$. We can use the standard central differencing to approximate $u_{xx}(\mathbf{x}_{i,j})$  and $u_{yy}(\mathbf{x}_{i,j})$ as follows:
\begin{equation}
\label{eq:derivative_narrow0}
(\delta_{xx} u)_{i,j}
\equiv
\frac{u_{i+1,j}-2u_{i,j}+u_{i-1,j}}{h^2},
\quad
(\delta_{yy} u)_{i,j}
\equiv
\frac{u_{i,j+1}-2u_{i,j}+u_{i,j-1}}{h^2}.
\end{equation}
It can be shown that the standard 7-point stencil discretization for $u_{xy}(\mathbf{x}_{i,j})$ can lead to a monotone scheme in the following two cases:

\underline{Case 1.} When the coefficients $\alpha_{11}$, $\alpha_{22}$ and $\alpha_{12}$ in (\ref{eq:HJBcoeff}) satisfy
\begin{equation}
\label{eq:narrow_cond1}
\renewcommand*{\arraystretch}{1}
\begin{array}{r}
\alpha_{11}(a_{i,j},\theta_{i,j}) \geq |\alpha_{12}(a_{i,j},\theta_{i,j})|, \;
\alpha_{22}(a_{i,j},\theta_{i,j}) \geq |\alpha_{12}(a_{i,j},\theta_{i,j})|, \;
\\
\text{and }
\alpha_{12}(a_{i,j},\theta_{i,j})\geq 0
\text{ at the grid point } \mathbf{x}_{i,j},
\end{array}
\end{equation}
we approximate $u_{xy}(\mathbf{x}_{i,j})$ using
\begin{equation}
\label{eq:derivative_narrow1}
(\delta_{xy}^{[1]} u)_{i,j}
\equiv
\frac
{2u_{i,j}+u_{i+1,j+1}+u_{i-1,j-1}
-u_{i+1,j}-u_{i-1,j}-u_{i,j+1}-u_{i,j-1}}
{2h^2}.
\end{equation}

\underline{Case 2.} When the coefficients $\alpha_{11}$, $\alpha_{22}$ and $\alpha_{12}$ in (\ref{eq:HJBcoeff}) satisfy
\begin{equation}
\label{eq:narrow_cond2}
\renewcommand*{\arraystretch}{1}
\begin{array}{r}
\alpha_{11}(a_{i,j},\theta_{i,j}) \geq |\alpha_{12}(a_{i,j},\theta_{i,j})|, \;
\alpha_{22}(a_{i,j},\theta_{i,j}) \geq |\alpha_{12}(a_{i,j},\theta_{i,j})|, \;
\\
\text{and }
\alpha_{12}(a_{i,j},\theta_{i,j})\leq 0
\text{ at the grid point } \mathbf{x}_{i,j},
\end{array}
\end{equation}
we approximate $u_{xy}(\mathbf{x}_{i,j})$ using
\begin{equation}
\label{eq:derivative_narrow2}
(\delta_{xy}^{[2]} u)_{i,j}
\equiv
\frac
{-2u_{i,j}-u_{i+1,j-1}-u_{i-1,j+1}
+u_{i+1,j}+u_{i-1,j}+u_{i,j+1}+u_{i,j-1}}
{2h^2}.
\end{equation}


\subsection{Semi-Lagrangian wide stencil discretization}
\label{subsec:wide}


However, if neither (\ref{eq:narrow_cond1}) nor (\ref{eq:narrow_cond2}) is fulfilled at the grid point $\mathbf{x}_{i,j}$, then it is unclear how to directly discretize the cross derivative $u_{xy}(\mathbf{x}_{i,j})$ in (\ref{eq:HJB1}) monotonically. Our approach, following \cite{debrabant2013semi} and \cite{ma2014unconditionally}, is to eliminate the cross derivative $u_{xy}(\mathbf{x}_{i,j})$ by a local coordinate transformation. Let $\{(\mathbf{e}_z)_{i,j},(\mathbf{e}_w)_{i,j}\}$ be a local orthogonal basis which is obtained by a rotation of the standard axes $\{(\mathbf{e}_x)_{i,j},(\mathbf{e}_y)_{i,j}\}$ at an angle $\phi_{i,j}$; see Figure \ref{fig:widestencil_all} (left). If the rotation angle is chosen as
$
\phi_{i,j}
= \frac{1}{2} \arctan
\frac{
2\alpha_{12}\left(
a_{i,j},\theta_{i,j}
\right)
}
{
\alpha_{11}\left(
a_{i,j},\theta_{i,j}
\right)
- \alpha_{22}\left(
a_{i,j},\theta_{i,j}
\right)
}
= -\theta_{i,j}
$,
then the cross derivative vanishes under the basis $\{(\mathbf{e}_z)_{i,j},(\mathbf{e}_w)_{i,j}\}$. By straightforward algebra, one can show that (\ref{eq:HJB1}) becomes
\begin{equation}
\label{eq:disc_wide1}
\displaystyle\max_{(a_{i,j},\theta_{i,j})\in \Gamma}
\left\{
- a_{i,j} \,
u_{zz} (\mathbf{x}_{i,j})
- \left( 1-a_{i,j} \right) \,
u_{ww} (\mathbf{x}_{i,j})
+ 2\sqrt{
a_{i,j} \left( 1-a_{i,j} \right) f_{i,j}
}
\right\}
= 0.
\end{equation}
Here $u_{zz}(\mathbf{x}_{i,j})$ and $u_{ww}(\mathbf{x}_{i,j})$ are the directional derivatives along the basis $(\mathbf{e}_z)_{i,j}$ and $(\mathbf{e}_w)_{i,j}$, which depend on the rotation $\theta_{i,j}$.


\begin{figure}[t!]
\begin{center}
\includegraphics[scale=0.22]{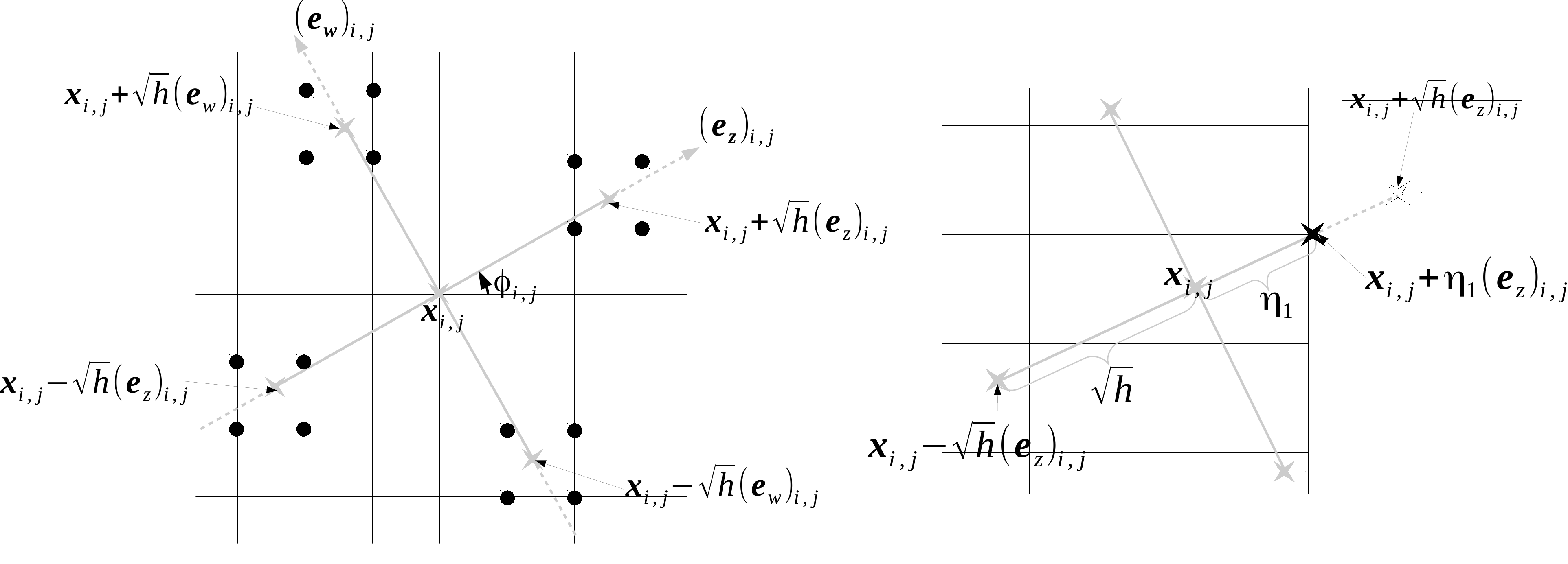}
\end{center}
\caption{\label{fig:widestencil_all}
(left) Local coordinate rotation at the grid point $\mathbf{x}_{i,j}$, and semi-Lagrangian wide stencil discretization of $u_{zz}(\mathbf{x}_{i,j})$ and $u_{ww}(\mathbf{x}_{i,j})$ under the rotation. The rotation angle is $\phi_{i,j}$, counter-clockwise. The grey dashed lines are the orthogonal axis $\{(\mathbf{e}_z)_{i,j}, (\mathbf{e}_w)_{i,j}\}$. The stencil length is $\sqrt{h}$ ($\sqrt{h}>h$). The grey stars are the stencil points $\mathbf{x}_{i,j}\pm\sqrt{h}(\mathbf{e}_z)_{i,j}$ and $\mathbf{x}_{i,j}\pm\sqrt{h}(\mathbf{e}_w)_{i,j}$. The unknowns at these stencil points are approximated by the bilinear interpolation from the neighboring points (black dots). Standard central differencing associated with this wide stencil is applied to approximate $u_{zz}(\mathbf{x}_{i,j})$ and $u_{ww}(\mathbf{x}_{i,j})$.
\quad
(right) Semi-Lagrangian wide stencil discretization near the boundary. One of the wide stencil points $\mathbf{x}_{i,j}+\sqrt{h}(\mathbf{e}_z)_{i,j}$ falls outside $\overline{\Omega}$ (hollow star). The wide stencil is truncated and the stencil point is relocated to the point $\mathbf{x}_{i,j}+\eta_1(\mathbf{e}_z)_{i,j}\in\partial\Omega$ (black star). The corresponding stencil length has shrunk from $\sqrt{h}$ to $\eta_1$.
}
\end{figure}

We may consider the finite difference discretization of (\ref{eq:disc_wide1}) by applying the standard central differencing to $u_{zz} (\mathbf{x}_{i,j})$ and $u_{ww} (\mathbf{x}_{i,j})$. For instance, we approximate
$u_{zz} (\mathbf{x}_{i,j})$
by
$\frac{1}{h^2}\left[
u(\mathbf{x}_{i,j}+h(\mathbf{e}_z)_{i,j})
-2u_{i,j}
+u(\mathbf{x}_{i,j}-h(\mathbf{e}_z)_{i,j})
\right]$.
However, since the stencil is rotated, the stencil points $\mathbf{x}_{i,j}\pm h(\mathbf{e}_z)_{i,j}$ may no longer coincide with any grid points. In such cases, bilinear interpolation from the neighboring grid points can be used to approximate $u(\mathbf{x}_{i,j}\pm h(\mathbf{e}_z)_{i,j})$. However, a consequence of the bilinear interpolation is that the truncation error of this central difference approximation becomes $O(1)$ if the stencil length is $h$. In order to maintain consistency, we choose the stencil length $\sqrt{h}$, which yields $O(h)$ truncation error. Note that when $h$ is small, $\sqrt{h}>h$, which means the stencil length appears to be wide. The details of the discretization is explained in Figure \ref{fig:widestencil_all} (left). As a result, the finite difference discretization for $u_{zz} (\mathbf{x}_{i,j})$ and $u_{ww} (\mathbf{x}_{i,j})$ is given by
\begin{align}
\renewcommand*{\arraystretch}{2.4}
\label{eq:derivative_wide1}
&
(\delta_{zz} u)_{i,j}
\equiv
\frac{
\left.
\mathcal{I}_h u
\right|_{\mathbf{x}_{i,j}+\sqrt{h}(\mathbf{e}_z)_{i,j}}
-2u_{i,j}
+\left.
\mathcal{I}_h u
\right|_{\mathbf{x}_{i,j}-\sqrt{h}(\mathbf{e}_z)_{i,j}}
}
{h},
\\
\label{eq:derivative_wide2}
&
(\delta_{ww} u)_{i,j}
\equiv
\frac{
\left.
\mathcal{I}_h u
\right|_{\mathbf{x}_{i,j}+\sqrt{h}(\mathbf{e}_w)_{i,j}}
-2u_{i,j}
+\left.
\mathcal{I}_h u
\right|_{\mathbf{x}_{i,j}-\sqrt{h}(\mathbf{e}_w)_{i,j}}
}
{h},
\end{align}
where we have used the stencil length $\sqrt{h}$, and used bilinear interpolation to approximate the unknown values at the stencil points $\mathbf{x}_{i,j}\pm\sqrt{h}(\mathbf{e}_z)_{i,j}$ and $\mathbf{x}_{i,j}\pm\sqrt{h}(\mathbf{e}_w)_{i,j}$, denoted as
$\left.
\mathcal{I}_h u
\right|_{\mathbf{x}_{i,j}\pm\sqrt{h}(\mathbf{e}_z)_{i,j}}$
and
$\left.
\mathcal{I}_h u
\right|_{\mathbf{x}_{i,j}\pm\sqrt{h}(\mathbf{e}_w)_{i,j}}$.
Such discretization scheme is called semi-Lagrangian wide stencil discretization \cite{debrabant2013semi,ma2014unconditionally}.


If we apply the semi-Lagrangian wide stencil discretization at a grid point $\mathbf{x}_{i,j}$ that is close to the boundary, some of its associated stencil points may fall outside the computational domain $\overline{\Omega}$. In such case, our solution is to shrink the corresponding stencil length(s) such that the stencil point(s) are relocated onto the boundary $\partial\Omega$. Without loss of generality, we analyze one scenario; see Figure \ref{fig:widestencil_all} (right). Let us assume that $\mathbf{x}_{i,j}+\sqrt{h}(\mathbf{e}_z)_{i,j}$ falls outside $\overline{\Omega}$. We truncate the corresponding stencil length from $\sqrt{h}$ to $\eta_1$ along the $\mathbf{e}_z$ axis, such that the stencil point is relocated to $\mathbf{x}_{i,j}+\eta_1 (\mathbf{e}_z)_{i,j}\in\partial\Omega$. Since $\eta_1\neq\sqrt{h}$, the finite difference approximation for $u_{zz}(\mathbf{x}_{i,j})$ in (\ref{eq:derivative_wide1}) is replaced by
\begin{equation}
\label{eq:derivative_wide1_bc1}
(\delta_{zz} u)_{i,j}
\equiv
\frac{
\frac{
g(\mathbf{x}_{i,j}+\eta_1 (\mathbf{e}_z)_{i,j})
-u_{i,j}
}
{\eta_1}
- \frac{
u_{i,j}
- \left.
\mathcal{I}_h u
\right|_{\mathbf{x}_{i,j}-\sqrt{h}(\mathbf{e}_z)_{i,j}}
}
{\sqrt{h}}
}
{
\frac{\eta_1+\sqrt{h}}{2}
},
\end{equation}
where we have used the Dirichlet boundary condition of (\ref{eq:MAE}): $u(\mathbf{x}_{i,j}+\eta_1 (\mathbf{e}_z)_{i,j}) = g(\mathbf{x}_{i,j}+\eta_1 (\mathbf{e}_z)_{i,j})$. We note that such procedure can be used whenever $\mathbf{x}_{i,j}$ is close to the boundary and a truncation of stencil is needed.

\subsection{Mixed discretization}
\label{subsec:mixed}

Section \ref{subsec:narrow} and \ref{subsec:wide} describe the standard 7-point stencil and semi-Lagrangian wide stencil finite difference discretization for the HJB equation (\ref{eq:HJB1}). The advantage of the semi-Lagrangian wide stencil discretization is that it is unconditionally monotone. Reference \cite{feng2016convergent} applies the semi-Lagrangian wide stencil discretization at every grid point. However, it is only first order accurate, while the standard 7-point stencil discretization is second order accurate, as will be proved in Section \ref{sec:converge}. In order to combine the advantages of both discretization schemes, we will only apply the semi-Lagrangian wide stencil discretization at the grid points where neither (\ref{eq:narrow_cond1}) nor (\ref{eq:narrow_cond2}) is satisfied. For the other grid points where either (\ref{eq:narrow_cond1}) or (\ref{eq:narrow_cond2}) is fulfilled, we will apply the standard 7-point stencil discretization. The purpose is to strictly maintain monotonicity at every grid point and meanwhile to make the numerical scheme as accurate as possible. As a result, the discrete equation at each grid point $\mathbf{x}_{i,j}$ is given by the following mixed scheme:

\underline{Standard 7-point stencil discretization.} When the control pair $(a_{i,j},\theta_{i,j})$ satisfies Condition (\ref{eq:narrow_cond1}) or (\ref{eq:narrow_cond2}), the discrete equation is given by
\begin{equation}
\label{eq:disc_narrow}
\renewcommand*{\arraystretch}{1}
\begin{array}{rl}
\displaystyle\max_{(a_{i,j},\theta_{i,j})\in \Gamma}
\left\{
- \alpha_{11} (a_{i,j},\theta_{i,j})
(\delta_{xx} u)_{i,j}
- 2 \alpha_{12} (a_{i,j},\theta_{i,j})
(\delta_{xy}^{[disc]} u)_{i,j}
\right.
&
\\
\left.
- \alpha_{22} (a_{i,j},\theta_{i,j})
(\delta_{yy} u)_{i,j}
+ 2\sqrt{a_{i,j}(1-a_{i,j})f_{i,j}}
\right\}
&
= 0,
\end{array}
\end{equation}
where $disc=$ 1 or 2 if (\ref{eq:narrow_cond1}) or (\ref{eq:narrow_cond2}) is satisfied respectively.

\underline{Semi-Lagrangian wide stencil discretization.} Otherwise, the discrete equation is given by
\begin{equation}
\label{eq:disc_wide2}
\renewcommand*{\arraystretch}{1}
\begin{array}{rl}
\displaystyle\max_{(a_{i,j},\theta_{i,j})\in \Gamma}
\left\{
\frac{•}{•}
- a_{i,j} \,
(\delta_{zz} u)_{i,j}
- \left( 1-a_{i,j} \right) \,
(\delta_{ww} u)_{i,j}
\right.
\hspace{1cm}
&
\\
\left.
+ 2\sqrt{
a_{i,j} \left( 1-a_{i,j} \right) f_{i,j}
}
\right\}
&
= 0,
\end{array}
\end{equation}
where $(\delta_{zz} u)_{i,j}$ and $(\delta_{ww} u)_{i,j}$ are defined by (\ref{eq:derivative_wide1}) and (\ref{eq:derivative_wide2}) when $\mathbf{x}_{i,j}$ is inside the computational domain, and by (\ref{eq:derivative_wide1_bc1}) or similar expressions when $\mathbf{x}_{i,j}$ is near the boundary.

\subsection{The nonlinear discrete system}
\label{subsec:discrete}

The mixed discretization scheme, defined by (\ref{eq:disc_narrow}) and (\ref{eq:disc_wide2}), gives rise to a nonlinear discrete system that contains $N^2$ discrete equations. If we define a vector of the unknowns
$
u_h \equiv
(
u_{1,1},
u_{1,2},
\cdots,
u_{1,N},
u_{2,1},
\cdots
\cdots,
u_{N,N}
)^T
\in \mathbb{R}^{N^2\times 1}
$,
and similarly, vectors of controls $a_h\in \mathbb{R}^{N^2\times 1}$, $\theta_h\in \mathbb{R}^{N^2\times 1}$, then the entire nonlinear discrete system can be written into the following matrix form:
\begin{equation}
\label{eq:discretecomplete}
\displaystyle\max_{(a_h,\theta_h)\in\Gamma}
\left\{
\mathbf{A}(a_h,\theta_h) \, u_h
- F_h(a_h,\theta_h)
\right\}
= 0,
\end{equation}
where $\mathbf{A}(a_h,\theta_h)\in \mathbb{R}^{N^2\times N^2}$ is a matrix that consists of the coefficients of $u^h$, and $F_h(a_h,\theta_h)\in \mathbb{R}^{N^2\times 1}$ is a vector that does not explicitly contain $u^h$. We note that this nonlinear system can be treated as a combination of an optimization problem and a linear system as follows:
\begin{equation}
\label{eq:discretecomplete1}
\mathcal{F}_h (u_h) \equiv
\displaystyle\max_{(a_h,\theta_h)\in\Gamma} \mathcal{L}_h (a_h,\theta_h; u_h) = 0,
\end{equation}
where the to-be-maximized linear system is
\begin{equation}
\label{eq:discretecomplete2}
\mathcal{L}_h (a_h,\theta_h; u_h) \equiv
\mathbf{A}(a_h,\theta_h) \, u_h
- F_h(a_h,\theta_h).
\end{equation}
Here the symbols $\mathcal{F}_h$ and $\mathcal{L}_h$ in (\ref{eq:discretecomplete1})-(\ref{eq:discretecomplete2}) represent the discretization of $\mathcal{F}$ and $\mathcal{L}$ in (\ref{eq:HJB2})-(\ref{eq:HJBdiff}), respectively.


\begin{figure}[t!]
\begin{center}
\includegraphics[scale=0.22]{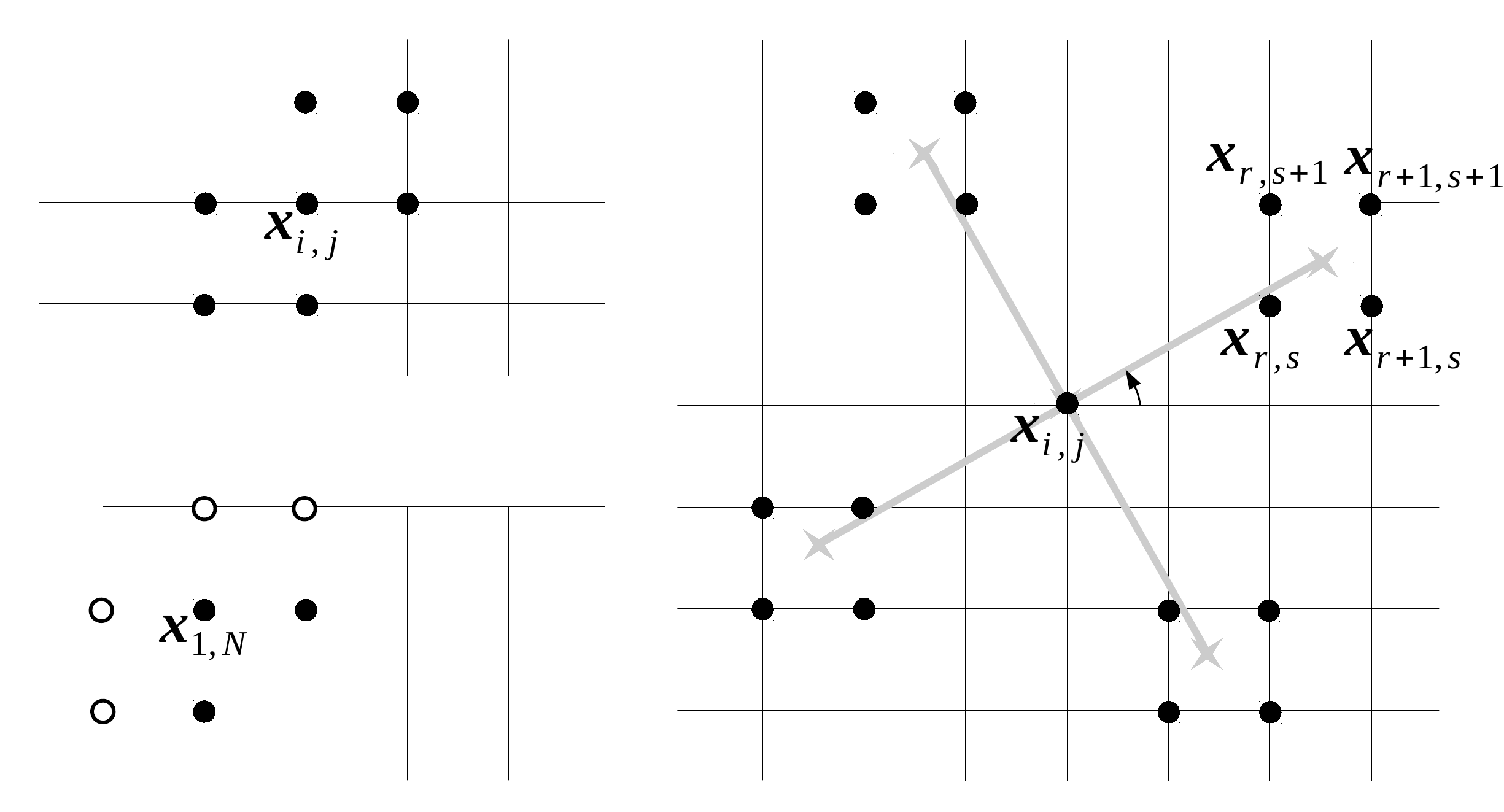}
\end{center}
\caption{\label{fig:linear_stencil}
(left-top) Case 1: Suppose Condition (\ref{eq:narrow_cond1}) is satisfied at $\mathbf{x}_{i,j}$ and the standard 7-point stencil discretization (\ref{eq:disc_narrow}) is used. The discrete equation contains 7 unknown values of $u_h$, labelled by the black dots. (left-bottom) Case 2: Consider $\mathbf{x}_{1,N}$, which is close to the boundary. The hollow dots sit on the boundary and the values of $u$ on these points are determined by the Dirichlet boundary condition. As a result, the discrete equation contains 3 unknown values of $u_h$, labeled by the black dots. (right) Case 3: Suppose neither (\ref{eq:narrow_cond1}) nor (\ref{eq:narrow_cond2}) is satisfied at $\mathbf{x}_{i,j}$ and thus semi-Lagrangian wide stencil discretization (\ref{eq:disc_wide2}) is used. Since bilinear interpolation of each stencil point contains 4 unknown values, the resulting discrete equation has 17 unknown values in total (black dots).}
\end{figure}

To show how the standard 7-point stencil discretization (\ref{eq:disc_narrow}) and the semi-Lagrangian wide stencil discretization (\ref{eq:disc_wide2}) can be written into the general form (\ref{eq:discretecomplete}), we analyze four cases.

\underline{Standard 7-point stencil discretization, grid point $\mathbf{x}_{i,j}$ inside $\Omega$.}
\hfill
Suppose Condition (\ref{eq:narrow_cond1}) is satisfied at $\mathbf{x}_{i,j}$. Then we use the standard 7-point stencil discretization (\ref{eq:disc_narrow}) with $disc=1$. This is illustrated in Figure \ref{fig:linear_stencil} (left-top). Some simple algebra shows that (\ref{eq:disc_narrow}) can be transformed into (\ref{eq:discretecomplete}) where
\begin{equation}
\label{eq:discretegridpoints_ex1}
\renewcommand*{\arraystretch}{1.8}
\begin{array}{l}
(\mathbf{A} u_h)_{i,j}
= \dfrac{2}{h^2}(\alpha_{11}+\alpha_{22}-\alpha_{12})u_{i,j}
-\dfrac{1}{h^2}(\alpha_{11}-\alpha_{12})u_{i+1,j}
\\
\quad
-\dfrac{1}{h^2}(\alpha_{11}-\alpha_{12})u_{i-1,j}
-\dfrac{1}{h^2}(\alpha_{22}-\alpha_{12})u_{i,j+1}
\\
\quad
-\dfrac{1}{h^2}(\alpha_{22}-\alpha_{12})u_{i,j-1}
-\dfrac{1}{h^2}\alpha_{12} \, u_{i+1,j+1}
-\dfrac{1}{h^2}\alpha_{12} \, u_{i-1,j-1},
\\
F_{i,j}
= -2\sqrt{a_{i,j}(1-a_{i,j})f_{i,j}},
\end{array}
\end{equation}
where $(\mathbf{A} u_h)_{i,j}$ and $F_{i,j}$ are the values of $\mathbf{A}(a_h,\theta_h) u_h$ and $F_h(a_h,\theta_h)$ at the grid point $\mathbf{x}_{i,j}$. For simplicity, we have suppressed the dependency of $\mathbf{A}$, $F_{i,j}$, $\alpha_{11}$, $\alpha_{22}$ and $\alpha_{12}$ on $(a_{i,j},\theta_{i,j})$. This equation contains 7 unknown values of $u_h$. Similarly, interested readers can also write down the expressions when Condition (\ref{eq:narrow_cond2}) is satisfied at $\mathbf{x}_{i,j}$ and the standard 7-point stencil discretization (\ref{eq:disc_narrow}) with $disc=2$ is applied.

\underline{Standard 7-point stencil discretization, grid point $\mathbf{x}_{i,j}$ near $\partial\Omega$.}
\hfill
Without loss of generality, we assume that $\mathbf{x}_{i,j}=\mathbf{x}_{1,N}$, as shown in Figure \ref{fig:linear_stencil} (left-bottom). Now $u_{i-1,j}$, $u_{i,j+1}$, $u_{i+1,j+1}$ and $u_{i-1,j-1}$ can be determined by the Dirichlet boundary condition $u=g$. These terms become part of $F_{i,j}$. As a result, $(\mathbf{A} u_h)_{i,j}$ contains only 3 unknown values.


\underline{Semi-Lagrangian wide stencil discretization, grid point $\mathbf{x}_{i,j}$ inside $\Omega$.}
\hfill
Suppose neither (\ref{eq:narrow_cond1}) nor (\ref{eq:narrow_cond2}) is fulfilled at $\mathbf{x}_{i,j}$, so semi-Lagrangian wide stencil discretization (\ref{eq:disc_wide2}) is applied; see Figure \ref{fig:linear_stencil} (right). Then (\ref{eq:disc_wide2}) can be written into (\ref{eq:discretecomplete}) where
\begin{equation}
\label{eq:discretegridpoints_ex3}
\renewcommand*{\arraystretch}{1.8}
\begin{array}{l}
(\mathbf{A} u_h)_{i,j}
= \dfrac{2}{h} u_{i,j}
-\dfrac{a_{i,j}}{h}
\left.
\mathcal{I}_h u
\right|_{\mathbf{x}_{i,j}+\sqrt{h}(\mathbf{e}_z)_{i,j}}
-\dfrac{a_{i,j}}{h}
\left.
\mathcal{I}_h u
\right|_{\mathbf{x}_{i,j}-\sqrt{h}(\mathbf{e}_z)_{i,j}}
\\
\hspace{2cm}
-\dfrac{1-a_{i,j}}{h}
\left.
\mathcal{I}_h u
\right|_{\mathbf{x}_{i,j}+\sqrt{h}(\mathbf{e}_w)_{i,j}}
-\dfrac{1-a_{i,j}}{h}
\left.
\mathcal{I}_h u
\right|_{\mathbf{x}_{i,j}-\sqrt{h}(\mathbf{e}_w)_{i,j}},
\\
F_{i,j}
= -2\sqrt{a_{i,j}(1-a_{i,j})f_{i,j}}.
\end{array}
\end{equation}
We note that each bilinear interpolation term contains 4 unknowns. For instance,
$
\left.
\mathcal{I}_h u
\right|_{\mathbf{x}_{i,j}+\sqrt{h}(\mathbf{e}_z)_{i,j}}
$
can be written as the linear combination of the unknowns at the four neighboring points $u_{r,s}$, $u_{r+1,s}$, $u_{r,s+1}$ and $u_{r+1,s+1}$, which are labeled in Figure \ref{fig:linear_stencil} (right). As a result, (\ref{eq:discretegridpoints_ex3}) has 17 unknown values.

\underline{Semi-Lagrangian wide stencil discretization, grid point $\mathbf{x}_{i,j}$ near $\partial\Omega$.}
The analysis is similar to the previous cases. The number of the unknowns is less than 17.



\section{Solving the Nonlinear Discrete System}
\label{sec:policy}

\subsection{Policy iteration}

After setting up the complete nonlinear discrete system (\ref{eq:discretecomplete1})-(\ref{eq:discretecomplete2}), the next objective is to solve it. We apply a well-known fixed point iteration algorithm, called policy iteration (or Howard's algorithm) \cite{howard1960dynamic,forsyth2007numerical} as follows:

\begin{enumerate}[leftmargin=8mm]

\item Start with an initial guess of the solution $u_h^{(0)}$.

\item For $k=0,1,...$ until convergence:

	\begin{enumerate}[leftmargin=5mm]
	
	\item Solve for the optimal control pair
	$( a_h^{(k)},\theta_h^{(k)} )$
	under the current solution $u_h^{(k)}$:
	\begin{equation}
	\label{eq:policy_step1}
	( a_{i,j}^{(k)},\theta_{i,j}^{(k)} )
	=
	\displaystyle
	\argmax_{\substack{(a_{i,j},\theta_{i,j}) \in\Gamma_{i,j}}}
	\mathcal{L}_{i,j} ( a_{i,j},\theta_{i,j}; u_h^{(k)} ),
	\quad
	\text{for all } \mathbf{x}_{i,j} \in \Omega,
	\end{equation}
	where $\mathcal{L}_{i,j}$ is the pointwise component of $\mathcal{L}_h\in\mathbb{R}^{N^2\times 1}$ defined in (\ref{eq:discretecomplete2}) and $\Gamma_{i,j}=[0,1]\times[-\frac{\pi}{4},\frac{\pi}{4})$ is the control set at $\mathbf{x}_{i,j}$.
	
	Meanwhile, obtain the residual $R_h^{(k)}\in\mathbb{R}^{N^2\times 1}$, where each pointwise component reads
	$
	R_{i,j}^{(k)} \equiv \mathcal{L}_{i,j} ( a_{i,j}^{(k)},\theta_{i,j}^{(k)}; u_h^{(k)} )
	$.
	
	\item If $\|R_h^{(k)}\| \leq \text{tolerance}$: break

	Else, solve the following linear system for the solution $u_h^{(k+1)}$ under the current optimal control pair $(a_h^{(k)},\theta_h^{(k)})$:
	\begin{equation}
	\label{eq:policy_step2}
	\mathbf{A}(a_h^{(k)},\theta_h^{(k)}) \, u_h^{(k+1)}
	= F_h(a_h^{(k)},\theta_h^{(k)})
	\\
	\qquad
	\Rightarrow
	\qquad
	u_h^{(k+1)}.
	\end{equation}
	
	\end{enumerate}
	
\end{enumerate}

It is proved that policy iteration is guaranteed to converge for any initial guess $u_h^{(0)}$, if by applying a monotone discretization to an HJB equation, the resulting matrix $\mathbf{A}(a_h,\theta_h)$ is an M-matrix under all admissible controls \cite{bokanowski2009some,azimzadeh2016weakly}. We will show in Section \ref{subsec:stability} that the resulting matrix $\mathbf{A}(a_h,\theta_h)$ in (\ref{eq:discretecomplete}) is indeed an M-matrix.

Policy iteration consists of two sub-steps. One sub-step is to solve the linear system under a given control pair; see (\ref{eq:policy_step2}). We use Krylov subspace methods, such as the GMRES with the incomplete LU preconditioner.
The other sub-step of the policy iteration is to solve the optimization problem at each grid point $\mathbf{x}_{i,j}$; see (\ref{eq:policy_step1}).
We will discuss speeding up computation of the optimization problem in detail in the next section.

\subsection{Speeding up computation of optimal controls}
\label{subsec:controls}

\begin{figure}[t!]
\begin{center}
\includegraphics[scale=0.5]{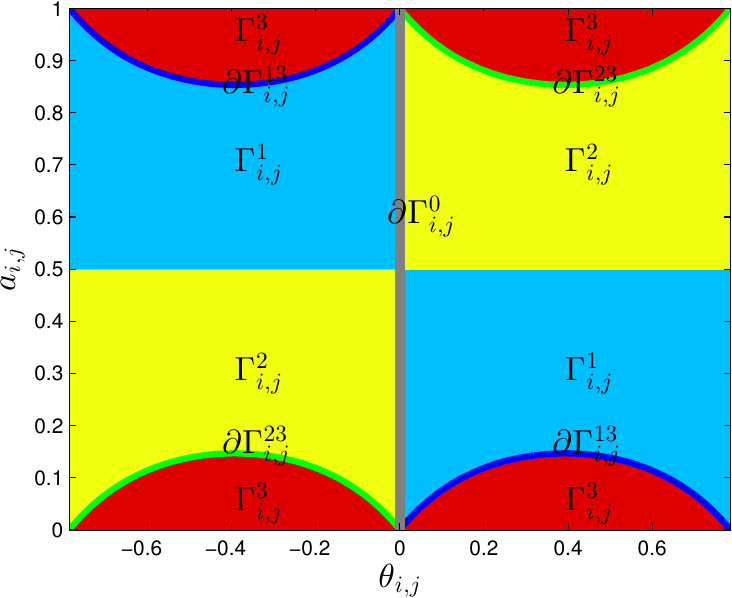}
\end{center}

\begin{center}
\small
\setlength\tabcolsep{2pt}
\begin{tabular}{|c|c|c|c|c|c|}
\hline
Region
&
Definition
&
Discretization
&
\begin{tabular}{c}
Optimization
\\
algorithm
\\
in each region
\end{tabular}
&
Cost
&
\begin{tabular}{c}
Extra
\\
truncation
\\
error
\\
introduced?
\end{tabular}
\\
\hline
$\Gamma_{i,j}^1$
&
\begin{tabular}{c}
The region where
\\
Condition (\ref{eq:narrow_cond1})
\\
is satisfied
\end{tabular}
&
\begin{tabular}{c}
Standard
\\
7-point stencil
\\
with $disc=1$
\end{tabular}
&
\multirow{2}{*}{
\begin{tabular}{c}
Closed-form 
\\
formula from
\\
first derivative test
\end{tabular}
}
&
\multirow{2}{*}{
$O(1)$
}
&
\multirow{2}{*}{
No
}
\\
\cline{1-3}
$\Gamma_{i,j}^2$
&
\begin{tabular}{c}
The region where
\\
Condition (\ref{eq:narrow_cond2})
\\
is satisfied
\end{tabular}
&
\begin{tabular}{c}
Standard
\\
7-point stencil
\\
with $disc=2$
\end{tabular}
&
&
&
\\
\hline
$\Gamma_{i,j}^{3}$
&
\begin{tabular}{c}
The region where
\\
neither (\ref{eq:narrow_cond1}) nor
\\
(\ref{eq:narrow_cond2}) is satisfied
\end{tabular}
&
\begin{tabular}{c}
Semi-Lagrangian
\\
wide stencil
\end{tabular}
&
\begin{tabular}{c}
Linear search over
\\
a single control
\\
$\theta_{i,j}\in[-\frac{\pi}{4},\frac{\pi}{4})$
\end{tabular}
&
$O(M)$
&
Yes
\\
\hline
$\partial\Gamma_{i,j}^0$
&
The line $\theta_{i,j}=0$
&
\begin{tabular}{c}
Standard
\\
7-point stencil
\\
with $disc=1$ or $2$
\end{tabular}
&
\multirow{3}{*}{
\begin{tabular}{c}
Closed-form 
\\
formula from
\\
first derivative test
\end{tabular}
}
&
\multirow{3}{*}{
$O(1)$
}
&
\multirow{3}{*}{
No
}
\\
\cline{1-3}
$\partial\Gamma_{i,j}^{13}$
&
\begin{tabular}{c}
The boundary
\\
between $\Gamma_{i,j}^1$
\\
and $\Gamma_{i,j}^3$
\end{tabular}
&
\begin{tabular}{c}
Standard
\\
7-point stencil
\\
with $disc=1$
\end{tabular}
&
&
&
\\
\cline{1-3}
$\partial\Gamma_{i,j}^{23}$
&
\begin{tabular}{c}
The boundary
\\
between $\Gamma_{i,j}^2$
\\
and $\Gamma_{i,j}^3$
\end{tabular}
&
\begin{tabular}{c}
Standard
\\
7-point stencil
\\
with $disc=2$
\end{tabular}
&
&
&
\\
\hline
\end{tabular}
\end{center}
\caption{\label{fig:control}
Division of the admissible control set $\Gamma_{i,j}=[0,1]\times[-\frac{\pi}{4},\frac{\pi}{4})$ into regions. For each region, the characterization, discretization, optimization algorithm and the corresponding cost / truncation error of the optimization algorithm are listed.}
\end{figure}

Since the semi-Lagrangian wide stencil discretization of $(\delta_{zz} u)_{i,j}$ and $(\delta_{ww} u)_{i,j}$ in (\ref{eq:disc_wide2}) depends on the control $\theta_{i,j}$, there is no simple closed-form formula to evaluate the optimal $(a_{i,j}^{(k)},\theta_{i,j}^{(k)})$ directly. In this case, one typical approach is to use bilinear search algorithm for the optimization problem. More specifically, consider the optimization problem at a grid point $\mathbf{x}_{i,j}$. We discretize the continuous admissible control set $\Gamma_{i,j}=[0,1]\times [-\frac{\pi}{4},\frac{\pi}{4})$ into an $M\times M$ discrete set, denoted as $\Gamma_{i,j}^h$. We note that the discretization of the control set introduces additional truncation error. In order to maintain consistency, we must let $M\to\infty$ as $h\to 0$. A typical choice of $M$ is $M=N$. Then we compute the $M\times M$ values of the objective function $\mathcal{L}_{i,j}(a_{i,j},\theta_{i,j};u_h^{(k)})$ with $(a_{i,j},\theta_{i,j})\in\Gamma_{i,j}^h$ and then find the global maximal value, which gives the optimal $(a_{i,j}^{(k)},\theta_{i,j}^{(k)})$. However, the computational cost of the bilinear search per grid point $\mathbf{x}_{i,j}$ is $O(M^2)$. Furthermore, if we denote the total number of grid points as $\#\Omega=N^2$, then the computational cost on the entire computational domain $\Omega$ is as high as $O(M^2\#\Omega)$, or $O(\#\Omega^2)$ if we choose $M=N$.

In order to speed up computation for the optimal controls, we divide the continuous admissible control set $\Gamma_{i,j}=[0,1]\times [-\frac{\pi}{4},\frac{\pi}{4})$ into six regions, as shown in Figure \ref{fig:control}\footnote{It is unnecessary to consider the line $a_{i,j}=\frac{1}{2}$, since the objective function is a constant on this line. Also it is unnecessary to consider the line $\theta_{i,j} = \pm\frac{\pi}{4}$, since $\mathcal{L}_{a,\theta} \, u = \mathcal{L}_{1-a,\theta+\frac{\pi}{2}} \, u$ indicates that $\theta_{i,j} = \pm\frac{\pi}{4}$ is indeed an interior part of $\Gamma_{i,j}^1$ and $\Gamma_{i,j}^2$.}.
The six regions are identified by whether a control pair $(a_{i,j},\theta_{i,j})$ satisfies (\ref{eq:narrow_cond1}), or (\ref{eq:narrow_cond2}), or neither.
Our approach is to find the optimal control pair within each region, and then find the global optimal control pair among the six regional optimal control pairs. This approach enables us to make full use of the analytical property of each region, and to improve the optimization algorithm within each region and eventually on the entire admissible control set $\Gamma_{i,j}$.

Using our approach, the computational cost of solving the optimization problem on $\Gamma_{i,j}$ can be significantly reduced. More specifically, if the standard 7-point stencil discretization can be applied monotonically on all or most of the grid points, then the computational cost is $O(1)$ per grid point and $O(\#\Omega)$ on the entire computational domain. In general, the computational cost is at most $O(M)$ per grid point and at most $O(M\#\Omega)$ on the entire computational domain. For the typical choice $M=N$, the total computational cost of solving the optimization problem is $O(\#\Omega^{3/2})$.

To explain the details of the regional optimization, consider again a given grid point $\mathbf{x}_{i,j}$ and its associated control set $\Gamma_{i,j}$. In Region $\Gamma_{i,j}^1$, $\Gamma_{i,j}^2$, $\partial\Gamma_{i,j}^0$, $\partial\Gamma_{i,j}^{13}$ and $\partial\Gamma_{i,j}^{23}$ (see Figure \ref{fig:control}), where the standard 7-point stencil discretization (\ref{eq:disc_narrow}) is applied, the discretization of $(\delta_{xx}u)_{i,j}$, $(\delta_{yy}u)_{i,j}$, $(\delta_{xy}^{[1]}u)_{i,j}$ and $(\delta_{xy}^{[2]}u)_{i,j}$ does not depend on the controls $(a_{i,j},\theta_{i,j})$. This enables us to derive a closed-form formula for the optimal controls in these regions using first derivative test, which can be evaluated by $O(1)$ operation and introduces no additional truncation error. More specifically:

\underline{Region $\Gamma_{i,j}^1$.} The region is defined where Condition (\ref{eq:narrow_cond1}) is satisfied. Equation (\ref{eq:disc_narrow}) gives the objective function in $\Gamma_{i,j}^1$:
\begin{equation}
\label{eq:regionanalysis1_1}
\renewcommand*{\arraystretch}{1.2}
\begin{array}{rl}
\mathcal{L}_{i,j} (a_{i,j},\theta_{i,j})
&
= -\alpha_{11} (a_{i,j},\theta_{i,j})
(\delta_{xx} u)_{i,j}
- 2 \alpha_{12} (a_{i,j},\theta_{i,j})
(\delta_{xy}^{[1]} u)_{i,j}
\\
&
- \alpha_{22} (a_{i,j},\theta_{i,j})
(\delta_{yy} u)_{i,j}
+ 2\sqrt{a_{i,j}(1-a_{i,j})f_{i,j}},
\end{array}
\end{equation}
where we only manifest the dependency of $\mathcal{L}_{i,j}$ on the control pair $(a_{i,j},\theta_{i,j})$.
One can verify that this function is smooth in $(a_{i,j},\theta_{i,j})\in \Gamma_{i,j}^1$, concave in $a_{i,j}\in[0,1]$, and its stationary point in $\Gamma_{i,j}^1$ is unique, if it exists. This allows us to use first derivative test to find the optimal control pair in $\Gamma_{i,j}^1$:
\begin{equation}
\label{eq:regionanalysis1_2}
\theta_{i,j}^* = \frac{1}{2} \arctan
\frac {2(\delta_{xy}^{[1]} u)_{i,j}} {(\delta_{yy} u)_{i,j}-(\delta_{xx} u)_{i,j}},
\quad
a_{i,j}^* =
\frac{1}{2}\left(1-\frac{\lambda_{i,j}}{\sqrt{4f_{i,j}+\lambda_{i,j}^2}}\right),
\end{equation}
where
$
\lambda_{i,j} \equiv [(\delta_{xx} u)_{i,j}-(\delta_{yy} u)_{i,j}] \cos 2\theta_{i,j}^* 
-2(\delta_{xy}^{[1]} u)_{i,j} \sin 2\theta_{i,j}^*
$.
With a slight abuse of notations, here and for the rest of Section \ref{subsec:controls}, we use $(a_{i,j}^*,\theta_{i,j}^*)$ to denote the the regional (rather than global) optimal control pair at $\mathbf{x}_{i,j}$. We note that $(a_{i,j}^*,\theta_{i,j}^*)$ given by (\ref{eq:regionanalysis1_2}) may not necessarily be inside $\Gamma_{i,j}^1$. If $(a_{i,j}^*,\theta_{i,j}^*)\in\Gamma_{i,j}^1$, then the maximum in $\Gamma_{i,j}^1$ must occur at $(a_{i,j}^*,\theta_{i,j}^*)$. Otherwise, the maximum must occur on the boundary of $\Gamma_{i,j}^1$, or more specifically, either $\partial\Gamma_{i,j}^0$ or $\partial\Gamma_{i,j}^{13}$, which will be investigated separately.

\underline{Region $\Gamma_{i,j}^2$.} The region is defined where Condition (\ref{eq:narrow_cond2}) is satisfied. The analysis for solving the optimization problem in $\Gamma_{i,j}^2$ is the same as $\Gamma_{i,j}^1$, except that $(\delta_{xy}^{[1]} u)_{i,j}$ in (\ref{eq:regionanalysis1_1}), (\ref{eq:regionanalysis1_2}) is replaced by $(\delta_{xy}^{[2]} u)_{i,j}$.

\underline{Region $\partial\Gamma_{i,j}^0$.} This is the line $\theta_{i,j}=0$ which separates Region $\Gamma_{i,j}^1$ and $\Gamma_{i,j}^2$. The objective function in $\partial\Gamma_{i,j}^0$ can be found in (\ref{eq:regionanalysis1_1}), where $\alpha_{12}=0$ and thus the cross derivative term disappears. The optimal control pair in $\partial\Gamma_{i,j}^0$ is simply
\begin{equation}
\label{eq:regionanalysis0_2}
\theta_{i,j}^* = 0,
\quad
a_{i,j}^* =
\frac{1}{2}\left[
1-\frac{(\delta_{xx} u)_{i,j}-(\delta_{yy} u)_{i,j}}
{\sqrt{4f_{i,j}+((\delta_{xx} u)_{i,j}-(\delta_{yy} u)_{i,j})^2}}
\right].
\end{equation}

\underline{Region $\partial\Gamma_{i,j}^{13}$.} This is the boundary between Region $\Gamma_{i,j}^1$ and $\Gamma_{i,j}^3$.
If we define the signs of $a_{i,j}-\frac{1}{2}$ and $\theta_{i,j}$ as
\begin{equation}
s_{a-1/2} \equiv \left\{
\renewcommand*{\arraystretch}{1.2}
\begin{array}{ll}
-1, & a_{i,j}-\frac{1}{2} < 0,
\\
1, & a_{i,j}-\frac{1}{2} > 0,
\end{array}
\right.
\quad
s_\theta \equiv \left\{
\renewcommand*{\arraystretch}{1}
\begin{array}{ll}
-1, & \theta_{i,j} < 0,
\\
1, & \theta_{i,j} > 0,
\end{array}
\right.
\end{equation}
then $\partial\Gamma_{i,j}^{13}$ contains two sections: (i) $(s_{a-1/2},s_\theta)=(1,-1)$, (ii) $(s_{a-1/2},s_\theta)=(-1,1)$.

The objective function on $\partial\Gamma_{i,j}^{13}$ is the same as (\ref{eq:regionanalysis1_1}). First derivative test shows that for each of the two sections of $\partial\Gamma_{i,j}^{13}$, the maximum of the objective function occurs at
\begin{equation}
\theta_{i,j}^* =
\frac{s_\theta}{2} \arctan \left(
1 + \gamma_{i,j}^2 - \gamma_{i,j} \sqrt{2+\gamma_{i,j}^2}
\right),
\end{equation}
where
$
\gamma_{i,j} \equiv
\frac{s_{a-1/2}}{2\sqrt{f_{i,j}}}
\left( (\delta_{yy} u)_{i,j} - (\delta_{xx} u)_{i,j} - 2 s_\theta (\delta_{xy}^{[1]} u)_{i,j} \right)
$.
The corresponding $a_{i,j}^*\in\partial\Gamma_{i,j}^{13}$, derived from Condition (\ref{eq:narrow_cond1}), is
\begin{equation}
a_{i,j}^* =
\frac{1}{2} \left(
1 + \frac{s_{a-1/2}}{ \sqrt{2}\sin (2 |\theta_{i,j}^*|+\frac{\pi}{4} ) }
\right).
\end{equation}

\underline{Region $\partial\Gamma_{i,j}^{23}$.} This is the boundary between Region $\Gamma_{i,j}^2$ and $\Gamma_{i,j}^3$. The analysis on $\partial\Gamma_{i,j}^{23}$ is then the same as $\partial\Gamma_{i,j}^{13}$, except that the two sections of $\partial\Gamma_{i,j}^{23}$ become (i) $(s_{a-1/2},s_\theta)=(1,1)$, (ii) $(s_{a-1/2},s_\theta)=(-1,-1)$, and $(\delta_{xy}^{[1]} u)_{i,j}$ is replaced by $(\delta_{xy}^{[2]} u)_{i,j}$.

\underline{Region $\Gamma_{i,j}^3$.} The region is defined where neither (\ref{eq:narrow_cond1}) nor (\ref{eq:narrow_cond2}) is satisfied. The semi-Lagrangian wide stencil discretization (\ref{eq:disc_wide2}) is applied. Accordingly, the objective function reads
\begin{equation}
\label{eq:regionanalysis_3_1}
\mathcal{L}_{i,j} (a_{i,j},\theta_{i,j})
= -a_{i,j} \,
(\delta_{zz} u)_{i,j}
- ( 1-a_{i,j} ) \,
(\delta_{ww} u)_{i,j}
+ 2\sqrt{
a_{i,j} ( 1-a_{i,j} ) f_{i,j}
}.
\end{equation}
The dependency of the discretization of $(\delta_{zz} u)_{i,j}$ and $(\delta_{ww} u)_{i,j}$ on the control $\theta_{i,j}$ prevents us from deriving a closed-form formula for $\theta_{i,j}^*\in\Gamma_{i,j}^3$.
However, we note that the discretization of $(\delta_{zz} u)_{i,j}$ and $(\delta_{ww} u)_{i,j}$ is independent of the control $a_{i,j}$, which implies that a two dimensional bilinear search on the controls $(a_{i,j},\theta_{i,j})\in\Gamma_{i,j}$ can be reduced to a one-dimensional linear search on the single control $\theta_{i,j}\in [-\frac{\pi}{4},\frac{\pi}{4})$.

One can prove that the regional optimal control pair $(a_{i,j}^*,\theta_{i,j}^*) \in \Gamma_{i,j}^3$ must sit on the following parametrized curve
\begin{equation}
\label{eq:regionanalysis_3_2}
a_{i,j}(\theta_{i,j}) =
\left\{
\renewcommand*{\arraystretch}{1.2}
\begin{array}{ll}
\mathcal{C}^{\lambda}(\theta_{i,j}),
&
\text{ if }
\mathcal{C}^{\lambda}(\theta_{i,j}) \leq \mathcal{C}^{-}(\theta_{i,j})
\text{ or }
\mathcal{C}^{\lambda}(\theta_{i,j}) \geq \mathcal{C}^{+}(\theta_{i,j}),
\\
\mathcal{C}^{-}(\theta_{i,j}),
&
\text{ if }
\mathcal{C}^{-}(\theta_{i,j}) \leq \mathcal{C}^{\lambda}(\theta_{i,j}) \leq \frac{1}{2},
\\
\mathcal{C}^{+}(\theta_{i,j}),
&
\text{ if }
\frac{1}{2} \leq \mathcal{C}^{\lambda}(\theta_{i,j}) \leq \mathcal{C}^{+}(\theta_{i,j}).
\end{array}
\right.
\end{equation}
Here the curves
\begin{equation}
\mathcal{C}^{\pm}(\theta_{i,j})
\equiv \frac{1}{2}\left( 1 \pm \frac{1}{ \sqrt{2}\sin (2 |\theta_{i,j}|+\frac{\pi}{4} ) } \right),
\quad
\theta_{i,j} \in [-\frac{\pi}{4},\frac{\pi}{4})
\end{equation}
are given by Condition (\ref{eq:narrow_cond1}) and (\ref{eq:narrow_cond2}). The other curve
\begin{equation}
\mathcal{C}^{\lambda}(\theta_{i,j})
\equiv \frac{1}{2}\left[
1-\frac{(\delta_{zz} u)_{i,j} - (\delta_{ww} u)_{i,j}}
{\sqrt{4f_{i,j}+((\delta_{zz} u)_{i,j} - (\delta_{ww} u)_{i,j})^2}}\right],
\quad
\theta_{i,j} \in [-\frac{\pi}{4},\frac{\pi}{4}),
\end{equation}
where the directions of $z$ and $w$ depend on $\theta_{i,j}$, is given by the first derivative test of (\ref{eq:regionanalysis_3_1}) with respect to $a_{i,j}$.
Taking the parametrization (\ref{eq:regionanalysis_3_2}) into account, the objective function (\ref{eq:regionanalysis_3_1}) becomes $\mathcal{L}_{i,j}(a_{i,j}(\theta_{i,j}),\theta_{i,j})$, which is a function of the single control variable $\theta_{i,j}\in [-\frac{\pi}{4},\frac{\pi}{4})$.
This motivates us to discretize the set
$
[-\frac{\pi}{4},\frac{\pi}{4})
$
into an $M$-element control set,
and perform a linear search for the maximum of the parametrized objective function
$
\mathcal{L}_{i,j}(a_{i,j}(\theta_{i,j}),\theta_{i,j})
$
over the single control variable
$
\theta_{i,j}\in[-\frac{\pi}{4},\frac{\pi}{4})
$.
The computational cost is thus reduced to $O(M)$.

Once we obtain the six regional optimal control pairs and their corresponding objective function values, we search within them for the global optimal control pair on $\Gamma_{i,j}$. This step is cheap and straightforward.

As a side remark, in Section 8 of \cite{feng2016convergent}, the authors discretize $\theta$ with 64 different angles, regardless of the mesh size $N$. Indeed, if $\theta$ is discretized with fixed number of angles, then the numerical scheme in \cite{feng2016convergent} is no longer consistent in theory. This is different from our scheme, where $\theta$ is discretized with $M$ angles, and we choose $M = N$ such that consistency is still maintained.


\section{Convergence Analysis}
\label{sec:converge}

As proved by Barles and Souganidis \cite{barles1991convergence}, there are four sufficient conditions for the numerical scheme of a nonlinear PDE to converge in the viscosity sense. In this section, we will prove that our numerical scheme does fulfill all the four requirements and is therefore guaranteed to converge to the viscosity solution of (\ref{eq:MAE2}).

\subsection{Consistency}
\label{subsec:consistency}

One sufficient condition for convergence is consistency. Intuitively, consistency claims that the discretized equation of a PDE should be close to the continuous PDE. In particular, when $h\to 0$, the discretized equation should converge to the PDE. The main result of this subsection is to prove that our numerical scheme is consistent in the viscosity sense:

\begin{lemma}[Consistency]
\label{lm:consistency}
For the Monge-Amp\`ere equation
$\mathcal{F}
\left(
\mathbf{x}, u(\mathbf{x}), D^2 u(\mathbf{x})
\right) = 0$,
the numerical scheme
$\mathcal{F}_h
\left(
\mathbf{x}_{i,j},
u_h
\right) = 0$,
given in (\ref{eq:discretecomplete1})-(\ref{eq:discretecomplete2}), is consistent in the viscosity sense.
More specifically, for any function $\varphi(\mathbf{x})\in C^\infty(\overline{\Omega})$ with $\varphi_{i,j} \equiv \varphi(\mathbf{x}_{i,j})$ and
$\varphi_h \equiv
(
\varphi_{1,1},
\varphi_{1,2},
\cdots,
\varphi_{N,N}
)^T
\in \mathbb{R}^{N^2\times 1}$,
for any $\mathbf{\hat{x}}\in\overline{\Omega}$, and for $h$ and $\xi$ that are arbitrary small constants independent of $\mathbf{x}$,
we have
\begin{align}
\renewcommand*{\arraystretch}{1}
\displaystyle\limsup
_{\substack{h\to 0, \, \xi\to 0 \\ \mathbf{x}_{i,j}\to\mathbf{\hat{x}}}}
\mathcal{F}_h
(\mathbf{x}_{i,j}, \varphi_h + \xi)
\leq
\mathcal{F}^*
(\mathbf{\hat{x}}, \varphi(\mathbf{\hat{x}}), D^2\varphi(\mathbf{\hat{x}})),
\\
\displaystyle\liminf
_{\substack{h\to 0, \, \xi\to 0 \\ \mathbf{x}_{i,j}\to\mathbf{\hat{x}}}}
\mathcal{F}_h
(\mathbf{x}_{i,j}, \varphi_h + \xi)
\geq
\mathcal{F}_*
(\mathbf{\hat{x}}, \varphi(\mathbf{\hat{x}}), D^2\varphi(\mathbf{\hat{x}})).
\end{align}
\end{lemma}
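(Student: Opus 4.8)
The plan is to verify the two viscosity-consistency inequalities separately according to where the limit point $\hat{\mathbf x}$ lies: $\hat{\mathbf x}\in\Omega$ versus $\hat{\mathbf x}\in\partial\Omega$. The first observation, which eliminates the constant shift $\xi$ from the interior analysis, is that every finite-difference operator appearing in (\ref{eq:disc_narrow}) and (\ref{eq:disc_wide2}) --- namely $\delta_{xx}$, $\delta_{yy}$, $\delta_{xy}^{[1]}$, $\delta_{xy}^{[2]}$, $\delta_{zz}$ and $\delta_{ww}$ --- has stencil weights summing to zero, hence annihilates constants; consequently $\mathcal{L}_{i,j}(a_{i,j},\theta_{i,j};\varphi_h+\xi)=\mathcal{L}_{i,j}(a_{i,j},\theta_{i,j};\varphi_h)$ at any grid point whose stencil does not reach $\partial\Omega$, and for $\hat{\mathbf x}\in\Omega$ and $h$ small this covers all nearby grid points, so $\xi$ is irrelevant there.

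For $\hat{\mathbf x}\in\Omega$ I would fix an admissible control $(a,\theta)\in\Gamma$ and Taylor-expand at $\mathbf x_{i,j}$. If $(a,\theta)$ lies in the narrow-stencil part of $\Gamma$ (the regions $\Gamma_{i,j}^1$, $\Gamma_{i,j}^2$ and their common boundaries), the standard expansions $(\delta_{xx}\varphi)_{i,j}=\varphi_{xx}(\mathbf x_{i,j})+O(h^2)$, $(\delta_{yy}\varphi)_{i,j}=\varphi_{yy}(\mathbf x_{i,j})+O(h^2)$ and $(\delta_{xy}^{[1]}\varphi)_{i,j}=(\delta_{xy}^{[2]}\varphi)_{i,j}=\varphi_{xy}(\mathbf x_{i,j})+O(h^2)$ give $\mathcal{L}_{i,j}(a,\theta;\varphi_h)=(\mathcal{L}_{a,\theta}\varphi)(\mathbf x_{i,j})+O(h^2\|\varphi\|_{C^4})$. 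If $(a,\theta)\in\Gamma_{i,j}^3$ (wide stencil), I would use that the bilinear interpolation $\mathcal{I}_h\varphi$ reproduces $\varphi$ up to $O(h^2)$ and that the second difference (\ref{eq:derivative_wide1})--(\ref{eq:derivative_wide2}) over the length $\sqrt h$ contributes an $O(h)$ error; combined with the identity (\ref{eq:disc_wide1}) that the rotated operator $-a\,u_{zz}-(1-a)\,u_{ww}+2\sqrt{a(1-a)f}$ equals $\mathcal{L}_{a,\theta}u$, this yields $\mathcal{L}_{i,j}(a,\theta;\varphi_h)=(\mathcal{L}_{a,\theta}\varphi)(\mathbf x_{i,j})+O(h\|\varphi\|_{C^4})$. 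The key point is that in both cases the error constant is uniform over $(a,\theta)\in\Gamma$: the coefficients $\alpha_{11},\alpha_{12},\alpha_{22}$ are bounded by $1$, the rotation angle is bounded, $f_{i,j}=f(\mathbf x_{i,j})\to f(\hat{\mathbf x})$ by continuity of $f$, and $a(1-a)\in[0,\tfrac14]$. Hence $\sup_{(a,\theta)\in\Gamma}\big|\mathcal{L}_{i,j}(a,\theta;\varphi_h)-(\mathcal{L}_{a,\theta}\varphi)(\mathbf x_{i,j})\big|\to0$, and since $|\max p-\max q|\le\sup|p-q|$ the maximum commutes with the limit, giving $\mathcal{F}_h(\mathbf x_{i,j},\varphi_h+\xi)\to\max_{(a,\theta)\in\Gamma}(\mathcal{L}_{a,\theta}\varphi)(\hat{\mathbf x})=\mathcal{F}(\hat{\mathbf x},\varphi(\hat{\mathbf x}),D^2\varphi(\hat{\mathbf x}))$; as $\mathcal{F}$ is continuous on $\Omega$, $\mathcal{F}^*=\mathcal{F}_*=\mathcal{F}$ there and both inequalities follow.

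For $\hat{\mathbf x}\in\partial\Omega$ I would split by the type of the approximating grid points. If $\mathbf x_{i,j}$ is a boundary node, the scheme sets $\mathcal{F}_h(\mathbf x_{i,j},\varphi_h+\xi)=\varphi_{i,j}+\xi-g_{i,j}\to\varphi(\hat{\mathbf x})-g(\hat{\mathbf x})$, which lies between $\mathcal{F}_*(\hat{\mathbf x},\varphi(\hat{\mathbf x}),D^2\varphi(\hat{\mathbf x}))$ and $\mathcal{F}^*(\hat{\mathbf x},\varphi(\hat{\mathbf x}),D^2\varphi(\hat{\mathbf x}))$, because on $\partial\Omega$ the semicontinuous envelopes are $\mathcal{F}^*=\max\{\,\text{HJB residual},\,\varphi-g\,\}$ and $\mathcal{F}_*=\min\{\,\text{HJB residual},\,\varphi-g\,\}$, so $\varphi-g$ is one of the two arguments of the $\max/\min$. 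If $\mathbf x_{i,j}$ is an interior node approaching $\partial\Omega$, then for small $h$ either all of its stencils fit inside $\overline\Omega$ without truncation --- whence the interior argument above delivers the limit $\max_{(a,\theta)}(\mathcal{L}_{a,\theta}\varphi)(\hat{\mathbf x})$, again one of the two arguments of the envelope, hence sandwiched --- or some wide-stencil arm is shortened onto $\partial\Omega$ via (\ref{eq:derivative_wide1_bc1}). In the truncated case I would use that the shrunk length forces the relocated stencil point to converge to $\hat{\mathbf x}$ and that the Dirichlet term $g$ enters through a first-difference quotient, and then control the one-sided limits against the $\max/\min$ structure of $\mathcal{F}^*$ and $\mathcal{F}_*$ at the boundary.

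The \emph{main obstacle} is precisely this last situation --- near-boundary interior nodes at which a semi-Lagrangian arm must be truncated to reach $\partial\Omega$. There the local truncation error evaluated at a test function that does not match the Dirichlet data $g$ need not be $o(1)$, so one cannot simply Taylor-expand; the argument must be organized around the relaxed-operator identities $\mathcal{F}^*=\max\{\text{HJB residual},\,\varphi-g\}$ and $\mathcal{F}_*=\min\{\text{HJB residual},\,\varphi-g\}$ on $\partial\Omega$. By contrast the interior case is routine once the two elementary facts are in place: the discrete operators annihilate constants so $\xi$ drops out, and the $O(h)$/$O(h^2)$ consistency errors are uniform over the compact control set $\Gamma$, so that the pointwise maximum passes to the limit.
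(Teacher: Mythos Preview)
Your interior argument is the same as the paper's: the paper packages precisely those Taylor-expansion estimates (with explicit rates $O(h^2)$ for the 7-point stencil, $O(h)$ for the untruncated wide stencil, $O(\sqrt h)$ for the truncated arm) as a separate ``local consistency'' result, Lemma~\ref{lm:localconsistency}, and declares it a sufficient condition for Lemma~\ref{lm:consistency}; it uses the same $|\max p-\max q|\le\sup|p-q|$ step and also disposes of $\xi$ at the outset by replacing $\varphi$ with $\varphi+\xi$.

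Where you depart from the paper is at the boundary. The paper does not split by the location of $\hat{\mathbf x}$ and never invokes the envelope identities $\mathcal F^{*}=\max\{\text{HJB},\varphi-g\}$, $\mathcal F_{*}=\min\{\text{HJB},\varphi-g\}$; in its truncated-stencil estimate it tacitly evaluates the relocated boundary arm at $\varphi$ rather than at the Dirichlet datum $g$ (see the second displayed computation in the proof of Lemma~\ref{lm:localconsistency}), which delivers $O(\sqrt h)$ directly but sidesteps exactly the difficulty you single out. Your decomposition is the more rigorous framework for a genuine boundary argument, and your identification of the truncated-arm case as the only non-routine one is accurate. What you leave unfinished is the actual case analysis: with the scheme as written (boundary values fixed to $g$ inside $F_h$), one must argue by sign of $\varphi(\hat{\mathbf x})-g(\hat{\mathbf x})$ and exploit that the maximization over $a\in[0,1]$ can always put zero weight on a blowing-up directional arm, so that the $\limsup/\liminf$ land on one of the two branches of $\mathcal F^{*}$, $\mathcal F_{*}$. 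The paper does not carry this out either; it simply asserts that local consistency suffices.
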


In practise, we prove a sufficient condition for consistency, called local consistency, as follows:

\begin{lemma}[Local consistency]
\label{lm:localconsistency}
Under the assumptions in Lemma \ref{lm:consistency}, we have
\begin{equation}
\label{eq:localconsistency}
\renewcommand*{\arraystretch}{1.3}
\begin{array}{l}
\mathcal{F}
(\mathbf{x}_{i,j}, \varphi(\mathbf{x}_{i,j}), D^2 \varphi(\mathbf{x}_{i,j}))
- \mathcal{F}_h
(\mathbf{x}_{i,j}, \varphi_h + \xi)
\\
= \left\{
\renewcommand*{\arraystretch}{1}
\begin{array}{ll}
O(h^2) + O(\xi),
&
\text{standard 7-point stencil},
\\
O(h) + O(\xi),
&
\text{semi-Lagrangian wide stencil, with all the 4}
\\
&
\text{wide stencil points } \in\Omega,
\\
O(\sqrt{h}) + O(\xi),
&
\text{semi-Lagrangian wide stencil, otherwise}.
\end{array}
\right.
\end{array}
\end{equation}
\end{lemma}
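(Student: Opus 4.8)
The plan is to prove the \emph{local} truncation‑error estimate (\ref{eq:localconsistency}) directly by Taylor expansion, after reducing the nonlinear scheme to its linear building blocks; Lemma~\ref{lm:consistency} then follows from Lemma~\ref{lm:localconsistency} by the usual passage to semicontinuous envelopes. Fix $\varphi\in C^\infty(\overline\Omega)$ and a grid point $\mathbf{x}_{i,j}$. If $\mathbf{x}_{i,j}\in\partial\Omega$ the scheme is $u_{i,j}-g_{i,j}$, so the left side of (\ref{eq:localconsistency}) is $(\varphi_{i,j}-g_{i,j})-(\varphi_{i,j}+\xi-g_{i,j})=-\xi=O(\xi)$ and there is nothing more to do. For $\mathbf{x}_{i,j}\in\Omega$, both $\mathcal{F}(\mathbf{x}_{i,j},\varphi(\mathbf{x}_{i,j}),D^2\varphi(\mathbf{x}_{i,j}))$ and $\mathcal{F}_h(\mathbf{x}_{i,j},\varphi_h+\xi)$ are maxima over the same compact set $\Gamma=[0,1]\times[-\tfrac\pi4,\tfrac\pi4)$ of, respectively, $\mathcal{L}_{a,\theta}\varphi(\mathbf{x}_{i,j})$ (see (\ref{eq:HJBdiff})) and its finite-difference analogue $\mathcal{L}_{i,j}(a,\theta;\varphi_h+\xi)$. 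Using $|\max_\alpha F(\alpha)-\max_\alpha G(\alpha)|\le\sup_\alpha|F(\alpha)-G(\alpha)|$, the estimate reduces to bounding the pointwise truncation error of the linear operator \emph{uniformly} in $(a,\theta)\in\Gamma$; moreover the source term $2\sqrt{a(1-a)f_{i,j}}$ appears verbatim on both sides and cancels, so one only has to compare $\varphi_{xx},\varphi_{yy},\varphi_{xy}$ (resp. the rotated $\varphi_{zz},\varphi_{ww}$) with their difference quotients. The three lines of (\ref{eq:localconsistency}) correspond to which discretization is active at the relevant control: the $7$-point formula on $\Gamma_{i,j}^1,\Gamma_{i,j}^2,\partial\Gamma_{i,j}^0,\partial\Gamma_{i,j}^{13},\partial\Gamma_{i,j}^{23}$, and the semi-Lagrangian wide stencil on $\Gamma_{i,j}^3$ (with or without boundary truncation).

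For a control at which the standard $7$-point discretization is used, I would Taylor-expand $\varphi$ to fourth order about $\mathbf{x}_{i,j}$ (legitimate since $D^4\varphi$ is bounded on the compact set $\overline\Omega$), obtaining $(\delta_{xx}\varphi)_{i,j}=\varphi_{xx}+O(h^2)$, $(\delta_{yy}\varphi)_{i,j}=\varphi_{yy}+O(h^2)$, and the short cancellation the stencils (\ref{eq:derivative_narrow1}) and (\ref{eq:derivative_narrow2}) are designed for, $(\delta_{xy}^{[1]}\varphi)_{i,j}=\varphi_{xy}+O(h^2)$ and $(\delta_{xy}^{[2]}\varphi)_{i,j}=\varphi_{xy}+O(h^2)$, with every $O(h^2)$ constant controlled by $\|D^4\varphi\|_{L^\infty(\overline\Omega)}$. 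Since $|\alpha_{11}|,|\alpha_{12}|,|\alpha_{22}|\le 1$ on $\Gamma$, the linear combination inherits a uniform $O(h^2)$ error; adding the constant $\xi$ to the interior grid values leaves every second difference unchanged, so interior nodes give a clean $O(h^2)$, while stencil legs that land on $\partial\Omega$ (where the fixed datum $g$, not $g+\xi$, enters) are absorbed into the $O(\xi)$ term. This gives the first line of (\ref{eq:localconsistency}).

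For a control in $\Gamma_{i,j}^3$, recall that the rotation by $\phi_{i,j}=-\theta_{i,j}$ turns the operator into $-a\,\varphi_{zz}-(1-a)\,\varphi_{ww}+2\sqrt{a(1-a)f_{i,j}}$. When all four wide-stencil points lie in $\overline\Omega$, I would combine the standard bilinear-interpolation bound $|\mathcal{I}_h\varphi-\varphi|\le C\,h^2\,\|D^2\varphi\|_{L^\infty(\overline\Omega)}$ on the enclosing $h\times h$ cell with the Taylor expansion of the central difference of step $\sqrt h$: the central difference equals $\varphi_{zz}+O\bigl((\sqrt h)^{2}\bigr)=\varphi_{zz}+O(h)$, and replacing exact nodal values by their interpolants perturbs the numerator by $O(h^2)$, hence the quotient by $O(h^2)/h=O(h)$; so $(\delta_{zz}\varphi)_{i,j}=\varphi_{zz}+O(h)$ and likewise for $\varphi_{ww}$, uniformly in the rotation angle by compactness of $[-\tfrac\pi4,\tfrac\pi4]$. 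As bilinear interpolation reproduces constants, $\xi$ again cancels in these quotients, giving the second line $O(h)+O(\xi)$. When a stencil point is relocated to $\mathbf{x}_{i,j}+\eta_1(\mathbf{e}_z)_{i,j}\in\partial\Omega$ with $0<\eta_1<\sqrt h$, I would Taylor-expand the two unequal one-sided differences in (\ref{eq:derivative_wide1_bc1}): the first-order parts cancel, the $\varphi_{zz}$ terms combine to $\tfrac12(\eta_1+\sqrt h)\varphi_{zz}$, and the remainder is $O(\eta_1^2)+O(h)=O(h)$ (using $\eta_1<\sqrt h$); dividing by the averaged spacing $\tfrac12(\eta_1+\sqrt h)\ge\tfrac12\sqrt h$ therefore yields $\varphi_{zz}+O(h)/\sqrt h=\varphi_{zz}+O(\sqrt h)$, which is the third line.

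I expect the bulk of the work to lie in two places. First, one must verify that \emph{all} the implied constants are uniform over $\Gamma$, so that the $\max$-reduction above is legitimate and so that the maximizing controls of $\mathcal{F}$ and of $\mathcal{F}_h$ lie in compatible regions as $h\to0$; this follows from compactness of $\Gamma$, the bounds on $\alpha_{11},\alpha_{12},\alpha_{22},a$ and continuity of $f$, but it is the point that needs care. Second, and most delicate, is the truncated wide stencil near $\partial\Omega$: a priori the shrunk leg $\eta_1$ can be far smaller than $\sqrt h$, and the reason no $1/\eta_1$ blow-up occurs is precisely that (\ref{eq:derivative_wide1_bc1}) divides by $\tfrac12(\eta_1+\sqrt h)$ rather than by $\eta_1$, so the $O(h)$ numerator remainder never degrades past $O(\sqrt h)$. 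Tracking how the constant $\xi$ threads through the scheme — it cancels in every interpolated second difference, since interpolation is exact on constants, but passes through unchanged in the Dirichlet contributions — is the routine bookkeeping that produces the ubiquitous $+O(\xi)$.
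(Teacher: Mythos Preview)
Your proposal is correct and follows essentially the same route as the paper: reduce via $|\max F-\max G|\le\sup|F-G|$, then Taylor-expand each stencil, obtaining $O(h^2)$ for the 7-point formulas, $O(h)$ for the interior wide stencil (the $O(h^2)$ bilinear-interpolation error divided by the denominator $h$), and $O(\sqrt h)$ for the truncated wide stencil by exploiting the averaged denominator $\tfrac12(\eta_1+\sqrt h)\ge\tfrac12\sqrt h$. The paper dispatches $\xi$ more tersely (simply substitute $\varphi\mapsto\varphi+\xi$ and note second differences annihilate constants), and appends one remark you omit: on $\Gamma_{i,j}^3$ the control $\theta$ is found by linear search over $M=O(N)$ discrete angles, which contributes an additional $O(h)$ truncation error already absorbed by the wide-stencil bounds.
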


\begin{proof}
We note that the proof with $\xi=0$ is equivalent to the proof with a general $\xi$. Such equivalence can be easily verified if we substitute $\varphi$ by $\varphi+\xi$ in the following proof. Hence, we will only prove the case where $\xi=0$.

\underline{Truncation error of the standard 7-point stencil discretization.}
Suppose the standard 7-point stencil discretization is applied at $\mathbf{x}_{i,j}$. It is easy to show that the truncation errors for $(\delta_{xx}\varphi)_{i,j}$, $(\delta_{yy}\varphi)_{i,j}$, $(\delta_{xy}^{[1]}\varphi)_{i,j}$ and $(\delta_{xy}^{[2]}\varphi)_{i,j}$ are all $O(h^2)$. Hence, the local truncation error of the discrete linear equation (\ref{eq:discretecomplete2}) is then
$
\mathcal{L}_{a(\mathbf{x}_{i,j}),\theta(\mathbf{x}_{i,j})}
\varphi(\mathbf{x}_{i,j})
- \mathcal{L}_h
(\mathbf{x}_{i,j}; a_{i,j}, \theta_{i,j}; \varphi_h)
= O(h^2)
$.
Furthermore, the local truncation error of the finite difference scheme at $\mathbf{x}_{i,j}$ is
\begin{equation}
\label{eq:localconsistency_narrow}
\renewcommand*{\arraystretch}{1}
\begin{array}{l}
\left|
\;
\mathcal{F}
(\mathbf{x}_{i,j}, \varphi(\mathbf{x}_{i,j}), D^2 \varphi(\mathbf{x}_{i,j}))
- \mathcal{F}_h
(\mathbf{x}_{i,j}, \varphi_h)
\;
\right|
\\
=
\left|
\;
\displaystyle\max_{
(a(\mathbf{x}_{i,j}),\theta(\mathbf{x}_{i,j}))
\in\Gamma
}
\mathcal{L}_{a(\mathbf{x}_{i,j}),\theta(\mathbf{x}_{i,j})}
\varphi(\mathbf{x}_{i,j})
-
\displaystyle\max_{
(a_{i,j},\theta_{i,j})
\in\Gamma
}
\mathcal{L}_h
(\mathbf{x}_{i,j}; a_{i,j}, \theta_{i,j}; \varphi_h)
\right|
\\
\leq
\;
\displaystyle\max_{
(a_{i,j},\theta_{i,j})
\in\Gamma
}
\left|
\,
\mathcal{L}_{a_{i,j},\theta_{i,j}}
\varphi(\mathbf{x}_{i,j})
- \mathcal{L}_h
(\mathbf{x}_{i,j}; a_{i,j}, \theta_{i,j}; \varphi_h)
\,
\right|
= O(h^2).
\end{array}
\end{equation}
The inequality comes from
$
\left|
\displaystyle\max_{x} f(x) - \displaystyle\max_{x} g(x)
\right|
\leq
\displaystyle\max_{x} | f(x) - g(x) |
$.

\underline{Truncation error of semi-Lagrangian wide stencil discretization.}
\hfill
Suppose semi-Lagrangian wide stencil discretization is applied at $\mathbf{x}_{i,j}$. We focus on the truncation error for $(\delta_{zz}\varphi)_{i,j}$ only and analyze three cases. The first case is that both stencil points of $(\delta_{zz}\varphi)_{i,j}$ are in the computational domain. The expression for $(\delta_{zz}\varphi)_{i,j}$ is given by (\ref{eq:derivative_wide1}). The truncation error for $(\delta_{zz}\varphi)_{i,j}$ is then
\begin{equation*}
\renewcommand*{\arraystretch}{1.2}
\begin{array}{l}
\varphi_{zz}(\mathbf{x}_{i,j})
- (\delta_{zz}\varphi)_{i,j}
\\
=
\varphi_{zz}(\mathbf{x}_{i,j})
-
\dfrac{
\left.
\mathcal{I}_h \varphi
\right|_{\mathbf{x}_{i,j}+\sqrt{h}(\mathbf{e}_z)_{i,j}}
-2\varphi_{i,j}
+\left.
\mathcal{I}_h \varphi
\right|_{\mathbf{x}_{i,j}-\sqrt{h}(\mathbf{e}_z)_{i,j}}
}
{h}
\\
=
\varphi_{zz}(\mathbf{x}_{i,j})
-
\dfrac{
\varphi(\mathbf{x}_{i,j}+\sqrt{h}(\mathbf{e}_z)_{i,j})
-2\varphi(\mathbf{x}_{i,j})
+\varphi(\mathbf{x}_{i,j}-\sqrt{h}(\mathbf{e}_z)_{i,j})
+O(h^2)
}
{h}
\\
=
O(h)+O(h)
=
O(h).
\end{array}
\end{equation*}
From the first to the second line we have used the fact that the truncation error of the bilinear interpolation is $O(h^2)$.

Now we consider another case, where one of the stencil points of $(\delta_{zz}\varphi)_{i,j}$ falls outside the computational domain and is thus relocated. Without loss of generality, let us assume again that $\mathbf{x}_{i,j}+\eta_1 (\mathbf{e}_z)_{i,j}\in \partial\Omega$ is the relocated point. The expression for $(\delta_{zz}\varphi)_{i,j}$ is given by (\ref{eq:derivative_wide1_bc1}). The truncation error for $(\delta_{zz}\varphi)_{i,j}$ is then
\begin{equation*}
\renewcommand*{\arraystretch}{1.2}
\begin{array}{l}
\varphi_{zz}(\mathbf{x}_{i,j})
- (\delta_{zz}\varphi)_{i,j}
\\
=
\varphi_{zz}(\mathbf{x}_{i,j})
- \dfrac{
\frac{
\varphi(\mathbf{x}_{i,j}+\eta_1 (\mathbf{e}_z)_{i,j})
-\varphi_{i,j}
}
{\eta_1}
- \frac{
\varphi_{i,j}
- \left.
\mathcal{I}_h \varphi
\right|_{\mathbf{x}_{i,j}-\sqrt{h}(\mathbf{e}_z)_{i,j}}
}
{\sqrt{h}}
}
{
\frac{\eta_1+\sqrt{h}}{2}
}
\\
=
\varphi_{zz}(\mathbf{x}_{i,j})
- \dfrac{
\frac{
\varphi(\mathbf{x}_{i,j}+\eta_1 (\mathbf{e}_z)_{i,j})
-\varphi(\mathbf{x}_{i,j})
}
{\eta_1}
- \frac{
\varphi(\mathbf{x}_{i,j})
- \varphi(\mathbf{x}_{i,j}-\sqrt{h}(\mathbf{e}_z)_{i,j})
}
{\sqrt{h}}
+ O(h^2)
}
{
\frac{\eta_1+\sqrt{h}}{2}
}
\\
=
O(\sqrt{h}-\eta_1)
+ O\left(
\frac{h^2}{\sqrt{h}\frac{\eta_1+\sqrt{h}}{2}}
\right)
=
O(\sqrt{h}).
\end{array}
\end{equation*}

There is one more case, where $\mathbf{x}_{i,j}+\eta_1 (\mathbf{e}_z)_{i,j}\in \partial\Omega$ and $\mathbf{x}_{i,j}-\eta_2 (\mathbf{e}_z)_{i,j}\in \partial\Omega$ are both relocated points. Using the similar argument, one can show that the truncation error for $(\delta_{zz}\varphi)_{i,j}$ is again $O(\sqrt{h})$.

Then, similar to (\ref{eq:localconsistency_narrow}), one can show that the local truncation error of the finite difference scheme at $\mathbf{x}_{i,j}$, where the semi-Lagrangian wide stencil discretization is applied, is given by
\begin{equation}
\label{eq:localconsistency_wide}
\renewcommand*{\arraystretch}{1.2}
\begin{array}{l}
\left|
\;
\mathcal{F}
(\mathbf{x}_{i,j}, \varphi(\mathbf{x}_{i,j}), D^2 \varphi(\mathbf{x}_{i,j}))
- \mathcal{F}_h
(\mathbf{x}_{i,j}, \varphi_h)
\;
\right|
\\
\qquad
= \left\{
\renewcommand*{\arraystretch}{1}
\begin{array}{ll}
O(h),
&
\text{semi-Lagrangian wide stencil, with all the 4}
\\
&
\text{wide stencil points } \in\Omega,
\\
O(\sqrt{h}),
&
\text{semi-Lagrangian wide stencil, otherwise}.
\end{array}
\right.
\end{array}
\end{equation}

Finally, we note that the previous proof has assumed that the optimal control pair is solved exactly, or does not introduce additional truncation error. In Section \ref{sec:policy}, we have mentioned that using linear search for the optimal control pair under the semi-Lagrangian wide stencil discretization introduces truncation error. In particular, if we choose $M=O(N)$, then $O(h)$ truncation error is introduced \cite{wang2008maximal}. As a result, (\ref{eq:localconsistency_wide}) holds.
\hfill
\end{proof}

\subsection{Stability}
\label{subsec:stability}

Another condition for convergence is stability, which means that the discrete system has a bounded solution $u_h$. Stability condition is very closely related to the matrix $\mathbf{A}(a_h,\theta_h)$ in (\ref{eq:discretecomplete}) being an M-matrix \cite{saad2003iterative}, which will be proved in this section. For convenience, given vectors $u_h$ and $v_h$, we use $u_h \geq 0$ and $u_h \geq v_h$ to denote $(u_h)_i \geq 0$ and $(u_h)_i \geq (v_h)_i$ for all $i$. Similarly, given a matrix $\mathbf{A}$, we use $\mathbf{A}\geq 0$ to denote $\mathbf{A}_{ij}\geq 0$ for all $i,j$. In other words, the inequalities for vectors and matrices hold for all the elements.

\begin{lemma}[M-matrix]
\label{lm:WCDDLmatrix}
Suppose an $n\times n$ matrix $\mathbf{A}$ satisfies the following:
\begin{enumerate}
\item $\mathbf{A}$ is an L-matrix: $\mathbf{A}_{ii} > 0$ for all $i$, and $\mathbf{A}_{ij} \leq 0$ for all $i \neq j$;
\item $\mathbf{A}$ is weakly diagonally dominant:
\;
$
| \mathbf{A}_{ii} | \geq
\sum_{j\neq i} | \mathbf{A}_{ij} |
$; and
\item $\mathbf{A}$ has the following connectivity property: Let
$
\mathcal{G}(\mathbf{A})=\left\{ i \left|
| \mathbf{A}_{ii} | >
\sum_{j\neq i} | \mathbf{A}_{ij} |
\right.
\right\}
$
$
\neq \emptyset
$
be the set of rows where strict inequality is achieved.
For any $i\notin \mathcal{G}(\mathbf{A})$, there exists a sequence $i_1, i_2, \cdots, i_k$ with $\mathbf{A}_{i_r,i_{r+1}}\neq 0, \; 0\leq r \leq k-1$, such that $i_0=i$ and $i_k\in \mathcal{G}(\mathbf{A})$.
\end{enumerate}
Then $\mathbf{A}$ is an M-matrix. In particular,
\begin{enumerate}
\item $\mathbf{A}$ is non-singular; and
\item $\mathbf{A}^{-1}\geq 0$, namely, $(\mathbf{A}^{-1})_{ij}\geq 0$ for all $i,j$.
\end{enumerate}
\end{lemma}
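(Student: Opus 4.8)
The plan is to prove the two conclusions in order — first that $\mathbf{A}$ is non-singular, then that $\mathbf{A}^{-1}\ge 0$ — both by the same ``extremal index set'' device combined with the connectivity hypothesis (3).

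\emph{Non-singularity.} Suppose $\mathbf{A}x=0$ with $x\neq 0$, and let $S=\{\,i:|x_i|=\|x\|_\infty\,\}$, which is non-empty. For any $i\in S$, the $i$-th equation gives $|\mathbf{A}_{ii}|\,|x_i|\le\sum_{j\neq i}|\mathbf{A}_{ij}|\,|x_j|\le|x_i|\sum_{j\neq i}|\mathbf{A}_{ij}|\le|\mathbf{A}_{ii}|\,|x_i|$, using the L-matrix sign pattern, maximality of $|x_i|$, and weak diagonal dominance in turn. Equality throughout forces $|\mathbf{A}_{ii}|=\sum_{j\neq i}|\mathbf{A}_{ij}|$ (so $i\notin\mathcal{G}(\mathbf{A})$) and $|x_j|=|x_i|$ whenever $\mathbf{A}_{ij}\neq 0$ (so $j\in S$). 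Thus $S$ is closed under the relation ``$\mathbf{A}_{ij}\neq 0$'' and disjoint from $\mathcal{G}(\mathbf{A})$. But hypothesis (3) furnishes, for any $i\in S$, a chain $i=i_0,i_1,\dots,i_k$ with $\mathbf{A}_{i_r,i_{r+1}}\neq 0$ and $i_k\in\mathcal{G}(\mathbf{A})$; walking this chain stays inside $S$, forcing $i_k\in S\cap\mathcal{G}(\mathbf{A})=\emptyset$ --- a contradiction. Hence $x=0$ and $\mathbf{A}$ is invertible.

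\emph{Non-negative inverse.} It suffices to show that $\mathbf{A}x=b$ with $b\ge 0$ implies $x\ge 0$, since then taking $b$ to be each standard basis vector yields non-negativity of the columns of $\mathbf{A}^{-1}$. If not, $\min_j x_j<0$; set $T=\{\,i:x_i=\min_j x_j\,\}$. For $i\in T$, writing $\mathbf{A}_{ii}=|\mathbf{A}_{ii}|$ and $\mathbf{A}_{ij}=-|\mathbf{A}_{ij}|$ for $j\neq i$, we get $0\le b_i=|\mathbf{A}_{ii}|\,x_i-\sum_{j\neq i}|\mathbf{A}_{ij}|\,x_j\le x_i\Bigl(|\mathbf{A}_{ii}|-\sum_{j\neq i}|\mathbf{A}_{ij}|\Bigr)\le 0$, the middle step using $x_j\ge x_i$ and the last using $x_i<0$ together with weak diagonal dominance. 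Equality again forces $i\notin\mathcal{G}(\mathbf{A})$ and $x_j=x_i$ (hence $j\in T$) whenever $\mathbf{A}_{ij}\neq 0$, so $T$ is adjacency-closed and misses $\mathcal{G}(\mathbf{A})$, contradicting hypothesis (3) exactly as before. Therefore $x\ge 0$, and $\mathbf{A}^{-1}\ge 0$.

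The sign bookkeeping in these inequality chains is routine. The step I expect to be the real crux is the connectivity argument: one must check carefully that the extremal set ($S$ in the first part, $T$ in the second) is genuinely closed under the adjacency relation appearing in hypothesis (3), so that the guaranteed chain from an extremal index into $\mathcal{G}(\mathbf{A})$ cannot escape the extremal set, which yields the contradiction. A minor point worth making explicit is that $\mathcal{G}(\mathbf{A})\neq\emptyset$ is actually used --- it is what makes hypothesis (3) non-vacuous and forces the chains to terminate --- and that the chain indexing lines up with the hypothesis as stated.
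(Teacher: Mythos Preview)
Your proof is correct. The paper itself does not give an argument for this lemma --- it simply refers the reader to \cite{shivakumar1996two,azimzadeh2016weakly,saad2003iterative} --- so there is no ``paper's own proof'' to match. What you have supplied is the standard direct argument for weakly chained diagonally dominant L-matrices: an extremal-index set (maximal $|x_i|$ for the kernel, minimal $x_i$ for the monotonicity) that is shown to be adjacency-closed and disjoint from $\mathcal{G}(\mathbf{A})$, after which hypothesis~(3) forces a contradiction. The inequality chains and the induction along the chain $i_0,\dots,i_k$ are handled correctly, and you rightly flag that $\mathcal{G}(\mathbf{A})\neq\emptyset$ is what makes hypothesis~(3) bite. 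This is precisely the content of the cited references, so your write-up is a faithful (and self-contained) replacement for the paper's deferral.
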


\begin{proof}
We refer the readers to \cite{shivakumar1996two,azimzadeh2016weakly,saad2003iterative}.
\hfill
\end{proof}

\begin{lemma}
\label{lm:Mmatrix}
The matrix $\mathbf{A}(a_h,\theta_h)$, defined in (\ref{eq:discretecomplete}), is an M-matrix under the set of admissible controls $(a_h,\theta_h)\in\Gamma$.
\end{lemma}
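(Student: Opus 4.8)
The plan is to verify, for every admissible control pair $(a_h,\theta_h)\in\Gamma$, the three hypotheses of Lemma~\ref{lm:WCDDLmatrix}: that $\mathbf{A}(a_h,\theta_h)$ is an L-matrix, that it is weakly diagonally dominant, and that it has the stated connectivity property. I would organize the argument row by row, treating separately the rows produced by the standard 7-point stencil (\ref{eq:discretegridpoints_ex1}) and those produced by the semi-Lagrangian wide stencil (\ref{eq:discretegridpoints_ex3}), and then handle the boundary rows.

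First, for a row discretized by the standard 7-point stencil under Condition~(\ref{eq:narrow_cond1}), I read off from (\ref{eq:discretegridpoints_ex1}) that the diagonal entry is $\tfrac{2}{h^2}(\alpha_{11}+\alpha_{22}-\alpha_{12})$ while the off-diagonal entries are $-\tfrac{1}{h^2}(\alpha_{11}-\alpha_{12})$, $-\tfrac{1}{h^2}(\alpha_{22}-\alpha_{12})$ and $-\tfrac{1}{h^2}\alpha_{12}$ (each occurring twice). Condition~(\ref{eq:narrow_cond1}) gives $\alpha_{11}-\alpha_{12}\ge 0$, $\alpha_{22}-\alpha_{12}\ge 0$ and $\alpha_{12}\ge 0$, so all off-diagonals are $\le 0$; and since (\ref{eq:HJBcoeff}) yields $\alpha_{11}+\alpha_{22}=1$ while $|\alpha_{12}|=\tfrac12|1-2a||\sin 2\theta|\le\tfrac12$, the diagonal equals $\tfrac{2}{h^2}(1-\alpha_{12})>0$. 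Summing the magnitudes of the off-diagonal entries gives exactly $\tfrac{2}{h^2}(\alpha_{11}+\alpha_{22}-\alpha_{12})$, i.e. the diagonal, so the row is weakly diagonally dominant, with equality at interior points and strict inequality once one of its neighbors lies on $\partial\Omega$ and is absorbed into $F_h$. Condition~(\ref{eq:narrow_cond2}) with $disc=2$ is handled identically, replacing $(\delta_{xy}^{[1]}u)$ by $(\delta_{xy}^{[2]}u)$ and tracking the changed signs of the $\alpha_{12}$ terms.

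Next, for a wide-stencil row (\ref{eq:discretegridpoints_ex3}), I would use that each bilinear interpolation operator $\mathcal{I}_h$ has non-negative weights summing to one. Then the off-diagonal contributions $-\tfrac{a_{i,j}}{h}$ and $-\tfrac{1-a_{i,j}}{h}$ times these weights are $\le 0$ (using $a_{i,j}\in[0,1]$), the diagonal is $\tfrac{2}{h}$ plus whatever weight a stencil point happens to place on $\mathbf{x}_{i,j}$ itself (hence $>0$), and the sum of the off-diagonal magnitudes is at most $\tfrac{a_{i,j}}{h}+\tfrac{a_{i,j}}{h}+\tfrac{1-a_{i,j}}{h}+\tfrac{1-a_{i,j}}{h}=\tfrac{2}{h}$, again $\le$ diagonal. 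Near the boundary, (\ref{eq:derivative_wide1_bc1}) and its analogues send the Dirichlet value into $F_h$; one checks directly that the remaining row is still an L-matrix row and in fact strictly diagonally dominant, so every grid point adjacent to $\partial\Omega$ contributes a row lying in $\mathcal{G}(\mathbf{A})$, which is therefore non-empty. For the connectivity property I would argue that the non-zero off-diagonal entries of any interior row link $\mathbf{x}_{i,j}$ to genuinely neighboring grid points (the axis and diagonal neighbors for a 7-point row; the corners of the interpolation cells of the rotated stencil points for a wide-stencil row), so one can follow a chain of non-zero entries marching toward $\partial\Omega$ until a boundary-adjacent row in $\mathcal{G}(\mathbf{A})$ is reached; Lemma~\ref{lm:WCDDLmatrix} then applies.

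I expect the main obstacle to be this last connectivity step for the wide-stencil rows: one must ensure that, regardless of where the rotated points $\mathbf{x}_{i,j}\pm\sqrt{h}(\mathbf{e}_z)_{i,j}$ and $\mathbf{x}_{i,j}\pm\sqrt{h}(\mathbf{e}_w)_{i,j}$ land — including degenerate configurations where some bilinear weight vanishes — the row retains at least one non-zero off-diagonal entry pointing "outward", so that the chain to the boundary cannot stall. This needs a short geometric argument exploiting $\sqrt{h}>h$ together with the boundedness and convexity of $\Omega$; the remaining verifications (L-matrix and weak diagonal dominance) are elementary sign-and-sum bookkeeping.
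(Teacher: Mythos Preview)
Your proposal is correct and follows essentially the same strategy as the paper: verify the three hypotheses of Lemma~\ref{lm:WCDDLmatrix} row by row, splitting into the 7-point and wide-stencil cases and noting that boundary-adjacent rows are strictly diagonally dominant. The paper's connectivity argument is slightly cleaner than yours and neatly dissolves the ``stalling'' worry you flag: rather than a generic march toward $\partial\Omega$, it observes that every interior row has a nonzero off-diagonal entry at some $(i',j')$ with $i'\ge i$, $j'\ge j$ and at least one inequality strict, so a monotone-increasing index chain reaches a boundary-adjacent row in finitely many steps.
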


\begin{proof}
For the matrix $\mathbf{A}(a_h,\theta_h)$, the L-matrix condition and the weakly diagonal dominance condition can be easily verified by checking the four cases in Section \ref{subsec:discrete}. We remark that the strictly diagonally dominant rows correspond to the grid points near the boundary $\partial\Omega$, while the weakly diagonally dominant rows correspond to those inside the computation domain $\Omega$.

The connectivity property of $\mathbf{A}(a_h,\theta_h)$ is yet to be verified. For the grid points $\mathbf{x}_{i,j}$ that are near the boundary, the lexicographical index satisfies $N(i-1)+j\in\mathcal{G}(\mathbf{A})$. For those points that are inside the computational domain, or $N(i-1)+j\notin\mathcal{G}(\mathbf{A})$, there must exist non-zero entries $\mathbf{A}_{N(i-1)+j,N(i'-1)+j'}\neq 0$, where $i'\geq i$, $j'\geq j$, with at lease one strict inequality satisfied. Hence, given any $\mathbf{x}_{i_0,j_0}$, where $N(i_0-1)+j_0\notin\mathcal{G}(\mathbf{A})$, there exist monotonically increasing sequences $i_0 \leq i_1 \leq ... \leq i_k \leq N$ and $j_0 \leq j_1 \leq ... \leq j_k \leq N$, such that $N(i_k-1)+j_k\in\mathcal{G}(\mathbf{A})$.
\hfill
\end{proof}

Before investigating the stability for the nonlinear problem (\ref{eq:discretecomplete}), we first prove the stability for the corresponding linear problem.

\begin{lemma}
\label{lm:stability_inequality}
Define a circle $B_R(0): \{(x,y)|x^2+y^2 \leq R^2\}$, where the radius $R = \displaystyle\max_{(x,y)\in\overline{\Omega}} \sqrt{x^2+y^2}$, such that $B_R(0)$ covers the entire computational domain $\overline{\Omega}$. Let
$
\varphi(\mathbf{x}) \equiv -\frac{1}{2}\|\sqrt{f}\|_\infty (R^2-x^2-y^2)
$
be a lower-bound estimate function that is smooth and non-positive in $\overline{\Omega}$.
Denote its corresponding grid function as $\varphi_h\in \mathbb{R}^{N^2\times 1}$.
Then the vector $\mathbf{A}\varphi_h\in \mathbb{R}^{N^2\times 1}$ satisfies
\begin{equation}
\label{eq:stability_key}
\mathbf{A}\varphi_h \leq -\|\sqrt{f}\|_\infty,
\text{ for all } h.
\end{equation}
\end{lemma}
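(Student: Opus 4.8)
\emph{Proof idea.} The plan is to establish (\ref{eq:stability_key}) componentwise, i.e.\ to show $(\mathbf{A}\varphi_h)_{i,j}\le -\|\sqrt f\|_\infty$ at every grid point $\mathbf{x}_{i,j}$ and for every admissible control (recall that both the entries of $\mathbf{A}$ and the stencil chosen at $\mathbf{x}_{i,j}$ depend on the control). First I would record two elementary properties of $\varphi$. Since $\varphi(\mathbf{x}) = -\tfrac12\|\sqrt f\|_\infty R^2 + \tfrac12\|\sqrt f\|_\infty(x^2+y^2)$, its Hessian is $D^2\varphi = \|\sqrt f\|_\infty I$, so every second directional derivative of $\varphi$ equals $\|\sqrt f\|_\infty$; in particular $\varphi_{xx}=\varphi_{yy}=\varphi_{zz}=\varphi_{ww}=\|\sqrt f\|_\infty$ and $\varphi_{xy}=0$ for any rotated axes $z,w$. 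Hence, for every control $(a,\theta)$, the part of $\mathcal{L}_{a,\theta}\varphi$ that carries over to $\mathbf{A}\varphi_h$, namely $-\alpha_{11}\varphi_{xx}-2\alpha_{12}\varphi_{xy}-\alpha_{22}\varphi_{yy}$ (equivalently $-a\varphi_{zz}-(1-a)\varphi_{ww}$ after the rotation of (\ref{eq:disc_wide1})), equals $-(\alpha_{11}+\alpha_{22})\|\sqrt f\|_\infty = -\|\sqrt f\|_\infty$ by (\ref{eq:HJBcoeff}). Also, $R\ge\sqrt{x^2+y^2}$ on $\overline\Omega$ forces $\varphi\le 0$ on $\overline\Omega$.

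Next I would reduce the boundary bookkeeping. At a grid point $\mathbf{x}_{i,j}$, ``complete'' its stencil by re-including every neighbour (or bilinear-interpolation node) that lies on $\partial\Omega$, evaluating $\varphi$ there instead of the Dirichlet data $g$, and call the resulting number $\widetilde{\mathcal{L}}_h[\varphi]_{i,j}$. By construction $(\mathbf{A}\varphi_h)_{i,j} = \widetilde{\mathcal{L}}_h[\varphi]_{i,j} - \sum_{k\in\partial\Omega} c_{k}\varphi_k$, where the $c_k$ are the stencil coefficients of the boundary nodes; these satisfy $c_k\le 0$ (the L-matrix sign pattern from Lemma \ref{lm:Mmatrix}), and $\varphi_k\le 0$ since $k\in\partial\Omega\subset\overline\Omega$, so $\sum_{k\in\partial\Omega}c_k\varphi_k\ge 0$. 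It therefore suffices to prove $\widetilde{\mathcal{L}}_h[\varphi]_{i,j}\le -\|\sqrt f\|_\infty$.

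Then I would verify this last inequality stencil by stencil. If the standard 7-point stencil is in force, $\delta_{xx},\delta_{yy},\delta_{xy}^{[1]},\delta_{xy}^{[2]}$ all reproduce the second derivatives of quadratics exactly, so $\widetilde{\mathcal{L}}_h[\varphi]_{i,j} = -\alpha_{11}\varphi_{xx}-2\alpha_{12}\cdot 0-\alpha_{22}\varphi_{yy} = -\|\sqrt f\|_\infty$. If the semi-Lagrangian wide stencil is in force, the key point is that $\varphi$ is a \emph{separable} quadratic with \emph{nonnegative} leading coefficient, so by linearity of bilinear interpolation and convexity of $t\mapsto t^2$ one has $\mathcal{I}_h\varphi|_{\mathbf{p}}\ge\varphi(\mathbf{p})$ at every point $\mathbf{p}$. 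Inserting this into (\ref{eq:derivative_wide1})--(\ref{eq:derivative_wide2}) and using that the (possibly non-uniform, and one-sided near $\partial\Omega$) central differences of the quadratic $\varphi$ along $\mathbf{e}_z,\mathbf{e}_w$ are exact and return $\varphi_{zz}=\varphi_{ww}=\|\sqrt f\|_\infty$ yields $(\delta_{zz}\varphi)_{i,j}\ge\|\sqrt f\|_\infty$ and $(\delta_{ww}\varphi)_{i,j}\ge\|\sqrt f\|_\infty$; since $a_{i,j}\in[0,1]$ this gives $\widetilde{\mathcal{L}}_h[\varphi]_{i,j} = -a_{i,j}(\delta_{zz}\varphi)_{i,j}-(1-a_{i,j})(\delta_{ww}\varphi)_{i,j}\le -\|\sqrt f\|_\infty$. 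The near-boundary formula (\ref{eq:derivative_wide1_bc1}) is handled identically, its shortened leg being an exact one-sided difference of a quadratic. Combining with the reduction of the previous paragraph gives (\ref{eq:stability_key}).

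The step I expect to be the main obstacle is the wide-stencil case: unlike the 7-point stencil it is \emph{not} exact on quadratics because of the bilinear interpolation, so the argument cannot end in an equality and must instead extract the correct one-sided inequality $(\delta_{zz}\varphi)_{i,j},(\delta_{ww}\varphi)_{i,j}\ge\|\sqrt f\|_\infty$ --- which is exactly where the separability, the convexity, and the sign of the leading coefficient of $\varphi$ enter. The remaining ingredient, namely that every coefficient relegated to $F_h$ multiplies a nonpositive value of $\varphi$, is routine given $\varphi\le 0$ on $\overline\Omega$ and the L-matrix property already in hand.
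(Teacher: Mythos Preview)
Your proposal is correct and follows essentially the same approach as the paper: use $\varphi\le 0$ on $\overline\Omega$ to bound the boundary contributions, use $\mathcal{I}_h\varphi\ge\varphi$ (convexity of $\varphi$) to replace interpolated values by exact values, and then invoke exactness of the (possibly non-uniform) second differences on quadratics to get $-\|\sqrt f\|_\infty$. Your write-up is in fact more systematic than the paper's, which only works out one representative wide-stencil-near-boundary case and leaves the remaining cases to the reader.
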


\begin{proof}
Without loss of generality, let us consider a grid point $\mathbf{x}_{i,j}$  where semi-Lagrangian wide stencil discretization is applied and boundary terms occur with $\mathbf{x}_{i,j}+\sqrt{h}(\mathbf{e}_z)_{i,j}$ relocated to $\mathbf{x}_{i,j}+\eta_1(\mathbf{e}_z)_{i,j}$. Then
\begin{align*}
&
(\mathbf{A}\varphi_h)_{i,j}
= \;
2\left(
\frac{a_{i,j}}{\eta_1 \sqrt{h}} + \frac{1-a_{i,j}}{h}
\right)
\varphi(\mathbf{x}_{i,j})
-\frac{a_{i,j}}{\sqrt{h} \frac{\eta_1+\sqrt{h}}{2}}
\left.
\mathcal{I}_h \varphi
\right|_{\mathbf{x}_{i,j}-\sqrt{h}(\mathbf{e}_z)_{i,j}}
\\
&
\qquad
-\frac{1-a_{i,j}}{h}
\left.
\mathcal{I}_h \varphi
\right|_{\mathbf{x}_{i,j}+\sqrt{h}(\mathbf{e}_w)_{i,j}}
-\frac{1-a_{i,j}}{h}
\left.
\mathcal{I}_h \varphi
\right|_{\mathbf{x}_{i,j}-\sqrt{h}(\mathbf{e}_w)_{i,j}}
\\
&
\leq \;
2\left(
\frac{a_{i,j}}{\eta_1 \sqrt{h}} + \frac{1-a_{i,j}}{h}
\right)
\varphi(\mathbf{x}_{i,j})
-\frac{a_{i,j}}{\eta_1 \frac{\eta_1+\sqrt{h}}{2}}
\varphi(\mathbf{x}_{i,j}+\eta_1 (\mathbf{e}_z)_{i,j})
\\
&
\qquad
-\frac{a_{i,j}}{\sqrt{h} \frac{\eta_1+\sqrt{h}}{2}}
\varphi(\mathbf{x}_{i,j}-\sqrt{h}(\mathbf{e}_z)_{i,j})
-\frac{1-a_{i,j}}{h}
\varphi(\mathbf{x}_{i,j}+\sqrt{h}(\mathbf{e}_w)_{i,j})
\\
&
\qquad
-\frac{1-a_{i,j}}{h}
\varphi(\mathbf{x}_{i,j}-\sqrt{h}(\mathbf{e}_w)_{i,j})
\\
&
= \; -\|\sqrt{f}\|_\infty,
\end{align*}
where we have used $\varphi(\mathbf{x}_{i,j}+\eta_1 (\mathbf{e}_z)_{i,j}) \leq 0$, and
$
\mathcal{I}_h \varphi|_{\mathbf{x}_{i,j}-\sqrt{h}(\mathbf{e}_z)_{i,j}}
\geq \varphi(\mathbf{x}_{i,j}-\sqrt{h}(\mathbf{e}_z)_{i,j})
$
and similarly for the other stencil points. Interested readers can prove the other cases in the same fashion.
\hfill
\end{proof}

\begin{lemma}[Stability for linear problem]
Assume that a control pair $(a,\theta)$ is given, such that the HJB equation (\ref{eq:HJB1}) becomes linear:
\begin{equation*}
\renewcommand*{\arraystretch}{1}
\begin{array}{rl}
- \alpha_{11}(a,\theta) u_{xx}
- 2 \alpha_{12}(a,\theta) u_{xy}
- \alpha_{22}(a,\theta) u_{yy}
=
-2\sqrt{a(1-a)f},
&
\text{ in } \Omega,
\\
u = g,
&
\text{ on } \partial\Omega.
\end{array}
\end{equation*}
Suppose the mixed discretization gives the linear system
$
\mathbf{A}(a_h,\theta_h) \, u_h
= F_h(a_h,\theta_h)
$,
which is the linear version of (\ref{eq:discretecomplete}). Then the solution $u_h$ is bounded as follows:

\begin{enumerate}
\item If $g=0$ (homogeneous boundary condition) and $f\geq 0$ is a bounded function,
\begin{equation}
\label{eq:linearHJB1_discrete_bound}
-\frac{1}{2}\|\sqrt{f}\|_\infty R^2 \leq u_h \leq 0,
\text{ independent of $h$}.
\end{equation}
\item If $f=0$ (homogeneous PDE) and $g$ is a bounded function,
\begin{equation}
\label{eq:linearHJB2_discrete_bound}
\| u_h \|_\infty \leq \|g\|_\infty,
\text{ independent of $h$}.
\end{equation}
\item In general, if $f\geq 0$ and $g$ are bounded functions,
\begin{equation}
\label{eq:linearHJB_discrete_bound}
\| u_h \|_\infty \leq \frac{1}{2}\|\sqrt{f}\|_\infty R^2 + \|g\|_\infty,
\text{ independent of $h$}.
\end{equation}
\end{enumerate}
\end{lemma}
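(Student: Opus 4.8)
The plan is to prove the three bounds by combining the M-matrix property of $\mathbf{A}(a_h,\theta_h)$ (Lemma \ref{lm:Mmatrix}) with the discrete comparison principle it entails, using the explicit barrier function $\varphi$ from Lemma \ref{lm:stability_inequality}. The key observation is that $\mathbf{A}^{-1}\geq 0$, so $\mathbf{A}v_h \leq \mathbf{A}w_h$ implies $v_h \leq w_h$; this is the discrete maximum principle we will apply repeatedly. Throughout, I would treat the boundary rows carefully: for grid points near $\partial\Omega$, the Dirichlet data has been folded into $F_h$, so the identity $\mathbf{A}u_h = F_h$ already encodes $u = g$ on the boundary in the appropriate discrete sense.

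First I would handle case (1), the homogeneous boundary condition $g=0$ with $f\geq 0$ bounded. The upper bound $u_h \leq 0$: since $g = 0$, the vector $0$ satisfies $\mathbf{A}\cdot 0 = 0 \geq F_h(a_h,\theta_h) = -2\sqrt{a_h(1-a_h)f_h}$ componentwise (because $f\geq 0$), hence $\mathbf{A}u_h = F_h \leq \mathbf{A}\cdot 0$, and $\mathbf{A}^{-1}\geq 0$ gives $u_h \leq 0$. For the lower bound, I would invoke Lemma \ref{lm:stability_inequality}: the grid function $\varphi_h$ of $\varphi(\mathbf{x}) = -\tfrac12\|\sqrt{f}\|_\infty(R^2 - x^2 - y^2)$ satisfies $\mathbf{A}\varphi_h \leq -\|\sqrt{f}\|_\infty$. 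Since $\sqrt{a(1-a)} \leq \tfrac12$ for $a\in[0,1]$, we have $F_h = -2\sqrt{a_h(1-a_h)f_h} \geq -\|\sqrt{f}\|_\infty \geq \mathbf{A}\varphi_h$, i.e. $\mathbf{A}u_h = F_h \geq \mathbf{A}\varphi_h$, so $u_h \geq \varphi_h \geq -\tfrac12\|\sqrt{f}\|_\infty R^2$ since $R^2 - x^2 - y^2 \geq 0$ on $\overline{\Omega}\subset B_R(0)$. This yields (\ref{eq:linearHJB1_discrete_bound}).

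Next, for case (2), the homogeneous PDE $f = 0$ with $g$ bounded: here $F_h$ consists purely of boundary contributions. I would use the constant barriers $\pm\|g\|_\infty$. The grid function of the constant $\|g\|_\infty$ has $\mathbf{A}(\|g\|_\infty \mathbf{1})$ equal to $0$ at interior rows (since $\mathbf{A}$ has zero row sums there — weak diagonal dominance with equality — so it annihilates constants) and equal to $\|g\|_\infty$ times the boundary-row coefficients at near-boundary rows, which dominates the corresponding entries of $F_h = $ (boundary-coefficient)$\cdot g \leq$ (boundary-coefficient)$\cdot\|g\|_\infty$. Hence $\mathbf{A}(\|g\|_\infty\mathbf{1}) \geq F_h = \mathbf{A}u_h$, giving $u_h \leq \|g\|_\infty\mathbf{1}$; the symmetric argument with $-\|g\|_\infty$ gives the lower bound, establishing (\ref{eq:linearHJB2_discrete_bound}). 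Finally, case (3) follows by linearity: split the problem into the $f$-part with zero boundary data and the $g$-part with $f=0$, solve each subproblem, add the solutions (the linear operator $\mathbf{A}$ is fixed once $(a_h,\theta_h)$ is given), and apply the triangle inequality to the bounds from (1) and (2) — though to be careful I would instead directly combine the barriers $\varphi_h \pm \|g\|_\infty\mathbf{1}$ as sub/super-solutions, which bypasses any subtlety about the decomposition and yields (\ref{eq:linearHJB_discrete_bound}) in one step.

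The main obstacle I anticipate is the careful bookkeeping of the boundary rows: verifying that $\mathbf{A}$ applied to a constant (or to $\varphi_h$) produces exactly the right inequality against $F_h$ requires tracking how the Dirichlet terms were moved into $F_h$ in each of the four discretization cases of Section \ref{subsec:discrete}, including the truncated wide-stencil case (\ref{eq:derivative_wide1_bc1}) where the stencil length $\eta_1$ appears. The interior rows are routine (zero row sums, so constants are annihilated and $\varphi_h$ is controlled by Lemma \ref{lm:stability_inequality}), but ensuring the near-boundary rows give the correct sign — and that the bound is genuinely independent of $h$ despite the $1/\eta_1$ and $1/\sqrt{h}$ factors — is where the argument must be done with care. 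Lemma \ref{lm:stability_inequality} has already absorbed most of this difficulty for the $f$-part, so the remaining work is to replicate that kind of estimate for the constant barrier against the boundary data.
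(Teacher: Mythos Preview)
Your proposal is correct and follows essentially the same approach as the paper's proof: case (1) uses $\mathbf{A}^{-1}\geq 0$ together with Lemma \ref{lm:stability_inequality} and the barrier $\varphi_h$ exactly as the paper does; case (3) is handled by superposition in both. The only difference is in case (2): the paper simply invokes the discrete comparison principle for M-matrix discretizations by citing \cite{ciarlet1970discrete}, whereas you spell out the underlying argument with the constant barriers $\pm\|g\|_\infty\mathbf{1}$ --- this is precisely the content of that cited result, so the two routes coincide.
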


\begin{proof}
1. The proof follows the idea in \cite{samarskii2001theory}. In this case, the $N^2$-vector $F_h$ is simply given by
$F_{i,j} = -2\sqrt{a_{i,j}(1-a_{i,j})f_{i,j}}$.
Since $a_{i,j}\in [0,1]$, we have
$-\|\sqrt{f}\|_\infty \leq F_h \leq 0$.

Lemma \ref{lm:Mmatrix} has proved that $\mathbf{A}$ is an M-matrix, and thus $\mathbf{A}^{-1}\geq 0$. Also, we note that $F_h\leq 0$. Hence, the upper bound of $u_h$ is given by $u_h=\mathbf{A}^{-1}F_h\leq 0$.

Lemma \ref{lm:stability_inequality} has proved that
$\mathbf{A}\varphi_h \leq -\|\sqrt{f}\|_\infty$.
Since $-\|\sqrt{f}\|_\infty \leq F_h = \mathbf{A} u_h$, we have
$\mathbf{A}\varphi_h \leq \mathbf{A} u_h$.
Since $\mathbf{A}^{-1}\geq 0$, we have $\varphi_h \leq u_h$.
Hence, the lower bound of $u_h$ is given by
$
u_h \geq \varphi_h \geq -\|\varphi\|_\infty = -\frac{1}{2}\|\sqrt{f}\|_\infty R^2
$.

2. By Lemma \ref{lm:Mmatrix}, $\mathbf{A}$ is an M-matrix. Then following the proof in \cite{ciarlet1970discrete}, the solution $u_h$ under the M-matrix discretization satisfies the discrete comparison principle, and furthermore, (\ref{eq:linearHJB2_discrete_bound}).

3. This can be obtained by applying the superposition principle of the linear PDEs on 1 and 2.
\hfill
\end{proof}

Eventually, we come back to our original nonlinear problem (\ref{eq:discretecomplete}).

\begin{lemma}[Stability for nonlinear problem]
\label{lm:stability}
Assume that $f$ and $g$ are bounded in $L_\infty$ norm. Given that Lemma \ref{lm:Mmatrix} is satisfied, the solution of the discrete system (\ref{eq:discretecomplete}), $u_h$, is bounded by
\begin{equation}
\label{eq:HJB_discrete_bound}
\|u_h\|_\infty \leq \frac{1}{2}\|\sqrt{f}\|_\infty R^2 + \|g\|_\infty,
\end{equation}
where the bound is independent of the mesh size $h$ and the controls $(a_h,\theta_h)$.
\end{lemma}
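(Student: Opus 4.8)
The plan is to reduce the nonlinear estimate to the linear one just established, by freezing the control at its optimizer. Let $u_h$ be the solution of (\ref{eq:discretecomplete}); its existence is guaranteed by the convergence of policy iteration, which applies because $\mathbf{A}(a_h,\theta_h)$ is an M-matrix for every admissible control (Lemma \ref{lm:Mmatrix}). Since the maximization in (\ref{eq:discretecomplete}) is performed componentwise, and row $(i,j)$ of $\mathbf{A}(a_h,\theta_h)$ and of $F_h(a_h,\theta_h)$ depends only on the local control $(a_{i,j},\theta_{i,j})$, at each grid point the continuous map $(a_{i,j},\theta_{i,j})\mapsto\mathcal{L}_{i,j}(a_{i,j},\theta_{i,j};u_h)$ attains its maximum over the control set at some pair $(a_{i,j}^*,\theta_{i,j}^*)$; the half-open angular interval is immaterial, because $\mathcal{L}_{a,\theta}\,u=\mathcal{L}_{1-a,\theta+\frac{\pi}{2}}\,u$ lets one move any optimizer with $\theta=\pm\frac{\pi}{4}$ to an interior point of $\Gamma_{i,j}^1$ or $\Gamma_{i,j}^2$ without changing the value, so one may as well maximize over the compact closure of $\Gamma_{i,j}$. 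Assembling these pointwise optimizers into vectors $(a_h^*,\theta_h^*)\in\Gamma$, and using that the maximum in (\ref{eq:discretecomplete}) equals $0$ and is attained, yields the exact linear identity
\begin{equation*}
\mathbf{A}(a_h^*,\theta_h^*)\,u_h = F_h(a_h^*,\theta_h^*).
\end{equation*}

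It then suffices to quote the stability estimate for the linear problem (its general case, $f\ge 0$ and $g$ bounded) for this frozen control pair. Its hypotheses are met: $f$ and $g$ are bounded by assumption, and Lemma \ref{lm:Mmatrix} makes $\mathbf{A}(a_h^*,\theta_h^*)$ an M-matrix, hence invertible with $\mathbf{A}(a_h^*,\theta_h^*)^{-1}\ge 0$. Crucially, the bound $\|u_h\|_\infty\le\frac{1}{2}\|\sqrt{f}\|_\infty R^2+\|g\|_\infty$ produced by the linear estimate rests only on $a_{i,j}\in[0,1]$ (so that $0\le\sqrt{a_{i,j}(1-a_{i,j})}\le\frac12$ and thus $-\|\sqrt f\|_\infty\le F_h\le 0$ in the source part), on the M-matrix property, and on the barrier inequality of Lemma \ref{lm:stability_inequality}, none of which involve the particular control or the mesh size. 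Therefore the same bound holds for $u_h$, uniformly in $h$ and in $(a_h,\theta_h)$, which is exactly (\ref{eq:HJB_discrete_bound}).

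I do not expect a genuine obstacle: once the linear estimate is in hand, this is essentially a one-line reduction. The two items that each need a careful sentence are that a maximizing control actually exists (so that the frozen relation is the exact equality $\mathbf{A}u_h=F_h$ and not merely a limiting inequality), and that every ingredient of the linear bound is uniform in the control — both of which were, in effect, set up for precisely this purpose in Lemmas \ref{lm:Mmatrix} and \ref{lm:stability_inequality} and in the control-set reduction of Section \ref{sec:MAEtoHJB}. If one prefers to avoid the attainment remark, one can instead argue directly by the discrete comparison principle exactly as in the proof of the linear case, using the parabolic barrier $\varphi$ of Lemma \ref{lm:stability_inequality} together with a constant barrier of size $\|g\|_\infty$.
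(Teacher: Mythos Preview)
Your proposal is correct and follows the same approach as the paper: reduce to the linear estimate by observing that the nonlinear solution satisfies a linear system at the optimal control, then invoke the uniform-in-control bound from the linear case. The paper's own proof is in fact a single sentence to this effect; your version is more careful in justifying attainment of the maximum and uniformity of the linear bound, but the underlying idea is identical.
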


\begin{proof}
Since the solution for the linear PDE under the mixed discretization is bounded by
(\ref{eq:linearHJB_discrete_bound}) under all admissible controls $(a_h,\theta_h)\in\Gamma$, and the bound is independent of the controls $(a_h,\theta_h)$ and the mesh size $h$, we conclude that the same bound applies to the solution for the nonlinear PDE under the mixed discretization.
\hfill
\end{proof}

\subsection{Monotonicity}
\label{subsec:monotonicity}

For nonlinear PDEs, monotonicity is another sufficient condition for convergence in the viscosity sense. Monotonicity means that the discretization scheme at a grid point $\mathbf{x}_{i,j}$ must be a non-decreasing function of the unknown $u_{i,j}$ and a non-increasing function of the unknowns at the other points $\{u_{p,q}|_{(p,q)\neq (i,j)}\}$. Monotonicity of our numerical scheme (\ref{eq:discretecomplete1})-(\ref{eq:discretecomplete2}) is inherited from the M-matrix property in Lemma \ref{lm:WCDDLmatrix}.

\begin{lemma}[Monotonicity]
The finite difference discretization
$
\mathcal{F}_h
( \mathbf{x}_{i,j}, u_h )
= \mathcal{F}_h
( \mathbf{x}_{i,j}, u_{i,j}, \{u_{p,q}|_{(p,q)\neq (i,j)}\} ) = 0,
$
given in (\ref{eq:discretecomplete1})-(\ref{eq:discretecomplete2}), is monotone.
More specifically, for all $u_h \leq v_h$, we have
\begin{equation}
\label{eq:monotonicity}
\renewcommand*{\arraystretch}{1.2}
\begin{array}{l}
\mathcal{F}_h
( \mathbf{x}_{i,j}, u_{i,j}, \{u_{p,q}|_{(p,q)\neq (i,j)}\} )
\leq
\mathcal{F}_h
( \mathbf{x}_{i,j}, v_{i,j}, \{u_{p,q}|_{(p,q)\neq (i,j)}\} ),
\\
\mathcal{F}_h
( \mathbf{x}_{i,j}, u_{i,j}, \{u_{p,q}|_{(p,q)\neq (i,j)}\} )
\geq
\mathcal{F}_h
( \mathbf{x}_{i,j}, u_{i,j}, \{v_{p,q}|_{(p,q)\neq (i,j)}\} ).
\end{array}
\end{equation}
\end{lemma}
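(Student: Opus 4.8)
The plan is to reduce the monotonicity of the nonlinear scheme to two ingredients that are already available: the L-matrix sign structure of $\mathbf{A}(a_h,\theta_h)$ (verified inside the proof of Lemma \ref{lm:Mmatrix}, which is exactly condition 1 of Lemma \ref{lm:WCDDLmatrix}), and the elementary fact that a pointwise maximum of monotone maps is monotone. First I would note that the scheme (\ref{eq:discretecomplete1})-(\ref{eq:discretecomplete2}) acts componentwise, since at a grid point $\mathbf{x}_{i,j}$ the control $(a_{i,j},\theta_{i,j})$ is local; hence
\[
\bigl(\mathcal{F}_h(u_h)\bigr)_{i,j}
= \max_{(a_{i,j},\theta_{i,j})\in\Gamma_{i,j}}
\mathcal{L}_{i,j}(a_{i,j},\theta_{i,j};u_h),
\]
and for each fixed control the pointwise component $u_h\mapsto\mathcal{L}_{i,j}(a_{i,j},\theta_{i,j};u_h)=(\mathbf{A}(a_h,\theta_h)u_h)_{i,j}-F_{i,j}(a_h,\theta_h)$ is affine in $u_h$, with the constant term $F_{i,j}$ not depending on $u_h$. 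I would also emphasize that which of the four discretizations of Section \ref{subsec:discrete} is used at $\mathbf{x}_{i,j}$ is dictated solely by which region of Figure \ref{fig:control} the control $(a_{i,j},\theta_{i,j})$ lies in, and not by $u_h$; thus $\mathbf{A}(a_h,\theta_h)$ is a genuine function of the controls alone, and the scheme really is a maximum, over a \emph{fixed} control set, of affine functions of $u_h$.

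Next I would read off the coefficient signs. Because $\mathbf{A}(a_h,\theta_h)$ is an L-matrix, the coefficient of $u_{i,j}$ in $\mathcal{L}_{i,j}$ is the diagonal entry $\mathbf{A}_{(i,j),(i,j)}>0$, while the coefficient of each $u_{p,q}$ with $(p,q)\neq(i,j)$ is an off-diagonal entry $\mathbf{A}_{(i,j),(p,q)}\le 0$; for the semi-Lagrangian rows this uses that the bilinear-interpolation weights lie in $[0,1]$, so that the non-positive factors $-a_{i,j}/h$ and $-(1-a_{i,j})/h$ in (\ref{eq:discretegridpoints_ex3}) produce non-positive off-diagonal entries, exactly the verification already carried out in Lemma \ref{lm:Mmatrix}. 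Consequently, for every admissible control, $\mathcal{L}_{i,j}(a_{i,j},\theta_{i,j};\cdot)$ is non-decreasing in $u_{i,j}$ and non-increasing in each $u_{p,q}$, $(p,q)\neq(i,j)$.

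Finally I would pass to the maximum over controls. Assume $u_h\le v_h$. For the first inequality of (\ref{eq:monotonicity}) only the $(i,j)$-entry is raised, from $u_{i,j}$ to $v_{i,j}\ge u_{i,j}$, so for every $(a_{i,j},\theta_{i,j})\in\Gamma_{i,j}$ one has $\mathcal{L}_{i,j}(a_{i,j},\theta_{i,j};u_{i,j},\{u_{p,q}\})\le\mathcal{L}_{i,j}(a_{i,j},\theta_{i,j};v_{i,j},\{u_{p,q}\})$; taking $\max$ over the common set $\Gamma_{i,j}$ on both sides and using $\max_x f(x)\le\max_x g(x)$ whenever $f\le g$ pointwise yields the claim. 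The second inequality of (\ref{eq:monotonicity}) is the same argument with the roles of the diagonal and off-diagonal entries swapped and the inequality reversed, since each $u_{p,q}$ is raised to $v_{p,q}$ and $\mathcal{L}_{i,j}$ is non-increasing in those variables. I do not expect a genuine obstacle here: the only delicate point is confirming that the wide-stencil bilinear interpolation preserves the non-positive off-diagonal pattern, and that has already been absorbed into Lemma \ref{lm:Mmatrix}; everything else is the standard ``maximum of monotone affine maps'' argument, together with the bookkeeping that $\Gamma_{i,j}$ and its region decomposition do not depend on $u_h$.
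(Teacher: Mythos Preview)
Your proposal is correct and follows essentially the same approach as the paper: both reduce monotonicity of $\mathcal{F}_h$ to the L-matrix sign structure of $\mathbf{A}(a_h,\theta_h)$ established in Lemma~\ref{lm:Mmatrix}, and then pass through the pointwise maximum over controls. The only cosmetic difference is that the paper bounds the difference of maxima via $\max_x f(x)-\max_x g(x)\ge\min_x[f(x)-g(x)]$ and then invokes the off-diagonal sign, whereas you use the equivalent and slightly more direct observation that $f\le g$ pointwise implies $\max_x f\le\max_x g$; your explicit remark that the choice of stencil depends only on the control and not on $u_h$ is a useful clarification that the paper leaves implicit.
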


\begin{proof}
The proof follows \cite{forsyth2007numerical}. Our goal is to verify the monotonicity condition (\ref{eq:monotonicity}). Without loss of generality, let us analyze one example: $u_h \leq v_h$ with $u_{i,j}= v_{i,j}$. Then
\begin{align*}
&
\mathcal{F}_h
( \mathbf{x}_{i,j}, u_{i,j}, \{u_{p,q}|_{(p,q)\neq (i,j)}\} )
-
\mathcal{F}_h
( \mathbf{x}_{i,j}, u_{i,j}, \{v_{p,q}|_{(p,q)\neq (i,j)}\} )
\\
=
&
\displaystyle\max_{
(a_{i,j},\theta_{i,j})
\in\Gamma
}
\left\{
(\mathbf{A} (a_{i,j},\theta_{i,j}) \, u_h)_{i,j}
- F_{i,j}(a_{i,j},\theta_{i,j})
\right\}
\\
&
\hspace{3cm}
- \displaystyle\max_{
(a_{i,j},\theta_{i,j})
\in\Gamma
}
\left\{
(\mathbf{A} (a_{i,j},\theta_{i,j}) \, v_h)_{i,j}
- F_{i,j}(a_{i,j},\theta_{i,j})
\right\}
\\
\geq
&
\displaystyle\min_{
(a_{i,j},\theta_{i,j})
\in\Gamma
}
[ (\mathbf{A} (a_{i,j},\theta_{i,j}) (u_h - v_h) ]_{i,j}
\;
\geq
\;
0,
\end{align*}
where the first inequality uses
$
\displaystyle\max_{x} f(x) - \displaystyle\max_{x} g(x)
\geq
\displaystyle\min_{x} \left[ f(x) - g(x) \right]
$,
and the last inequality considers that $u_h - v_h \leq 0$ and that all the off-diagonal entries of $\mathbf{A}$ are non-positive under all admissible controls.
\hfill
\end{proof}

\subsection{Strong comparison principle}
\label{subsec:comparison}

There is one more sufficient condition for convergence, called strong comparison principle \cite{barles1991convergence}. Strong comparison principle holds if the boundary condition is satisfied in the viscosity sense. Unfortunately, there is no proof in the literature that this necessarily holds for the Dirichlet problem (\ref{eq:MAE2}). Hence, we provide a proof in the setting of our proposed numerical scheme.

\begin{lemma}
\label{lm:comparison_max}
Let
$
\zeta(\mathbf{x};\mathbf{p})
\equiv \frac{1}{2} \|\sqrt{f}\|_\infty \|\mathbf{x}-\mathbf{p}\|_2^2
$,
where $\mathbf{p}\in \mathbb{R}^2$ is a random vector.
Let
$\hat{u}(\mathbf{x}): \{\mathbf{x}_{i,j}\in\Omega\} \cup \partial\Omega \to \mathbb{R}$,
where
$
\hat{u}(\mathbf{x}) \equiv \left\{
\begin{array}{ll}
u_h(\mathbf{x}_{i,j}),
& \text{ if } \mathbf{x}\in\{\mathbf{x}_{i,j}\in\Omega\},
\\
g(\mathbf{x}),
& \text{ if } \mathbf{x}\in\partial\Omega.
\end{array}
\right.
$
Then $\mathcal{I}_h\zeta \pm \hat{u}$ achieves its maximum on $\partial \Omega$.
\end{lemma}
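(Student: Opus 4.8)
The plan is to use $\zeta(\cdot;\mathbf{p})$ as a discrete barrier. Since $D^2\zeta=\|\sqrt{f}\|_\infty I$, the quadratic $\zeta$ plays the role of a subsolution of the HJB operator at the discrete level; combining the resulting inequality with the discrete equation satisfied by $u_h$ reduces the claim to a discrete maximum principle, which then follows from the M-matrix structure of Lemma~\ref{lm:Mmatrix}. Throughout, write $\widetilde{\mathcal{L}}_{a,\theta}[v]_{i,j}$ for the part of the mixed discretization that is linear in the grid/boundary values of $v$ (so that $\mathcal{L}_{a,\theta}[v]_{i,j}=\widetilde{\mathcal{L}}_{a,\theta}[v]_{i,j}+2\sqrt{a(1-a)f_{i,j}}$, and $(\mathbf{A}(a_h,\theta_h)v_h)_{i,j}$ is $\widetilde{\mathcal{L}}_{a,\theta}[v]_{i,j}$ with the boundary contributions moved to the right-hand side as in Section~\ref{subsec:discrete}).

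Step 1: the barrier inequality. I would show that $\widetilde{\mathcal{L}}_{a,\theta}[\zeta]_{i,j}\le-\|\sqrt{f}\|_\infty$ for every admissible control $(a,\theta)\in\Gamma$ and every interior grid point. For the standard $7$-point stencil this is an identity: the central second differences of a quadratic are exact, $(\delta_{xx}\zeta)_{i,j}=(\delta_{yy}\zeta)_{i,j}=\|\sqrt{f}\|_\infty$ and $(\delta_{xy}^{[1]}\zeta)_{i,j}=(\delta_{xy}^{[2]}\zeta)_{i,j}=0$, so using $\alpha_{11}+\alpha_{22}=1$ from~(\ref{eq:HJBcoeff}) one gets $\widetilde{\mathcal{L}}_{a,\theta}[\zeta]_{i,j}=-\|\sqrt{f}\|_\infty$. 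For the semi-Lagrangian stencil I would use that the bilinear interpolant of a convex function overestimates it, $\mathcal{I}_h\zeta\ge\zeta$, together with the exact second-difference identity along an arbitrary unit direction for a quadratic with Hessian $\|\sqrt{f}\|_\infty I$, to obtain $(\delta_{zz}\zeta)_{i,j}\ge\|\sqrt{f}\|_\infty$ and $(\delta_{ww}\zeta)_{i,j}\ge\|\sqrt{f}\|_\infty$, whence $\widetilde{\mathcal{L}}_{a,\theta}[\zeta]_{i,j}=-a(\delta_{zz}\zeta)_{i,j}-(1-a)(\delta_{ww}\zeta)_{i,j}\le-\|\sqrt{f}\|_\infty$; this essentially repeats the computation already performed in the proof of Lemma~\ref{lm:stability_inequality}, and it applies verbatim to the truncated near-boundary stencil~(\ref{eq:derivative_wide1_bc1}).

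Step 2: freeze controls, compare, conclude. Let $(a^\ast_{i,j},\theta^\ast_{i,j})$ be the maximizers in~(\ref{eq:discretecomplete1})--(\ref{eq:discretecomplete2}) at each interior grid point and set $\mathbf{A}^\ast=\mathbf{A}(a^\ast_h,\theta^\ast_h)$; it is an M-matrix by Lemma~\ref{lm:Mmatrix}, hence nonsingular with $(\mathbf{A}^\ast)^{-1}\ge0$ by Lemma~\ref{lm:WCDDLmatrix}, and it satisfies $\mathbf{A}^\ast u_h=F_h(a^\ast_h,\theta^\ast_h)$, equivalently $\widetilde{\mathcal{L}}_{a^\ast,\theta^\ast}[\hat{u}]_{i,j}=-2\sqrt{a^\ast_{i,j}(1-a^\ast_{i,j})f_{i,j}}$. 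Let $W^\pm:=\mathcal{I}_h\zeta\pm\hat{u}$. By linearity and Step~1, $\widetilde{\mathcal{L}}_{a^\ast,\theta^\ast}[W^\pm]_{i,j}=\widetilde{\mathcal{L}}_{a^\ast,\theta^\ast}[\zeta]_{i,j}\mp2\sqrt{a^\ast_{i,j}(1-a^\ast_{i,j})f_{i,j}}\le-\|\sqrt{f}\|_\infty\mp2\sqrt{a^\ast_{i,j}(1-a^\ast_{i,j})f_{i,j}}\le0$, the last step using $2\sqrt{a(1-a)f_{i,j}}\le\sqrt{f_{i,j}}\le\|\sqrt{f}\|_\infty$ for the minus case. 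Since the off-diagonal and boundary-stencil entries of $\mathbf{A}^\ast$ are nonpositive and the row sum of $\mathbf{A}^\ast$ at $\mathbf{x}_{i,j}$ equals the total mass $\beta_{i,j}$ of its boundary coefficients (zero for interior-to-interior rows, positive near $\partial\Omega$), this gives $(\mathbf{A}^\ast W^\pm_h)_{i,j}=\widetilde{\mathcal{L}}_{a^\ast,\theta^\ast}[W^\pm]_{i,j}+\sum_{\mathbf{b}\in\partial\Omega}\!|\text{coeff}_{\mathbf{b}}|\,W^\pm(\mathbf{b})\le0+\beta_{i,j}\max_{\partial\Omega}W^\pm=(\mathbf{A}^\ast\mathbf{1})_{i,j}\,\max_{\partial\Omega}W^\pm$. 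Hence $\mathbf{A}^\ast W^\pm_h\le\mathbf{A}^\ast\!\left[(\max_{\partial\Omega}W^\pm)\mathbf{1}\right]$ componentwise, and applying $(\mathbf{A}^\ast)^{-1}\ge0$ yields $W^\pm\le\max_{\partial\Omega}W^\pm$ at every interior grid point, i.e.\ $\mathcal{I}_h\zeta\pm\hat{u}$ attains its maximum over $\{\mathbf{x}_{i,j}\in\Omega\}\cup\partial\Omega$ on $\partial\Omega$.

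The main obstacle is Step~1 in the semi-Lagrangian case: one must verify $(\delta_{zz}\zeta)_{i,j},(\delta_{ww}\zeta)_{i,j}\ge\|\sqrt{f}\|_\infty$ uniformly in the rotation $\theta_{i,j}$ and for every truncated boundary stencil, which rests on the overshoot property $\mathcal{I}_h\zeta\ge\zeta$ of bilinear interpolation applied to the convex quadratic $\zeta$. The secondary technical point is checking $\mathbf{A}^\ast\mathbf{1}=(\beta_{i,j})$, i.e.\ that the row sums of $\mathbf{A}^\ast$ are exactly the boundary-coefficient masses; this is what makes the sign of $\max_{\partial\Omega}W^\pm$ irrelevant and lets the comparison go through in a single application of the M-matrix inverse.
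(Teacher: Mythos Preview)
Your argument is correct, and Step~1 is essentially the same barrier computation the paper carries out (and which it already packaged in Lemma~\ref{lm:stability_inequality}). The difference lies in Step~2. The paper argues \emph{locally}: it fixes the optimal controls, introduces the stencil operator $\mathcal{S}$, shows $\mathcal{S}[\mathcal{I}_h\zeta\pm\hat u](\mathbf{x}_{i,j})\le 0$, and then runs a direct discrete-maximum-principle propagation: if the maximum were at an interior node, the sign structure of $\mathcal{S}$ forces every stencil neighbor to share that value, and the connectivity property established in Lemma~\ref{lm:Mmatrix} then walks this equality out to a boundary point. You instead argue \emph{globally}: you encode the same inequality as $\mathbf{A}^\ast W^\pm_h\le(\mathbf{A}^\ast\mathbf{1})\max_{\partial\Omega}W^\pm$, invoke the row-sum identity $(\mathbf{A}^\ast\mathbf{1})_{i,j}=\beta_{i,j}$ (which indeed holds for all three stencil types, including the truncated one), and finish with $(\mathbf{A}^\ast)^{-1}\ge 0$. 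Your route is cleaner once the M-matrix machinery of Lemmas~\ref{lm:WCDDLmatrix}--\ref{lm:Mmatrix} is available and sidesteps the neighbor-by-neighbor contradiction; the paper's route is more self-contained in that it only uses the sign pattern and connectivity of the stencil rather than invertibility of $\mathbf{A}^\ast$. Either way the substantive analytic input---the overshoot $\mathcal{I}_h\zeta\ge\zeta$ for the convex quadratic and the bound $2\sqrt{a(1-a)f_{i,j}}\le\|\sqrt{f}\|_\infty$---is identical.
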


\begin{proof}
Without loss of generality, let us consider again a grid point $\mathbf{x}_{i,j}\notin \partial\Omega$ where semi-Lagrangian wide stencil discretization is applied and boundary terms occur with $\mathbf{x}_{i,j}+\sqrt{h}(\mathbf{e}_z)_{i,j}$ relocated to $\mathbf{x}_{i,j}+\eta_1(\mathbf{e}_z)_{i,j}$. Assume that the control pair is fixed. Define a linear stencil operator on an arbitrary function $u$ at $\mathbf{x}_{i,j}$ as
\begin{equation*}
\renewcommand*{\arraystretch}{1.2}
\begin{array}{rl}
\mathcal{S}[u](\mathbf{x}_{i,j})
&
\equiv
2\left(
\frac{a_{i,j}}{\eta_1 \sqrt{h}} + \frac{1-a_{i,j}}{h}
\right)
u|_{\mathbf{x}_{i,j}}
-\frac{a_{i,j}}{\sqrt{h} \frac{\eta_1+\sqrt{h}}{2}}
u|_{\mathbf{x}_{i,j}-\sqrt{h}(\mathbf{e}_z)_{i,j}}
\\
&
-\frac{a_{i,j}}{\eta_1 \frac{\eta_1+\sqrt{h}}{2}}
u|_{\mathbf{x}_{i,j}+\eta_1 (\mathbf{e}_z)_{i,j}}
-\frac{1-a_{i,j}}{h}
u|_{\mathbf{x}_{i,j}+\sqrt{h}(\mathbf{e}_w)_{i,j}}
-\frac{1-a_{i,j}}{h}
u|_{\mathbf{x}_{i,j}-\sqrt{h}(\mathbf{e}_w)_{i,j}}.
\end{array}
\end{equation*}
We note that the relocated stencil point is also included in the operator. Then we have
$\mathcal{S}[\mathcal{I}_h\zeta](\mathbf{x}_{i,j})
\leq \mathcal{S}[\zeta](\mathbf{x}_{i,j})
= -\|\sqrt{f}\|_\infty$,
and
$\mathcal{S}[\hat{u}](\mathbf{x}_{i,j})
= -2\sqrt{a_{i,j}(1-a_{i,j})f_{i,j}}$.
As a result, we have
$\mathcal{S}[\mathcal{I}_h\zeta \pm \hat{u}](\mathbf{x}_{i,j})
= -\|\sqrt{f}\|_\infty \pm 2\sqrt{a_{i,j}(1-a_{i,j})f_{i,j}} \leq 0$.

Now assume that $\mathcal{I}_h\zeta \pm \hat{u}$ achieves its maximum at this grid point $\mathbf{x}_{i,j}$. Next we prove that
$
(\mathcal{I}_h\zeta \pm \hat{u})|_{\mathbf{y}}
= (\mathcal{I}_h\zeta \pm \hat{u})|_{\mathbf{x}_{i,j}}
$
for any stencil point $\mathbf{y}$ connected to $\mathbf{x}_{i,j}$, namely, for any
$\mathbf{y}\in \{
\mathbf{x}_{i,j}+\eta_1 (\mathbf{e}_z)_{i,j},
\mathbf{x}_{i,j}-\sqrt{h}(\mathbf{e}_z)_{i,j},
\mathbf{x}_{i,j}\pm\sqrt{h}(\mathbf{e}_w)_{i,j}
\}$.
This can be proved by contradiction. Assume that there exists at least one stencil point where the strict inequality holds, namely,
$
(\mathcal{I}_h\zeta \pm \hat{u})|_{\mathbf{y}}
< (\mathcal{I}_h\zeta \pm \hat{u})|_{\mathbf{x}_{i,j}}
$.
Then
\begin{equation*}
\renewcommand*{\arraystretch}{1.2}
\begin{array}{rl}
\mathcal{S}[\mathcal{I}_h\zeta \pm \hat{u}](\mathbf{x}_{i,j})
&
> \left[
2\left(
\frac{a_{i,j}}{\eta_1 \sqrt{h}} + \frac{1-a_{i,j}}{h}
\right)
\right.
\\
&
\left.
-\frac{a_{i,j}}{\sqrt{h} \frac{\eta_1+\sqrt{h}}{2}}
-\frac{a_{i,j}}{\eta_1 \frac{\eta_1+\sqrt{h}}{2}}
-\frac{1-a_{i,j}}{h}
-\frac{1-a_{i,j}}{h}
\right]
(\mathcal{I}_h\zeta \pm \hat{u})|_{\mathbf{x}_{i,j}}
= 0,
\end{array}
\end{equation*}
which contradicts with 
$\mathcal{S}[\mathcal{I}_h\zeta \pm \hat{u}](\mathbf{x}_{i,j}) \leq 0$.
The key point of this result is that
$
(\mathcal{I}_h\zeta \pm \hat{u})|_{\mathbf{x}_{i,j}+\eta_1 (\mathbf{e}_z)_{i,j}}
= (\mathcal{I}_h\zeta \pm \hat{u})|_{\mathbf{x}_{i,j}}
$.
That is, $\mathcal{I}_h\zeta \pm \hat{u}$ achieves its maximum at the boundary point $\mathbf{x}_{i,j}+\eta_1 (\mathbf{e}_z)_{i,j} \in \partial\Omega$.

In general, consider any grid point $\mathbf{x}_{i,j}\notin\partial \Omega$. Assume that $\mathcal{I}_h\zeta \pm \hat{u}$ achieves its maximum at $\mathbf{x}_{i,j}$. One can prove in the same fashion that
$
(\mathcal{I}_h\zeta \pm \hat{u})|_{\mathbf{y}}
= (\mathcal{I}_h\zeta \pm \hat{u})|_{\mathbf{x}_{i,j}}
$
for any stencil point $\mathbf{y}$ connected to $\mathbf{x}_{i,j}$. Then by the connectivity property (see the proof of Lemma \ref{lm:Mmatrix}), there exists a boundary point $\mathbf{z}\in\partial\Omega$, such that
$
(\mathcal{I}_h\zeta \pm \hat{u})|_{\mathbf{z}}
= (\mathcal{I}_h\zeta \pm \hat{u})|_{\mathbf{x}_{i,j}}
$.
Hence, $\mathcal{I}_h\zeta \pm \hat{u}$ achieves its maximum at the boundary point $\mathbf{z}\in \partial \Omega$.
\hfill
\end{proof}

\begin{lemma}
\label{lm:comparison_bc}
Let $\Omega$ be a strictly convex domain. Assume that Lemma \ref{lm:comparison_max} holds. Define
\begin{equation*}
\overline{u}(\mathbf{x})
\equiv \limsup_{\substack{h\to 0, \, \mathbf{y}\to \mathbf{x}}} u_h(\mathbf{y}),
\quad
\underline{u}(\mathbf{x})
\equiv \liminf_{\substack{h\to 0, \, \mathbf{y}\to \mathbf{x}}} u_h(\mathbf{y}).
\end{equation*}
Then $\overline{u}(\mathbf{x})=\underline{u}(\mathbf{x})=g(\mathbf{x})$ for all $\mathbf{x}\in\partial \Omega$.
\end{lemma}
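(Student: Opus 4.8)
The plan is to construct, for each $\mathbf{x}_0\in\partial\Omega$ and each $\epsilon>0$, smooth barrier functions $w^+_\epsilon,w^-_\epsilon$ on $\overline{\Omega}$ with $w^-_\epsilon\le g\le w^+_\epsilon$ on $\partial\Omega$ and $w^\pm_\epsilon(\mathbf{x}_0)=g(\mathbf{x}_0)\pm\epsilon$, and to show that $w^+_\epsilon$ is a discrete supersolution and $w^-_\epsilon$ a discrete subsolution of the scheme $\mathcal{F}_h(\mathbf{x}_{i,j},u_h)=0$. A discrete comparison argument then sandwiches $w^-_\epsilon(\mathbf{x}_{i,j})\le u_h(\mathbf{x}_{i,j})\le w^+_\epsilon(\mathbf{x}_{i,j})$ at every grid point and for every $h$; passing to the limit $\mathbf{y}\to\mathbf{x}_0$, $h\to0$ and using continuity of $w^\pm_\epsilon$ gives $g(\mathbf{x}_0)-\epsilon\le\underline{u}(\mathbf{x}_0)\le\overline{u}(\mathbf{x}_0)\le g(\mathbf{x}_0)+\epsilon$, and $\epsilon\to0$ finishes the proof.

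First I would record the geometric input: since $\Omega$ is strictly convex, at $\mathbf{x}_0$ there is a supporting hyperplane meeting $\overline{\Omega}$ only at $\mathbf{x}_0$, i.e.\ an affine function $\ell(\mathbf{x})=\mathbf{n}\cdot(\mathbf{x}_0-\mathbf{x})$ with $\ell\ge0$ on $\overline{\Omega}$, $\ell>0$ on $\partial\Omega\setminus\{\mathbf{x}_0\}$, and, by compactness, $\inf\{\ell(\mathbf{z}):\mathbf{z}\in\partial\Omega,\ \|\mathbf{z}-\mathbf{x}_0\|\ge\delta\}>0$ for every $\delta>0$. The upper barrier is $w^+_\epsilon=g(\mathbf{x}_0)+\epsilon+C\ell$, with $C$ chosen (using continuity of $g$ near $\mathbf{x}_0$ and positivity of $\ell$ away from it) so that $w^+_\epsilon\ge g$ on $\partial\Omega$. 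Because $\ell$ is affine and every difference operator of Section~\ref{sec:discrete} ($\delta_{xx}$, $\delta_{yy}$, $\delta_{xy}^{[1]}$, $\delta_{xy}^{[2]}$, the wide stencils $\delta_{zz}$, $\delta_{ww}$, and their relocated-boundary variants) is exact on affine functions, one gets $[\mathbf{A}(a_h,\theta_h)\,w^+_{\epsilon,h}-F_h(a_h,\theta_h)]_{i,j}=2\sqrt{a_{i,j}(1-a_{i,j})f_{i,j}}\ge0$ for every admissible control, hence $\mathcal{F}_h(\mathbf{x}_{i,j},w^+_{\epsilon,h})\ge0$. The lower barrier is $w^-_\epsilon=g(\mathbf{x}_0)-\epsilon-C'\ell+\zeta(\cdot\,;\mathbf{x}_0)$, with $\zeta$ the quadratic of Lemma~\ref{lm:comparison_max}, so that $D^2w^-_\epsilon=\|\sqrt{f}\|_\infty I$. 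Repeating the stencil estimates in the proofs of Lemmas~\ref{lm:stability_inequality} and~\ref{lm:comparison_max} — in particular using convexity of $\zeta$, so that $\mathcal{I}_h\zeta\ge\zeta$ and therefore the wide-stencil (and relocated-boundary) second differences of $w^-_\epsilon$ are $\ge\|\sqrt{f}\|_\infty$, while the $7$-point differences are exact for quadratics — one obtains $[\mathbf{A}(a_h,\theta_h)\,w^-_{\epsilon,h}-F_h(a_h,\theta_h)]_{i,j}\le-\|\sqrt{f}\|_\infty+2\sqrt{a_{i,j}(1-a_{i,j})f_{i,j}}\le0$ for every control, hence $\mathcal{F}_h(\mathbf{x}_{i,j},w^-_{\epsilon,h})\le0$.

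For the comparison, let $(a^*_{i,j},\theta^*_{i,j})$ realize the maximum defining $\mathcal{F}_h(\mathbf{x}_{i,j},u_h)=0$. Subtracting the identity $[\mathbf{A}(a^*_h,\theta^*_h)u_h-F_h(a^*_h,\theta^*_h)]_{i,j}=0$ from the supersolution inequality at the same control, and noting that the boundary ordering $w^+_\epsilon\ge g$ on $\partial\Omega$ together with the non-positivity of the off-diagonal entries of $\mathbf{A}$ (Lemma~\ref{lm:Mmatrix}) makes the boundary contribution to the right-hand side non-negative, one finds $\mathbf{A}(a^*_h,\theta^*_h)(w^+_{\epsilon,h}-u_h)\ge0$; since $\mathbf{A}(a^*_h,\theta^*_h)^{-1}\ge0$ (Lemma~\ref{lm:Mmatrix}), this forces $u_h\le w^+_{\epsilon,h}$ at every grid point, uniformly in $h$. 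The symmetric argument gives $u_h\ge w^-_{\epsilon,h}$. Taking $\limsup$/$\liminf$ along grid points $\mathbf{y}\to\mathbf{x}_0$ as $h\to0$ and using that $w^\pm_\epsilon\in C(\overline{\Omega})$ with $w^\pm_\epsilon(\mathbf{x}_0)=g(\mathbf{x}_0)\pm\epsilon$ yields the claim as $\epsilon\to0$. (Equivalently, the comparison can be carried out by the connectivity/maximum-principle argument in the proof of Lemma~\ref{lm:comparison_max}.)

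The hard part is the lower barrier $w^-_\epsilon$. A subsolution of the HJB operator must be at least as convex as the exact solution — indeed an affine function is a \emph{strict} supersolution when $f>0$ — so the convex quadratic $\zeta(\cdot\,;\mathbf{x}_0)$ is unavoidable in $w^-_\epsilon$; but then the requirement $w^-_\epsilon\le g$ on $\partial\Omega$ is delicate near $\mathbf{x}_0$, where mere strict convexity of $\Omega$ gives no quantitative lower bound on $\ell$ (no estimate $\ell(\mathbf{z})\gtrsim\|\mathbf{z}-\mathbf{x}_0\|_2^2$ is available in general). I would handle this by first shrinking the neighbourhood of $\mathbf{x}_0$ on which I rely on continuity of $g$ so that the quadratic term $\zeta(\cdot\,;\mathbf{x}_0)=\frac{1}{2}\|\sqrt{f}\|_\infty\|\cdot-\mathbf{x}_0\|_2^2$ is itself less than $\epsilon/2$ there, and only afterwards choosing $C'$ large using the uniform positivity of $\ell$ on the complement; this is exactly where strict convexity of $\Omega$ enters. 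A secondary technical point is checking that the relocated-boundary wide-stencil formulas preserve the sub/super-solution inequalities, which follows from the same convexity-of-$\zeta$ estimate already used in Lemmas~\ref{lm:stability_inequality} and~\ref{lm:comparison_max}.
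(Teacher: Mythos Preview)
Your barrier argument is correct and is essentially the approach the paper defers to (Lemma 6.4 of \cite{feng2016convergent}): both use strict convexity of $\Omega$ together with the quadratic $\zeta$ to squeeze $u_h$ near $\partial\Omega$. The only difference is packaging: you build the barriers $w^\pm_\epsilon$ explicitly as an affine supporting function $\ell$ plus (for the lower barrier) $\zeta(\cdot;\mathbf{x}_0)$ and then verify the discrete sub/supersolution inequalities by hand, whereas the paper has already recorded those inequalities as Lemma~\ref{lm:comparison_max} (that $\mathcal{I}_h\zeta\pm\hat{u}$ attains its maximum on $\partial\Omega$ for \emph{every} center $\mathbf{p}$) and then simply moves $\mathbf{p}$ far along the outward normal at $\mathbf{x}_0$ --- which, since $\zeta(\cdot;\mathbf{p})-\zeta(\mathbf{x}_0;\mathbf{p})=\zeta(\cdot;\mathbf{x}_0)+\|\sqrt{f}\|_\infty\,r\,\ell(\cdot)$ for $\mathbf{p}=\mathbf{x}_0+r\mathbf{n}$, reproduces exactly your affine-plus-quadratic barrier with $C'=\|\sqrt{f}\|_\infty r$.
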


\begin{proof}
Once Lemma \ref{lm:comparison_max} holds, the proof follows Lemma 6.4 in \cite{feng2016convergent}.
\end{proof}

Lemma \ref{lm:comparison_bc} is essentially the comparison result on the boundary $\partial\Omega$. Now we are ready to extend the comparison result to the entire computational domain $\overline{\Omega}$.

\begin{lemma}
\label{lm:comparison_subsup}
Given that the finite difference discretization (\ref{eq:discretecomplete1})-(\ref{eq:discretecomplete2}) satisfies consistency, stability and monotonicity, $\overline{u}(\mathbf{x})$ and $\underline{u}(\mathbf{x})$ are respectively the viscosity subsolution and supersolution of the Dirichlet problem (\ref{eq:MAE2}).
\end{lemma}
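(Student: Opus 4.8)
The plan is to run the classical Barles--Souganidis convergence argument \cite{barles1991convergence}, feeding it the three properties already established --- consistency (Lemma~\ref{lm:consistency}), stability (Lemma~\ref{lm:stability}), and monotonicity (the inequalities~(\ref{eq:monotonicity})) --- together with the boundary identification in Lemma~\ref{lm:comparison_bc}. Stability first guarantees that $\{u_h\}$ is bounded in $L^\infty$ uniformly in $h$, so $\overline{u}$ and $\underline{u}$ are finite on $\overline{\Omega}$; by construction $\overline{u}$ is upper semi-continuous and $\underline{u}$ lower semi-continuous, and $\underline{u}\le\overline{u}$. I will prove that $\overline{u}$ is a viscosity subsolution of~(\ref{eq:MAE2}); the proof that $\underline{u}$ is a supersolution is obtained by interchanging maxima with minima, reversing every inequality and replacing $\liminf$ by $\limsup$ (invoking the second half of Lemma~\ref{lm:consistency}), so I only carry out the subsolution case.

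Let $\varphi\in C^\infty(\overline{\Omega})$ and $\mathbf{\hat{x}}\in\overline{\Omega}$ be such that $\overline{u}-\varphi$ has a local maximum at $\mathbf{\hat{x}}$; normalise $\overline{u}(\mathbf{\hat{x}})=\varphi(\mathbf{\hat{x}})$ and, replacing $\varphi$ by $\varphi(\mathbf{x})+|\mathbf{x}-\mathbf{\hat{x}}|^4$ (which does not change $D^2\varphi(\mathbf{\hat{x}})$), assume the maximum is strict on a closed ball $\overline{B_r(\mathbf{\hat{x}})}\cap\overline{\Omega}$. If $\mathbf{\hat{x}}\in\partial\Omega$, Lemma~\ref{lm:comparison_bc} gives $\overline{u}(\mathbf{\hat{x}})=g(\mathbf{\hat{x}})$, so the boundary branch of $\mathcal{F}_*$ at $\mathbf{\hat{x}}$ equals $\overline{u}(\mathbf{\hat{x}})-g(\mathbf{\hat{x}})=0$ and hence $\mathcal{F}_*(\mathbf{\hat{x}},\overline{u}(\mathbf{\hat{x}}),D^2\varphi(\mathbf{\hat{x}}))\le 0$, which is the subsolution inequality. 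Assume now $\mathbf{\hat{x}}\in\Omega$. By the standard property of upper relaxed limits there are mesh sizes $h_n\to 0$ and grid points $\mathbf{x}_{i_n,j_n}\to\mathbf{\hat{x}}$ at which $u_{h_n}-\varphi$ attains its maximum over $\overline{B_r(\mathbf{\hat{x}})}\cap\overline{\Omega}$, with $\xi_n\equiv(u_{h_n}-\varphi)(\mathbf{x}_{i_n,j_n})\to 0$. For $n$ large, $\mathbf{x}_{i_n,j_n}$ is interior to the ball and --- because the reach of the stencil ($7$-point, or the wide stencil with arms of length $\sqrt{h_n}$, its bilinear interpolation nodes, and any truncated boundary nodes) is $O(\sqrt{h_n})\to 0$ --- every node entering the discrete equation at $\mathbf{x}_{i_n,j_n}$ lies in $\overline{B_r(\mathbf{\hat{x}})}\cap\overline{\Omega}$ as well. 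Hence $u_{h_n}\le\varphi_{h_n}+\xi_n$ at all those nodes with equality at $\mathbf{x}_{i_n,j_n}$; monotonicity~(\ref{eq:monotonicity}) and the fact that $u_{h_n}$ solves~(\ref{eq:discretecomplete1})--(\ref{eq:discretecomplete2}) then yield
\[
0=\mathcal{F}_h(\mathbf{x}_{i_n,j_n},u_{h_n})\ \geq\ \mathcal{F}_h(\mathbf{x}_{i_n,j_n},\varphi_{h_n}+\xi_n).
\]
Taking $\liminf$ over $n$ and applying consistency (Lemma~\ref{lm:consistency}),
\[
0\ \geq\ \liminf_{n\to\infty}\mathcal{F}_h(\mathbf{x}_{i_n,j_n},\varphi_{h_n}+\xi_n)\ \geq\ \mathcal{F}_*\big(\mathbf{\hat{x}},\varphi(\mathbf{\hat{x}}),D^2\varphi(\mathbf{\hat{x}})\big)=\mathcal{F}_*\big(\mathbf{\hat{x}},\overline{u}(\mathbf{\hat{x}}),D^2\varphi(\mathbf{\hat{x}})\big),
\]
which is precisely the subsolution inequality of Definition~\ref{def:viscosity}.

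Convexity of $\overline{u}$ and $\underline{u}$, which Definition~\ref{def:viscosity} also requires, comes from the discrete convexity built into the scheme. At any grid point $\mathbf{x}_{i,j}$, evaluating the maximum $\mathcal{F}_h(\mathbf{x}_{i,j},u_h)=0$ at the controls $a_{i,j}\in\{0,1\}$, $\theta_{i,j}\neq0$ --- which lie in the wide-stencil region $\Gamma_{i,j}^3$ --- forces $(\delta_{zz}u)_{i,j}\ge 0$ and $(\delta_{ww}u)_{i,j}\ge 0$ for every admissible angle, i.e. every rotated second difference of $u_h$ is non-negative (the coordinate directions $\theta_{i,j}=0$ following by continuity in $\theta_{i,j}$). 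Re-running the touching-function extraction above, but now comparing these rotated second differences (which, like $\mathcal{F}_h$, respect the one-sided ordering $u_{h_n}\le\varphi_{h_n}+\xi_n$ with equality at the centre) and letting the wide-stencil length shrink, shows $e^{T}D^2\varphi(\mathbf{\hat{x}})\,e\ge 0$ for every unit vector $e$ whenever a $C^\infty$ function touches $\overline{u}$ from above at an interior point; hence $D^2\overline{u}\ge 0$ in the viscosity sense and $\overline{u}$ is convex, and the same computation applied to the minimum extraction gives convexity of $\underline{u}$.

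The step I expect to need the most care is the one already flagged: passing from the global finite-grid maximiser $\mathbf{x}_{i_n,j_n}$ to the assertion that, for $n$ large, \emph{every} node touched by the discrete operator there --- the wide-stencil arms of length $\sqrt{h_n}$, their four bilinear-interpolation nodes, and any boundary nodes created by stencil truncation --- remains inside the ball on which $u_{h_n}-\varphi$ is maximised, since this is exactly what licenses the one-sided comparison $u_{h_n}\le\varphi_{h_n}+\xi_n$ that feeds monotonicity, and it works only because the stencil reach is $O(\sqrt{h_n})\to 0$ rather than $O(1)$. Everything else is bookkeeping: consistency supplies the limit, stability the finiteness of $\overline{u},\underline{u}$, and Lemma~\ref{lm:comparison_bc} the behaviour on $\partial\Omega$.
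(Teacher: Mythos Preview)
Your proposal is correct and follows the same approach as the paper, whose proof is simply a one-line citation to Theorem~2.1 of Barles--Souganidis~\cite{barles1991convergence}; you have written out that argument in detail for the present scheme. Your explicit handling of boundary points via Lemma~\ref{lm:comparison_bc} and your extra paragraph on convexity of $\overline{u},\underline{u}$ go slightly beyond what the paper records, but neither departs from the Barles--Souganidis framework.
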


\begin{proof}
See the proof of Theorem 2.1 in \cite{barles1991convergence}.
\end{proof}

\begin{lemma}[Strong comparison principle]
Let $\Omega$ be a strictly convex domain. Then the finite difference discretization (\ref{eq:discretecomplete1})-(\ref{eq:discretecomplete2}) satisfies $\overline{u}\leq \underline{u}$ in $\overline{\Omega}$.
\end{lemma}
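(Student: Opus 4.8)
The plan is to deduce $\overline{u}\le\underline{u}$ from the (interior) comparison principle for the Monge-Amp\`ere --- equivalently, the HJB --- Dirichlet problem, feeding it the boundary identification of Lemma \ref{lm:comparison_bc} and the sub/supersolution property of Lemma \ref{lm:comparison_subsup}. Recall from Section \ref{sec:viscosity} that degenerate ellipticity of $\mathcal{F}$ together with $\overline{\Omega}$ bounded and convex yields uniqueness of the viscosity solution of (\ref{eq:MAE2}); the mechanism behind that uniqueness is precisely a strong comparison principle, namely that any upper semicontinuous viscosity subsolution which is bounded above on $\partial\Omega$ by a lower semicontinuous viscosity supersolution must remain so throughout $\overline{\Omega}$. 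So the whole task reduces to checking the hypotheses of that principle for the pair $(\overline{u},\underline{u})$.

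I would carry out the following steps. First, invoke Lemma \ref{lm:comparison_subsup}: since consistency (Lemma \ref{lm:consistency}), stability (Lemma \ref{lm:stability}) and monotonicity have been established for the scheme (\ref{eq:discretecomplete1})--(\ref{eq:discretecomplete2}), the half-relaxed limits $\overline{u}$ and $\underline{u}$ are respectively a viscosity subsolution and a viscosity supersolution of the Dirichlet problem (\ref{eq:MAE2}). Second, invoke Lemma \ref{lm:comparison_bc}, whose hypothesis --- strict convexity of $\Omega$ --- is exactly the one assumed here: on $\partial\Omega$ one has $\overline{u}=\underline{u}=g$, hence in particular $\overline{u}\le\underline{u}$ on $\partial\Omega$. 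Third, apply the strong comparison principle recalled above to conclude $\overline{u}\le\underline{u}$ in $\overline{\Omega}$. Finally, I would point out that the reverse inequality $\underline{u}\le\overline{u}$ is immediate from the definitions of the half-relaxed limits, so that in fact $\overline{u}=\underline{u}$; together with Lemmas \ref{lm:consistency}, \ref{lm:stability} and monotonicity, the Barles--Souganidis theorem \cite{barles1991convergence} then gives local uniform convergence of $u_h$ to the unique viscosity solution of (\ref{eq:MAE2}).

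The main obstacle is the strong comparison principle for the PDE itself. The Monge-Amp\`ere operator is degenerate elliptic but carries no zeroth-order ($u$-)dependence, so it is not ``proper'' in the sense of the Crandall--Ishii--Lions user's guide, and the generic comparison argument does not apply verbatim. What rescues it is the concavity of $M\mapsto(\det M)^{1/2}$ on the cone of positive semidefinite symmetric matrices --- equivalently, the Bellman representation (\ref{eq:convertHJB}) as a supremum of linear degenerate-elliptic operators --- combined with the strict convexity of $\Omega$, which supplies the boundary barriers needed to perturb a subsolution into a strict one and push the doubling-of-variables argument all the way to $\partial\Omega$. I would rely on the existing uniqueness theory for the Monge-Amp\`ere Dirichlet problem (\cite{crandall1992user,gutierrez2012monge}, see also \cite{froese2011convergent}) for this interior comparison, stressing that the genuinely new ingredient here is the boundary analysis carried out in Lemmas \ref{lm:comparison_max}--\ref{lm:comparison_bc}, since the literature does not record a proof that the Dirichlet datum of (\ref{eq:MAE2}) is attained in the viscosity sense.
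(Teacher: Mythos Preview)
Your proposal is correct and follows essentially the same route as the paper: invoke Lemma \ref{lm:comparison_subsup} for the sub/supersolution property, Lemma \ref{lm:comparison_bc} for the boundary inequality $\overline{u}\le\underline{u}$ on $\partial\Omega$, and then apply a comparison principle for the PDE; the paper simply cites Theorem 3.3 in \cite{crandall1992user} for this last step. Your additional remarks about the lack of zeroth-order dependence and the role of strict convexity are a more careful justification of why that comparison principle applies than the paper itself provides, and your final paragraph about $\overline{u}=\underline{u}$ anticipates the subsequent Barles--Souganidis theorem rather than belonging to this lemma.
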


\begin{proof}
Since $\overline{u}$ and $\underline{u}$ are respectively the viscosity subsolution and supersolution (Lemma \ref{lm:comparison_subsup}), and $\overline{u}\leq \underline{u}$ on $\partial\Omega$ (Lemma \ref{lm:comparison_bc}), by Theorem 3.3 in \cite{crandall1992user}, we conclude that $\overline{u}\leq \underline{u}$ in $\overline{\Omega}$.
\end{proof}

\subsection{Convergence of the numerical solution to the viscosity solution}

Once consistency, stability, monotonicity and strong comparison principle are proved, Barles-Souganidis theorem \cite{barles1991convergence} guarantees the convergence of the numerical solution to the viscosity solution.

\begin{theorem}[Barles-Souganidis theorem]
Let $\Omega$ be a strictly convex domain. Given that the finite difference discretization (\ref{eq:discretecomplete1})-(\ref{eq:discretecomplete2}) satisfies consistency, stability, monotonicity and strong comparison principle, the numerical solution converges to the viscosity solution of the Dirichlet problem (\ref{eq:MAE2}).
\end{theorem}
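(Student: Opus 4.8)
The plan is to invoke the abstract convergence framework of Barles and Souganidis \cite{barles1991convergence} essentially verbatim, since every structural ingredient it demands has already been isolated in Subsections~\ref{subsec:consistency}--\ref{subsec:comparison}. First I would introduce the upper and lower half-relaxed limits
\begin{equation*}
\overline{u}(\mathbf{x}) \equiv \limsup_{\substack{h\to 0, \, \mathbf{y}\to\mathbf{x}}} u_h(\mathbf{y}),
\qquad
\underline{u}(\mathbf{x}) \equiv \liminf_{\substack{h\to 0, \, \mathbf{y}\to\mathbf{x}}} u_h(\mathbf{y}),
\end{equation*}
exactly as in Lemma~\ref{lm:comparison_bc}, and observe that stability (Lemma~\ref{lm:stability}) makes the family $\{u_h\}$ uniformly bounded in $L^\infty(\overline{\Omega})$, so that $\overline{u}$ and $\underline{u}$ are finite-valued; by construction $\overline{u}$ is upper semicontinuous, $\underline{u}$ is lower semicontinuous, and $\underline{u}\leq\overline{u}$ on $\overline{\Omega}$.

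Next I would assemble the sub/supersolution property. Consistency (Lemma~\ref{lm:consistency}) together with monotonicity (Subsection~\ref{subsec:monotonicity}) and the perturbed-test-function argument shows that $\overline{u}$ is a viscosity subsolution and $\underline{u}$ a viscosity supersolution of $\mathcal{F}(\mathbf{x},u,D^2u)=0$ in the sense of Definition~\ref{def:viscosity}; this is precisely the content of Lemma~\ref{lm:comparison_subsup}. The behaviour at $\partial\Omega$ is supplied separately by Lemma~\ref{lm:comparison_bc} (which in turn rests on the boundary-maximum principle of Lemma~\ref{lm:comparison_max}), giving $\overline{u}=\underline{u}=g$ on $\partial\Omega$, so in particular $\overline{u}\leq\underline{u}$ there.

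Finally I would close the loop with the strong comparison principle: because $\overline{u}$ is a subsolution, $\underline{u}$ a supersolution, and $\overline{u}\leq\underline{u}$ on $\partial\Omega$, the comparison result of Subsection~\ref{subsec:comparison} (equivalently Theorem~3.3 in \cite{crandall1992user}) yields $\overline{u}\leq\underline{u}$ throughout $\overline{\Omega}$. Combined with the opposite inequality noted above, this forces $\overline{u}=\underline{u}=:u$; this common function is then both upper and lower semicontinuous, hence continuous, and is simultaneously a viscosity sub- and supersolution, i.e.\ the unique viscosity solution of the Monge-Amp\`ere problem (\ref{eq:MAE2}). Equality of the two half-relaxed limits is exactly the standard criterion for locally uniform convergence $u_h\to u$ on $\overline{\Omega}$, which is the assertion of the theorem.

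I do not anticipate a genuine obstacle here: the theorem is by design a bookkeeping assembly, via the half-relaxed-limits method, of the four properties established earlier. The only point needing mild care is making sure the consistency estimates are read off with the correct semicontinuous envelopes $\mathcal{F}^*$ and $\mathcal{F}_*$ of the (boundary-discontinuous) operator $\mathcal{F}$ from (\ref{eq:MAE2}), so that the subsolution and supersolution conclusions for $\overline{u}$ and $\underline{u}$ remain valid at $\partial\Omega$ --- which is exactly why the Dirichlet datum was folded into $\mathcal{F}$ and why Lemma~\ref{lm:comparison_bc} had to be proved on its own.
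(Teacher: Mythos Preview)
Your proposal is correct and follows exactly the approach the paper takes: the paper's own proof is a one-line citation to Theorem~2.1 of \cite{barles1991convergence}, and what you have written is precisely the standard half-relaxed-limits argument of that theorem, assembled from Lemmas~\ref{lm:consistency}, \ref{lm:stability}, \ref{lm:comparison_bc}, \ref{lm:comparison_subsup} and the strong comparison principle of Subsection~\ref{subsec:comparison}.
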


\begin{proof}
See Barles and Souganidis's proof of Theorem 2.1 in \cite{barles1991convergence}.
\hfill
\end{proof}


\section{Numerical Results}
\label{sec:numerical}

In this section, we will present numerical results for the Monge-Amp\`ere equation using our proposed mixed standard 7-point stencil and semi-Lagrangian wide stencil scheme. These numerical results show that the mixed scheme can achieve second order convergence rate whenever the standard 7-point stencils can be applied monotonically on the entire computational domain, and up to order one convergence rate otherwise. Compared to the pure semi-Lagrangian wide stencil scheme in \cite{feng2016convergent}, our proposed mixed scheme yields a smaller discretization error $\|u-u_h\|$ and a faster convergence rate. The examples we consider in this section come from \cite{froese2011convergent,benamou2010two}. We choose the tolerance of residual for the policy iteration to be $10^{-6}$. We let the initial guess of the numerical solution be the solution of
\begin{equation}
\renewcommand*{\arraystretch}{1}
\begin{array}{rll}
u_{xx} + u_{yy}
&
= 2\sqrt{f},
&
\quad
\text{ in } \Omega,
\\
u
&
= g, 
&
\quad
\text{ on } \partial\Omega,
\end{array}
\end{equation}
which corresponds to the solution of (\ref{eq:HJB1}) with $a=\frac{1}{2}$ and arbitrary $\theta$. We choose the grid size $N^2 = 32^2,64^2,\cdots,512^2$, and define the numerical convergence rate as
$
\log_2
\frac{\|u-u_h(\frac{N}{2})\|}
{\|u-u_h(N)\|}
$,
where $u_h(N)$ is the numerical solution on an $N\times N$ grid.

\begin{figure}[b!]
\begin{center}
\includegraphics[scale=0.35]{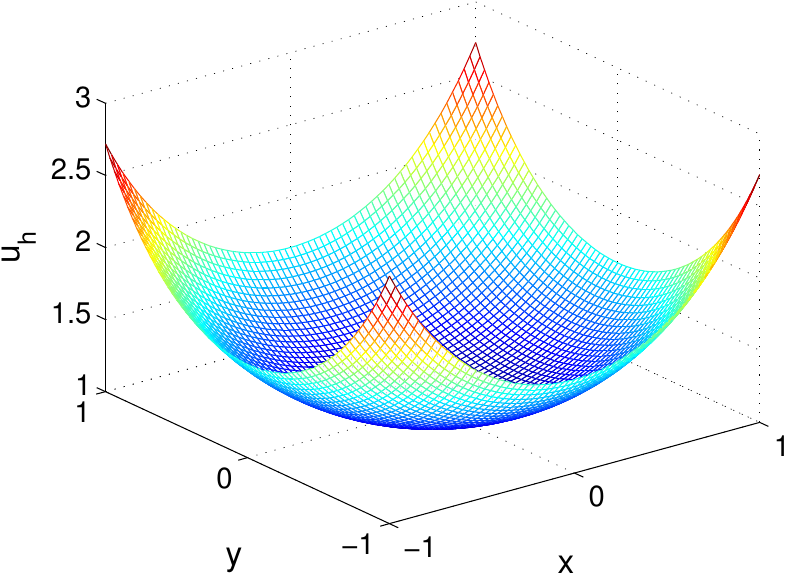}
\includegraphics[scale=0.4]{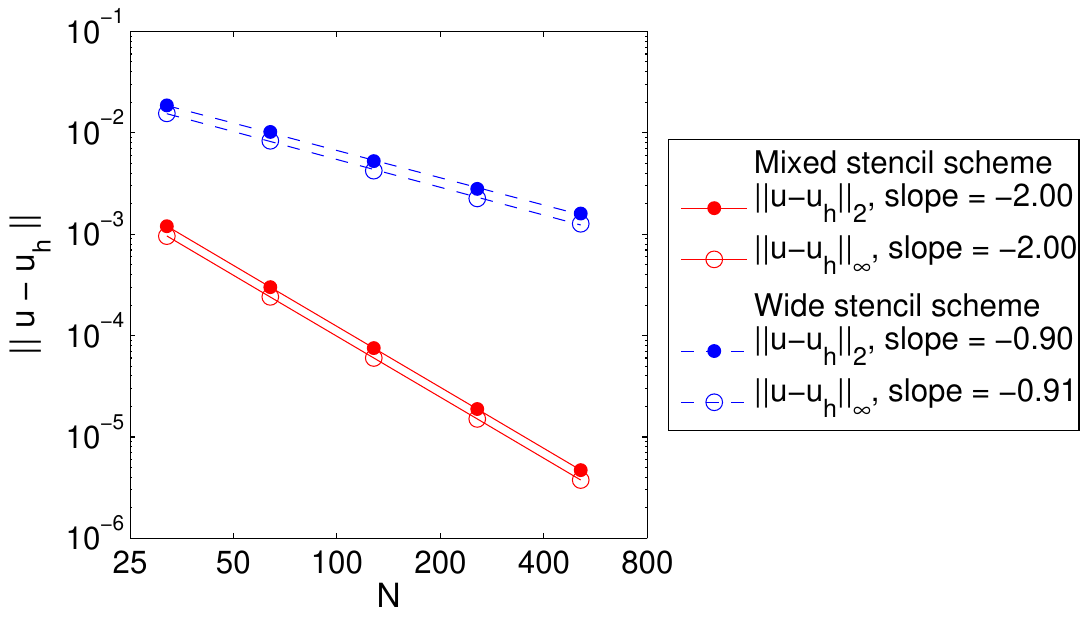}
\end{center}
\vspace{-3mm}
\hspace{2.2cm}
(1)
\hspace{5cm}
(2)
\caption{\label{fig:example1}
Numerical results of Example 1, where the exact solution is $u(x,y) = e^{\frac{1}{2}(x^2+y^2)}$.
(1) Numerical solution.
(2) Norms of the errors $\|u-u_h\|$. For the proposed mixed stencil scheme (red-solid), the convergence rates, indicated by the slopes, are $O(h^2)$ in both $L_2$ and $L_\infty$ norms. For the pure semi-Lagrangian wide stencil scheme (blue-dashed), the convergence rates are approximately $O(h)$ in both $L_2$ and $L_\infty$ norms.}
\end{figure}

\begin{table}[b!]
\footnotesize
\begin{center}
(1) Proposed mixed stencil scheme
\begin{tabular}{|c||c|c||c|c||c|}
\hline
$N$
& $\|u-u_h\|_2$
& \begin{tabular}{c}
Numerical
\\
convergence
\\
rate
\end{tabular}
& $\|u-u_h\|_\infty$
& \begin{tabular}{c}
Numerical
\\
convergence
\\
rate
\end{tabular}
& \begin{tabular}{c}
Number of
\\
policy
\\
iterations
\end{tabular}
\\
\hline
32
& 1.201$\times 10^{-3}$
& 
& 9.598$\times 10^{-4}$
& 
& 4
\\
64
& 3.009$\times 10^{-4}$
& 2.00
& 2.404$\times 10^{-4}$
& 2.00
& 4
\\
128
& 7.526$\times 10^{-5}$
& 2.00
& 6.013$\times 10^{-5}$
& 2.00
& 4
\\
256
& 1.882$\times 10^{-5}$
& 2.00
& 1.504$\times 10^{-5}$
& 2.00
& 4
\\
512
& 4.705$\times 10^{-6}$
& 2.00
& 3.759$\times 10^{-6}$
& 2.00
& 4
\\
\hline
\end{tabular}

\smallskip
(2) Pure semi-Lagrangian wide stencil scheme
\begin{tabular}{|c||c|c||c|c||c|}
\hline
$N$
& $\|u-u_h\|_2$
& \begin{tabular}{c}
Numerical
\\
convergence
\\
rate
\end{tabular}
& $\|u-u_h\|_\infty$
& \begin{tabular}{c}
Numerical
\\
convergence
\\
rate
\end{tabular}
& \begin{tabular}{c}
Number of
\\
policy
\\
iterations
\end{tabular}
\\
\hline
32
& 1.868$\times 10^{-2}$
& 
& 1.557$\times 10^{-2}$
& 
& 5
\\
64
& 1.020$\times 10^{-2}$
& 0.87
& 8.364$\times 10^{-3}$
& 0.90
& 5
\\
128
& 5.263$\times 10^{-3}$
& 0.95
& 4.240$\times 10^{-3}$
& 0.98
& 6
\\
256
& 2.801$\times 10^{-3}$
& 0.91
& 2.259$\times 10^{-3}$
& 0.91
& 5
\\
512
& 1.600$\times 10^{-3}$
& 0.81
& 1.268$\times 10^{-3}$
& 0.83
& 5
\\
\hline
\end{tabular}
\end{center}
\smallskip
\caption{\label{tab:example1}
Numerical results of Example 1, where the exact solution is $u(x,y) = e^{\frac{1}{2}(x^2+y^2)}$.
(1) Proposed mixed stencil scheme. The convergence rates in both $L_2$ and $L_\infty$ norms are $O(h^2)$.
(2) Pure semi-Lagrangian wide stencil scheme. The convergence rates in both $L_2$ and $L_\infty$ norms are approximately $O(h)$.}
\end{table}

\underline{Example 1.}
Start with
\begin{equation*}
\displaystyle
f(x,y) = (1+x^2+y^2)e^{x^2+y^2},
\qquad
\displaystyle
g(x,y) = e^{\frac{1}{2}(x^2+y^2)},
\qquad
\overline{\Omega} = [-1,1]\times[-1,1],
\end{equation*}
where the exact solution
$
u(x,y) = e^{\frac{1}{2}(x^2+y^2)}
$
is smooth. For this example, it turns out that the standard 7-point stencil discretization can be applied on the entire computational domain and still results in a monotone scheme, since the optimal control pair $(a^*,\theta^*)$ at every grid point is inside the 7-point-stencil regions $\Gamma^1\cup\Gamma^2\cup\partial\Gamma^0$.
Consequentially, the numerical solution converges at the optimal theoretical convergence rate $O(h^2)$; see Figure \ref{fig:example1}(2,red-solid) and Table \ref{tab:example1}(1). We observe that the computation is efficient, in the sense that the number of policy iterations remains a small constant 4 as $N$ increases.

We compare the proposed mixed scheme with the pure semi-Lagrangian wide stencil scheme in \cite{feng2016convergent}, where the wide stencils are applied on the entire computation domain. Figure \ref{fig:example1}(2,blue-dashed) and Table \ref{tab:example1}(2) show that the convergence rate of the pure wide stencil scheme is approximately first order. We note that order one is the optimal theoretical convergence rate for the pure wide stencil scheme; see Lemma \ref{lm:localconsistency}. The convergence rate using the proposed mixed scheme is significantly faster than the rate using the pure semi-Lagrangian wide stencil scheme.

\begin{figure}[b!]
\begin{center}
\includegraphics[scale=0.35]{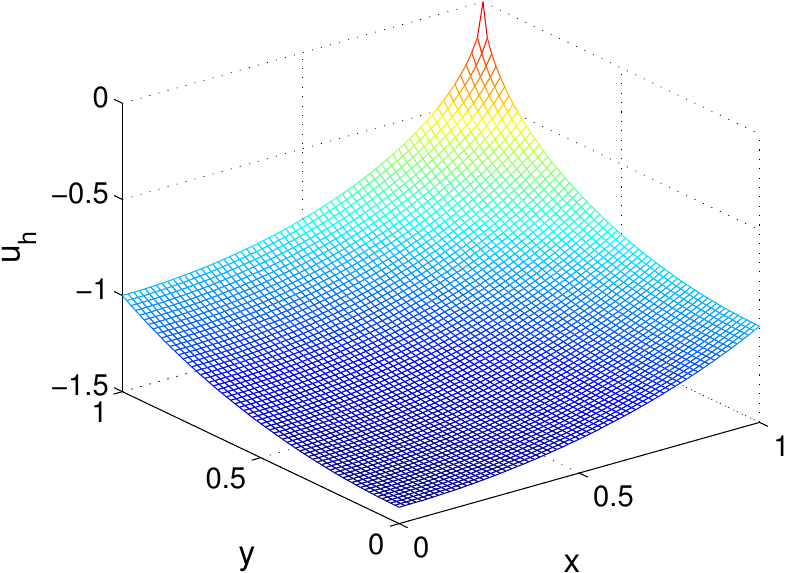}
\includegraphics[scale=0.4]{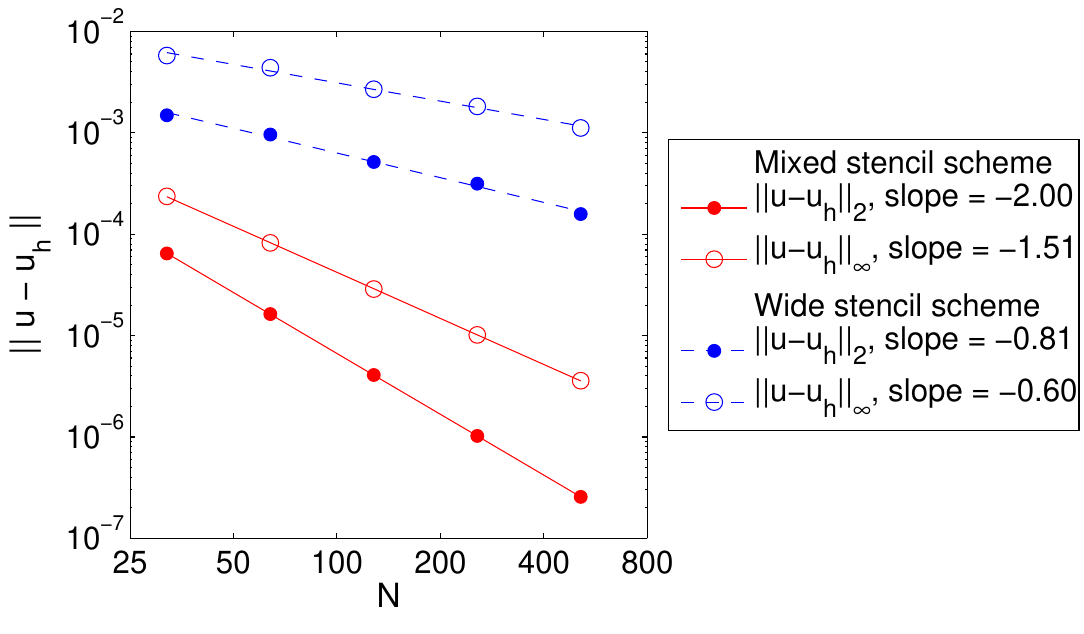}
\end{center}
\vspace{-3mm}
\hspace{2.2cm}
(1)
\hspace{5cm}
(2)
\caption{\label{fig:example2}
Numerical results of Example 2, where the exact solution is $u(x,y) = -\sqrt{2-x^2-y^2}$.
(1) Numerical solution.
(2) Norms of the errors $\|u-u_h\|$. For the proposed mixed stencil scheme (red-solid), the convergence rates, indicated by the slopes, are $O(h^2)$ in $L_2$ norm and $O(h^{1.5})$ in $L_\infty$ norm, respectively. For the pure semi-Lagrangian wide stencil scheme (blue-dashed), the convergence rates are worse than $O(h)$ in both $L_2$ and $L_\infty$ norms.}
\end{figure}

\begin{table}[b!]
\footnotesize
\begin{center}
(1) Proposed mixed stencil scheme
\begin{tabular}{|c||c|c||c|c||c|}
\hline
$N$
& $\|u-u_h\|_2$
& \begin{tabular}{c}
Numerical
\\
convergence
\\
rate
\end{tabular}
& $\|u-u_h\|_\infty$
& \begin{tabular}{c}
Numerical
\\
convergence
\\
rate
\end{tabular}
& \begin{tabular}{c}
Number of
\\
policy
\\
iterations
\end{tabular}
\\
\hline
32
& 6.450$\times 10^{-5}$
& 
& 2.359$\times 10^{-4}$
& 
& 4
\\
64
& 1.628$\times 10^{-5}$
& 1.99
& 8.211$\times 10^{-5}$
& 1.52
& 5
\\
128
& 4.084$\times 10^{-6}$
& 2.00
& 2.882$\times 10^{-5}$
& 1.51
& 5
\\
256
& 1.022$\times 10^{-6}$
& 2.00
& 1.015$\times 10^{-5}$
& 1.51
& 5
\\
512
& 2.557$\times 10^{-7}$
& 2.00
& 3.583$\times 10^{-6}$
& 1.50
& 5
\\
\hline
\end{tabular}

\smallskip
(2) Pure semi-Lagrangian wide stencil scheme
\begin{tabular}{|c||c|c||c|c||c|}
\hline
$N$
& $\|u-u_h\|_2$
& \begin{tabular}{c}
Numerical
\\
convergence
\\
rate
\end{tabular}
& $\|u-u_h\|_\infty$
& \begin{tabular}{c}
Numerical
\\
convergence
\\
rate
\end{tabular}
& \begin{tabular}{c}
Number of
\\
policy
\\
iterations
\end{tabular}
\\
\hline
32
& 1.493$\times 10^{-3}$
& 
& 5.799$\times 10^{-3}$
& 
& 5
\\
64
& 9.634$\times 10^{-4}$
& 0.63
& 4.394$\times 10^{-3}$
& 0.40
& 4
\\
128
& 5.166$\times 10^{-4}$
& 0.90
& 2.697$\times 10^{-3}$
& 0.70
& 5
\\
256
& 3.153$\times 10^{-4}$
& 0.71
& 1.824$\times 10^{-3}$
& 0.56
& 5
\\
512
& 1.583$\times 10^{-4}$
& 0.99
& 1.120$\times 10^{-3}$
& 0.70
& 5
\\
\hline
\end{tabular}
\end{center}
\smallskip
\caption{\label{tab:example2}
Numerical results of Example 2, where the exact solution is $u(x,y) = -\sqrt{2-x^2-y^2}$.
(1) Proposed mixed stencil scheme. The convergence rates in $L_2$ and $L_\infty$ norms are $O(h^2)$ and $O(h^{1.5})$, respectively.
(2) Pure semi-Lagrangian wide stencil scheme. The convergence rates in both $L_2$ and $L_\infty$ norms are worse than $O(h)$.}
\end{table}

\underline{Example 2.}
Consider
\begin{equation*}
f(x,y) = \frac{2}{(2-x^2-y^2)^2},
\qquad
g(x,y) = -\sqrt{2-x^2-y^2},
\qquad
\overline{\Omega} = [0,1]\times[0,1],
\end{equation*}
where $f$ is singular at $(1,1)$, and the exact solution is
$
u(x,y) = -\sqrt{2-x^2-y^2}
$.
Similar to Example 1, we can apply the standard 7-point stencil discretization monotonically on the entire $\Omega$. The convergence rates are $O(h^2)$ and $O(h^{1.5})$ in $L_2$ and $L_\infty$ norms respectively; see Figure \ref{fig:example2}(2,red-solid) and Table \ref{tab:example2}(1). As a comparison, if we applied the pure semi-Lagrangian wide stencil scheme, then the convergence rate is worse than $O(h)$; see Figure \ref{fig:example2}(2,blue-dashed) and Table \ref{tab:example2}(2).

\begin{figure}[b!]
\begin{center}
\includegraphics[scale=0.35]{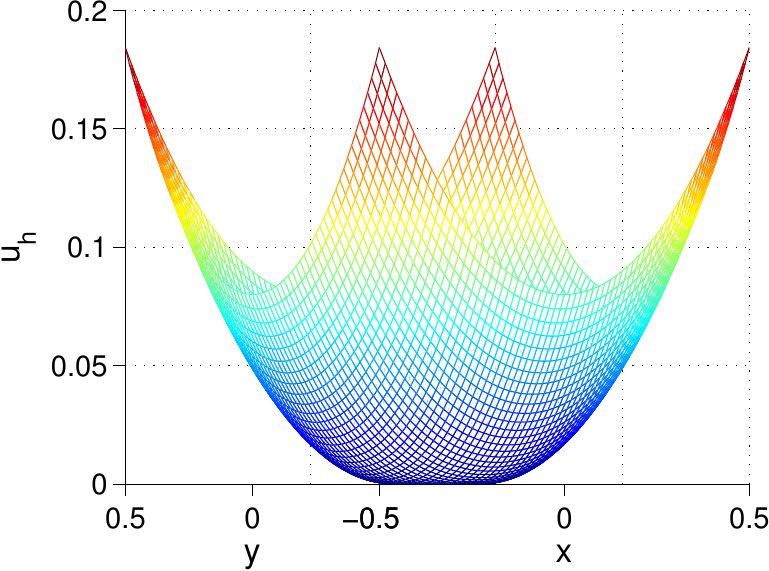}
\includegraphics[scale=0.4]{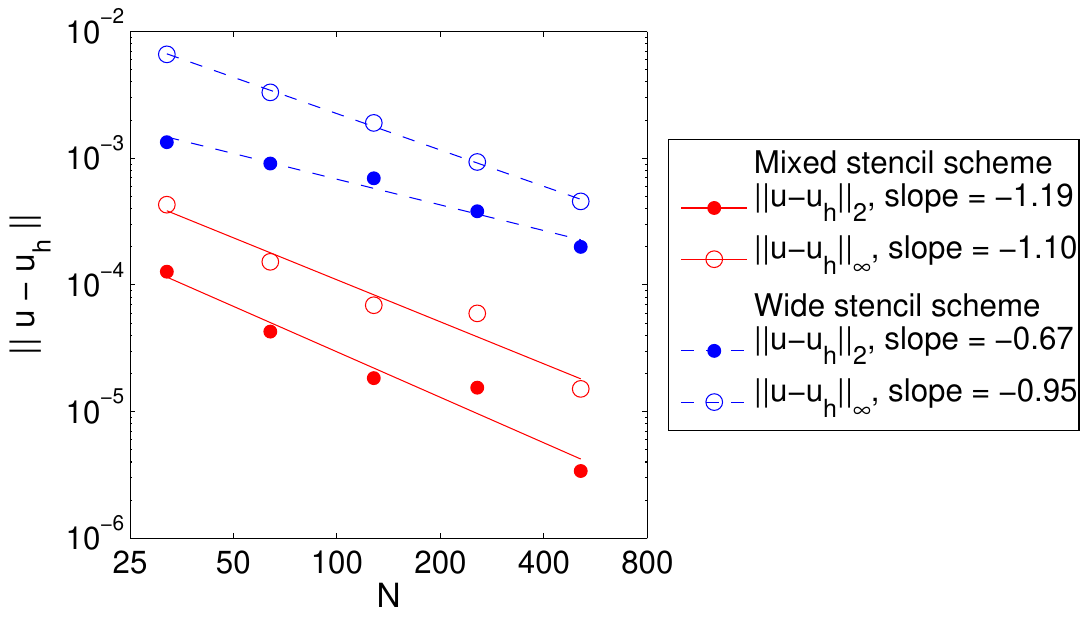}
\end{center}
\vspace{-3mm}
\hspace{2.2cm}
(1)
\hspace{5cm}
(2)
\caption{\label{fig:example3}
Numerical results of Example 3, where the exact solution is $\frac{1}{2}\max\left( \sqrt{x^2+y^2}- 0.1, \right.$ $\left. 0 \right)^2$.
(1) Numerical solution.
(2) Norms of the error $\|u-u_h\|$. For the proposed mixed stencil scheme (red-solid), the convergence rates, indicated by the slopes, are approximately $O(h)$ in both $L_2$ and $L_\infty$ norms. For the pure semi-Lagrangian wide stencil scheme (blue-dashed), the errors are larger than the mixed scheme, and the convergence rates are worse than $O(h)$ in both $L_2$ and $L_\infty$ norms.}
\end{figure}

\begin{table}[t!]
\footnotesize
\begin{center}
(1) Proposed mixed stencil scheme
\begin{tabular}{|c||c|c||c|c||c|}
\hline
$N$
& $\|u-u_h\|_2$
& \begin{tabular}{c}
Numerical
\\
convergence
\\
rate
\end{tabular}
& $\|u-u_h\|_\infty$
& \begin{tabular}{c}
Numerical
\\
convergence
\\
rate
\end{tabular}
& \begin{tabular}{c}
Number of
\\
policy
\\
iterations
\end{tabular}
\\
\hline
32
& 1.270$\times 10^{-4}$
& 
& 4.298$\times 10^{-4}$
& 
& 4
\\
64
& 4.273$\times 10^{-5}$
& 1.57
& 1.520$\times 10^{-4}$
& 1.50
& 6
\\
128
& 1.835$\times 10^{-5}$
& 1.22
& 6.907$\times 10^{-5}$
& 1.14
& 7
\\
256
& 1.544$\times 10^{-5}$
& 0.25
& 5.959$\times 10^{-5}$
& 0.21
& 9
\\
512
& 3.396$\times 10^{-6}$
& 2.18
& 1.513$\times 10^{-5}$
& 1.98
& 20
\\
\hline
\end{tabular}

\smallskip
(2) Pure semi-Lagrangian wide stencil scheme
\begin{tabular}{|c||c|c||c|c||c|}
\hline
$N$
& $\|u-u_h\|_2$
& \begin{tabular}{c}
Numerical
\\
convergence
\\
rate
\end{tabular}
& $\|u-u_h\|_\infty$
& \begin{tabular}{c}
Numerical
\\
convergence
\\
rate
\end{tabular}
& \begin{tabular}{c}
Number of
\\
policy
\\
iterations
\end{tabular}
\\
\hline
32
& 1.337$\times 10^{-3}$
& 
& 6.604$\times 10^{-3}$
& 
& 5
\\
64
& 9.084$\times 10^{-4}$
& 0.56
& 3.304$\times 10^{-3}$
& 1.00
& 6
\\
128
& 6.940$\times 10^{-4}$
& 0.39
& 1.901$\times 10^{-3}$
& 0.80
& 7
\\
256
& 3.815$\times 10^{-4}$
& 0.86
& 9.335$\times 10^{-4}$
& 1.03
& 7
\\
512
& 1.998$\times 10^{-4}$
& 0.93
& 4.563$\times 10^{-4}$
& 1.03
& 9
\\
\hline
\end{tabular}
\end{center}
\smallskip
\caption{\label{tab:example3}
Numerical results for Example 3, where the exact solution is $\frac{1}{2}\max\left( \sqrt{x^2+y^2}- 0.1, 0 \right)^2$.
(1) Proposed mixed stencil scheme.
(2) Pure semi-Lagrangian wide stencil scheme. The errors $\|u-u_h\|$ by the proposed mixed stencil scheme are smaller than those by the pure wide stencil scheme.}
\end{table}

\underline{Example 3.}
Consider
\begin{equation*}
\renewcommand*{\arraystretch}{1}
\begin{array}{r}
f(x,y) = \max\left( 1- \dfrac{0.1}{\sqrt{x^2+y^2}}, 0 \right),
\qquad
g(x,y) = \dfrac{1}{2}( \sqrt{x^2+y^2} - 0.1 )^2,
\\
\overline{\Omega} = [-0.5,0.5]\times[-0.5,0.5].
\end{array}
\end{equation*}
The exact solution is given by
$
u(x,y) = \frac{1}{2}\max\left( \sqrt{x^2+y^2} - 0.1, 0 \right)^2
$.
This is a $C^1$ function where the singularity occurs at the ring $x^2+y^2=0.1^2$. First we consider the proposed mixed scheme. Semi-Lagrangian wide stencils need to be applied near the ring $x^2+y^2=0.1^2$. Figure \ref{fig:example3}(2,red-solid) and Table \ref{tab:example3}(1) show the numerical results. We note that the error reduction rates for the sequence of $N=32,64,\cdots,512$ do not look as regular as the previous examples. The reason is that wide stencil introduces interpolation error, which fluctuates as $N$ increases, despite converging towards 0. However, a clear error reduction, and thus convergence, can be observed.
For comparison, we also test the pure semi-Lagrangian wide stencil scheme, as shown in Figure \ref{fig:example3}(2,blue-dashed) and Table \ref{tab:example3}(2). Our proposed mixed scheme performs better than the pure wide stencil scheme, in the sense that the error $\|u-u_h\|$ is significantly smaller, and the convergence rate is faster.

\begin{figure}[t!]
\begin{center}
\includegraphics[scale=0.35]{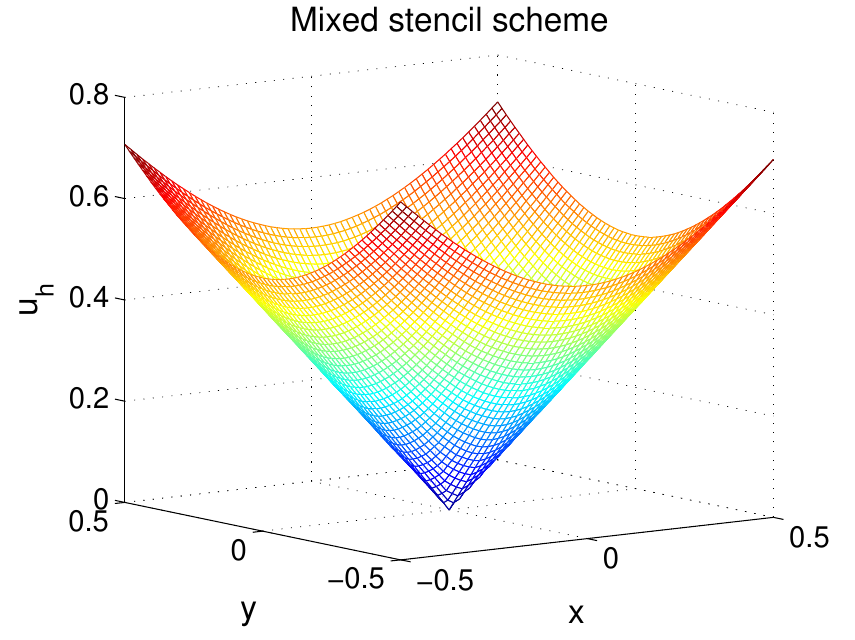}
\hspace{1cm}
\includegraphics[scale=0.35]{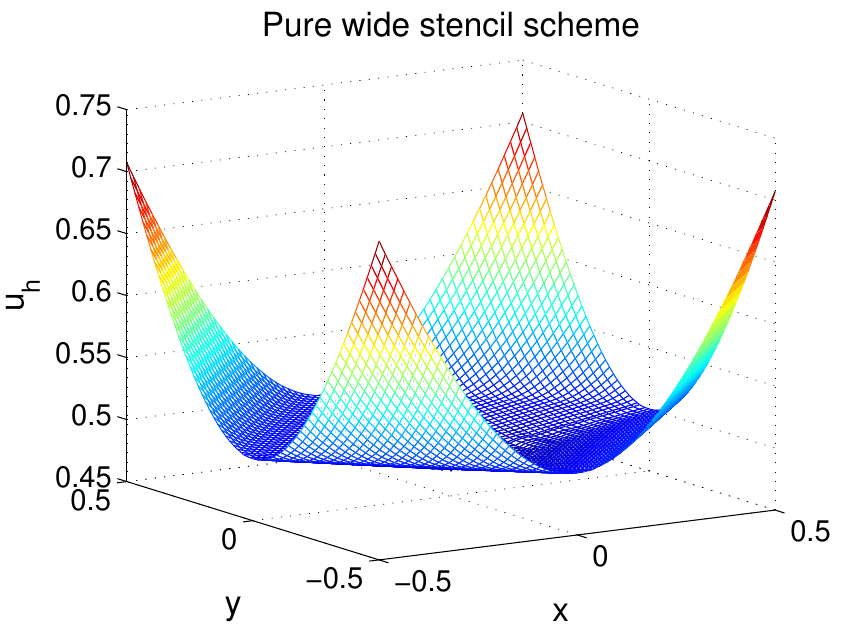}
\\
(1)
\hspace{5cm}
(2)
\\
\includegraphics[scale=0.4]{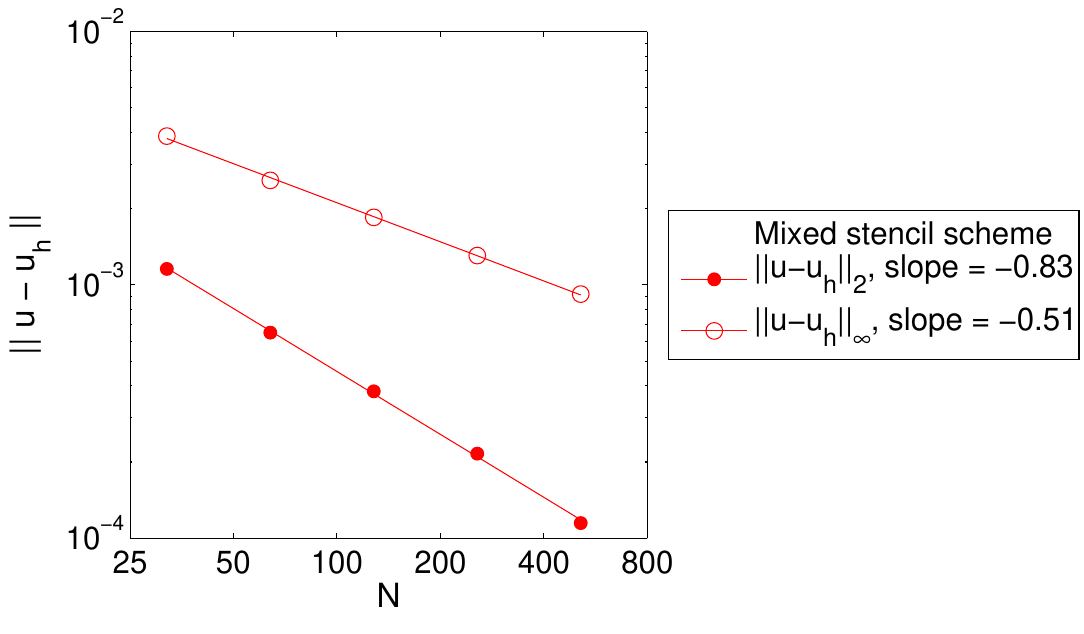}
\vspace{-2mm}
\\
(3)
\end{center}
\vspace{-2mm}
\caption{\label{fig:example4}
Numerical results of Example 4, where the exact solution is $u(x,y) = \sqrt{x^2+y^2}$.
(1) Numerical solution by the proposed mixed stencil scheme, which converges to the exact solution.
(2) Numerical solution by the pure semi-Lagrangian wide stencil scheme, which does not converge to the exact solution.
(3) Norms of the error $\|u-u_h\|$. The proposed mixed stencil scheme is used. The convergence rates, indicated by the slopes, are $O(h^{0.8})$ in $L_2$ norm and $O(h^{0.5})$ in $L_\infty$ norm, respectively.}
\end{figure}

\begin{table}[b!]
\footnotesize
\begin{center}
Proposed mixed stencil scheme
\begin{tabular}{|c||c|c||c|c||c|}
\hline
$N$
& $\|u-u_h\|_2$
& \begin{tabular}{c}
Numerical
\\
convergence
\\
rate
\end{tabular}
& $\|u-u_h\|_\infty$
& \begin{tabular}{c}
Numerical
\\
convergence
\\
rate
\end{tabular}
& \begin{tabular}{c}
Number of
\\
policy
\\
iterations
\end{tabular}
\\
\hline
32
& 1.156$\times 10^{-3}$
& 
& 3.868$\times 10^{-3}$
& 
& 9
\\
64
& 6.484$\times 10^{-4}$
& 0.83
& 2.583$\times 10^{-3}$
& 0.58
& 15
\\
128
& 3.803$\times 10^{-4}$
& 0.77
& 1.848$\times 10^{-3}$
& 0.48
& 17
\\
256
& 2.159$\times 10^{-4}$
& 0.82
& 1.305$\times 10^{-3}$
& 0.50
& 23
\\
512
& 1.148$\times 10^{-4}$
& 0.91
& 9.203$\times 10^{-4}$
& 0.50
& 27
\\
\hline
\end{tabular}
\end{center}
\smallskip
\caption{\label{tab:example4}
Numerical results of Example 4. The exact solution is $u(x,y) = \sqrt{x^2+y^2}$. The proposed mixed stencil scheme is used.}
\end{table}

\underline{Example 4.}
In practice, our numerical scheme can converge to not only viscosity solutions, but also a type of more general weak solutions, called Aleksandrov solutions \cite{gutierrez2012monge}. In this example, the corresponding $f$ is a delta function at the origin and is zero elsewhere:
\begin{equation*}
f(x,y) = \pi \delta(0,0),
\quad
g(x,y) = \sqrt{x^2+y^2},
\quad
\overline{\Omega} = [-0.5,0.5]\times[-0.5,0.5].
\end{equation*}
The exact solution
$
u(x,y)=\sqrt{x^2+y^2}
$
is an Aleksandrov solution. It is a $C^0$ function and is singular at the origin. Figure \ref{fig:example4}(1) shows that our proposed mixed scheme converges to the cone-shaped Aleksandrov solution. Conversely, Figure \ref{fig:example4}(2) shows that the pure semi-Lagrangian wide stencil scheme in \cite{feng2016convergent} does not give the cone-shaped Aleksandrov solution. Indeed, there is no theoretical proof that the pure wide stencil scheme can converge to Aleksandrov solutions. Figure \ref{fig:example4}(3) and Table \ref{tab:example4} report the convergence results by the proposed mixed scheme. The orders of convergence are close to 0.8 and 0.5 in $L_2$ and $L_\infty$ norms respectively.

\underline{Example 5.}
In order to make a case for designing a monotone numerical scheme that converges to the viscosity solution (which is convex), we show explicitly that non-monotone numerical scheme may converge to a non-viscosity solution (which may be non-convex). More analysis on this issue can be found in \cite{froese2011convergent,benamou2010two}. We consider
\begin{equation*}
f(x,y) = 1,
\qquad
g(x,y) = 0,
\qquad
\overline{\Omega} = [-0.5,0.5]\times[-0.5,0.5].
\end{equation*}
For this example, the exact solution $u$ is not smooth near $\partial\Omega$ \cite{benamou2010two}. Since a closed-form expression for $u$ is not available, we follow \cite{benamou2010two} and study the convergence behavior of $u_h$ towards $u$ by checking the values of $u_h(0,0)$ as $h\to 0$. The numerical solution using our monotone mixed scheme converges to the convex viscosity solution as $h\to 0$; see Figure \ref{fig:example5} and Table \ref{tab:example5}.
Alternatively, we consider a possible non-monotone discretization for $u_{xx}u_{yy}-u_{xy}^2=f$, which is the direct application of the standard central differencing on $u_{xx}$, $u_{yy}$ and the standard 4-point central differencing on $u_{xy}$. In our numerical experiment, the numerical solution under the non-monotone discretization converges to a concave function as $h\to 0$. We note that \cite{benamou2010two} has considered the same example using non-monotone discretization, and obtained another non-viscosity solution that is non-convex near $\partial\Omega$.

\begin{figure}[t!]
\begin{center}
\includegraphics[scale=0.35]{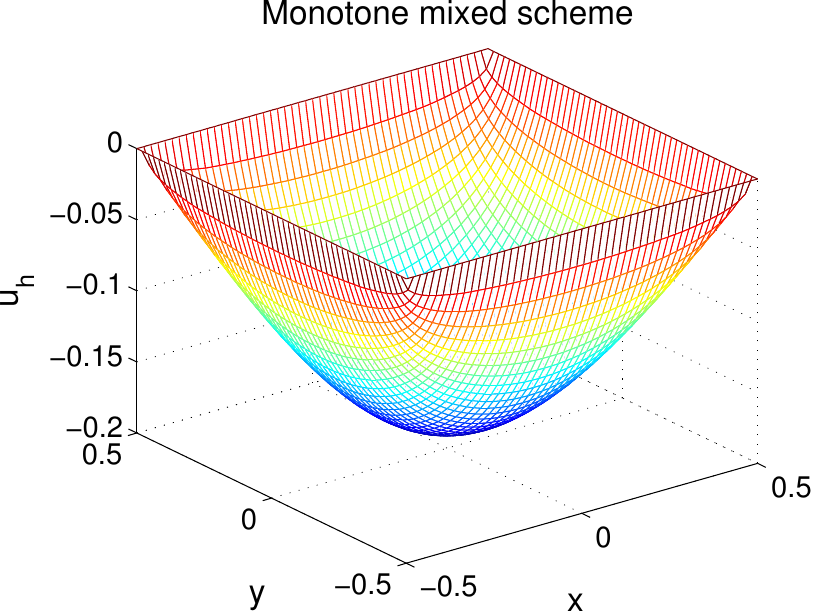}
\hspace{1cm}
\includegraphics[scale=0.35]{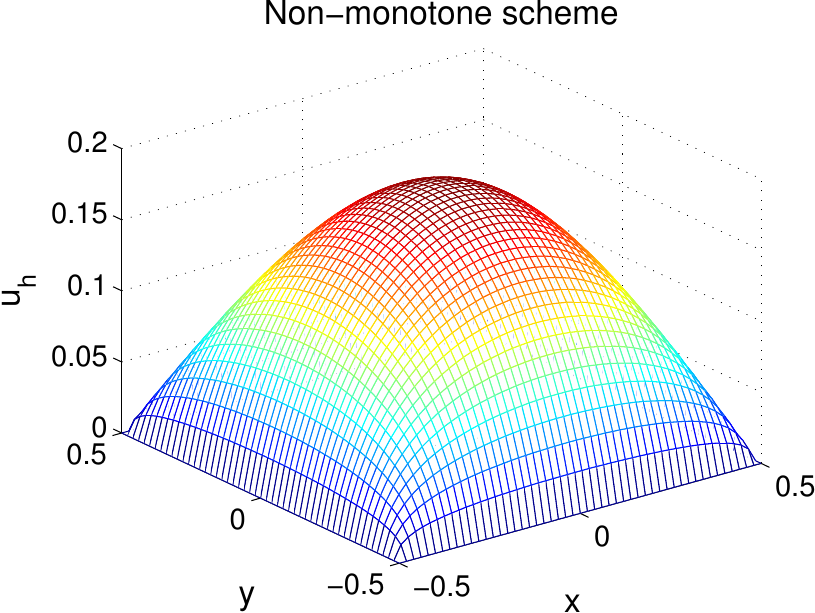}
\\
(1)
\hspace{5cm}
(2)
\end{center}
\caption{\label{fig:example5}
Example 5:
\,
(1) The solution given by the monotone mixed scheme, which is convex and is convergent in the viscosity sense.
\,
(2) One possible solution given by a non-monotone scheme, which is concave and is not a viscosity solution.
}
\end{figure}

\begin{table}[t!]
\footnotesize
\begin{center}
\begin{tabular}{|c|c|c|}
\hline
$N$
&
\begin{tabular}{c}
$u_h(0,0)$ by
\\
monotone scheme
\end{tabular}
&
\begin{tabular}{c}
$u_h(0,0)$ by
\\
non-monotone scheme
\end{tabular}
\\
\hline
32
& -0.18380
& 0.18063
\\
64
& -0.18444
& 0.18312
\\
128
& -0.18461
& 0.18436
\\
256
& -0.18485
& 0.18499
\\
512
& -0.18507
& 0.18530
\\
\hline
\end{tabular}
\end{center}
\smallskip
\caption{\label{tab:example5}
Example 5: (1) The minimum values of the numerical solutions $u_{min}$ given by the monotone mixed scheme, which provides an evidence that the numerical solution converges to a convex solution. (2) The maximum values of the numerical solutions $u_{max}$ given by a non-monotone scheme, which provides an evidence that the numerical solution converges to a non-convex solution. }
\end{table}


\section{Conclusion}
\label{sec:conclusion}

In this paper, we convert the Monge-Amp\`ere equation into the equivalent HJB equation, and propose a mixed finite difference discretization for solving the equivalent HJB equation. The discretization satisfies consistency, stability, monotonicity and strong comparison principle, and thus convergent to the viscosity solution of the Monge-Amp\`ere equation. Our proposed mixed scheme significantly improves the accuracy over the pure semi-Lagrangian scheme in \cite{feng2016convergent}. More specifically, the proposed mixed scheme yields a smaller discretization error $\|u - u_h\|$.
Furthermore, if the standard 7-point stencils can be applied on the entire computational domain monotonically, then our proposed mixed stencil scheme can improve the convergence rate to $O(h^2)$.

Our mixed scheme can be potentially extended to higher dimensional cases. Assuming that the dimension is $d$, the idea is to parametrize the control of the HJB equation (\ref{eq:convertHJB}), namely to parametrize $A(\mathbf{x})=Q(\mathbf{x})\Lambda(\mathbf{x}) Q(\mathbf{x})^T$, where $Q(\mathbf{x})\in SO(d)$ and $\Lambda(\mathbf{x})$ is a trace-1 non-negative diagonal matrix. Then the standard 7-point stencil discretization can be applied if $A(\mathbf{x})$ is weakly diagonal dominant, and the semi-Lagrangian wide stencil discretization is applied otherwise. We leave this topic as a future work.

\bibliographystyle{spmpsci.bst}
\bibliography{Reference}

\end{document}